\numberwithin{equation}{section}
\newtheorem{theorem}{Theorem}[section]
\theoremstyle{plain}
\newtheorem{proposition}[theorem]{Proposition}
\newtheorem{lemma}[theorem]{Lemma}
\newtheorem{corollary}[theorem]{Corollary}
\newtheorem{remark}[theorem]{Remark}
\newtheorem{question}{Question}
\def\eps{\varepsilon}
\def\be{\begin{equation}}
	\def\ee{\end{equation}}
\def\eps{\varepsilon}
\def\pa{\partial}
\begin{document}

	\title[]{Heat kernel estimate on weighted Riemannian manifolds under lower $N$-Ricci curvature bounds with $\epsilon$-range and it's application}
	\author{Wen-Qi Li, Zhikai Zhang}
	\address{Wen-Qi Li and Zhikai Zhang, School of Mathematical Sciences, Key Laboratory of MEA (Ministry of Education) \& Shanghai Key Laboratory of PMMP, East China Normal University, Shanghai 200241, China}
	\email{51255500054@stu.ecnu.edu.cn, 51265500023@stu.ecnu.edu.cn}
	
	\date{\today}
	
	\begin{abstract}
		In this paper, we establish a parabolic Harnack inequality for positive solutions of the $\phi$-heat equation and prove Gaussian upper and lower bounds for the $\phi$-heat kernel on weighted Riemannian manifolds under  lower $N$-Ricci curvature bound with $\varepsilon$-range. Building on these results, we demonstrate:
		\begin{itemize}
			\item The $L^1_\phi$-Liouville theorem for $\phi$-subharmonic functions
			\item $L^1_\phi$-uniqueness property for solutions of the $\phi$-heat equation
			\item Lower bounds for eigenvalues of the weighted Laplacian $\Delta_\phi$
		\end{itemize}
		Furthermore, leveraging the Gaussian upper bound of the weighted heat kernel, we construct a Li-Yau-type gradient estimate for the positive solution of weighted heat equation under a weighted $L^p(\mu)$-norm constraint on $|\nabla\phi|^2$.
	\end{abstract}
	\maketitle
	\section{Introduction}
In recent years, the analysis of lower Ricci curvature bounds has become a cornerstone in geometric analysis, profoundly influencing our understanding of Riemannian manifolds. These curvature conditions not only govern fundamental geometric and topological properties \cite{Li}, but also play pivotal roles in diverse areas ranging from heat equation behavior to spectral theory. A particularly significant application lies in characterizing the volume growth of geodesic balls, leading to crucial geometric inequalities such as the Sobolev inequality.

When extending these considerations to weighted Riemannian manifolds (alternatively called smooth metric measure spaces), identifying appropriate geometric invariants becomes essential. The seminal work of Bakry and \'Emery \cite{BakryEmery1985} introduced the \textit{Bakry-\'Emery Ricci curvature tensor} $Ric_{\phi}^{m}$, where $m\geq n$ represents the effective dimension. This curvature quantity generalizes classical Ricci curvature while preserving key comparison geometry results (see \cite{BakryQian,Song2023,Wu2024,Wei2009,WW1,WW2,ZZ} for developments). Notably, manifolds with lower Bakry-\'Emery Ricci curvature bounds exhibit deep connections with:
\begin{itemize}
    \item Singularity formation in Ricci flow \cite{Hamilton1995,Pel1}
    \item Theory of metric measure spaces satisfies curvature dimension condition  \cite{LottVillani2009,Sturm1,Sturm2}
    \item Geometric analysis of stationary black holes \cite{Galloway2021,ZhangZhu2019}
\end{itemize}
This broad spectrum of applications underscores the fundamental nature of Bakry-\'Emery Ricci curvature in modern geometric analysis.

The validity of fundamental comparison theorems — including the Bishop-Gromov volume comparison and Laplacian comparison — under Bakry-Émery Ricci curvature bounds exhibits dimensional sensitivity: these results hold for effective dimensions $m \geq n$ but fail when $m \leq 1$. This limitation motivated three generations of curvature bound refinements:

\begin{enumerate}
    \item \textbf{Exponential Weighting} (Wylie-Yeroshkin \cite{YW}): For $m=1$, the variable curvature bound
    \begin{equation}\label{eq:WY-bound}
        Ric_{\phi}^{1} \geq Ke^{-\frac{4\phi}{n-1}}
    \end{equation}
    restores comparison theorems through exponential damping of the weight function $\phi$.
    
    \item \textbf{Negative Dimension Extension} (Kuwae-Li \cite{KL}): Generalized to $m \in (-\infty,0)$ via
    \begin{equation}\label{eq:KL-bound}
        Ric_{\phi}^{m} \geq Ke^{\frac{4\phi}{m-n}},
    \end{equation}
    leveraging reciprocal exponentiation for negative effective dimensions.
    
    \item \textbf{$\epsilon$-range case} (Lu-Minguzzi-Ohta \cite{LuMingOhta}): Introduced the $\varepsilon$-parametrized bound
    \begin{equation}\label{eq:LMO-bound}
        Ric_{\phi}^{m} \geq Ke^{\frac{4(\varepsilon-1)\phi}{m-n}},
    \end{equation}
   	which incorporates an additional parameter $\eps$ in an appropriate range, depending on $m\in(-\infty,1]\cup [n,+\infty] $, called the $\eps-$range. This framework subsumes both \eqref{eq:WY-bound} and \eqref{eq:KL-bound} through strategic $\varepsilon$-selection.
\end{enumerate}

 For further context, see \cite{LuMingOhta}, which discusses prior work on singularity theorems in Lorentz-Finsler geometry. This not only generalizes \cite{YW} and \cite{KL}, but also unifies both constant and variable curvature bounds by appropriately selecting $\eps$. For further investigations on the $\eps-$range, we refer to \cite{Kuwae2021,Kuwae2022,Kuwae2023}.

 In this paper, we investigate weighted heat kernel estimates on weighted Riemannian manifolds under curvature constraints characterized by lower $N$-Ricci bounds with $\varepsilon$-range, and explore their geometric applications. The study of heat kernel estimates under Ricci curvature conditions has a well-established history, with fundamental contributions including Gaussian bounds, Harnack inequalities, and mean value properties. We refer to the comprehensive treatments in \cite{Davies1989,G1,sal2,WW1,WW2} for historical context and classical results.

 Our first main result is the local $\phi-$heat kernel estimate Theorem \ref{hkestm}. The idea of the proof is standard. A key ingredient is a relative volume comparison theorem proved by Lu-Minguzzi-Otha \cite{Lu-Ohta}. Incorporating the local Sobolev inequality established by Y. Fujitani \cite{Fujitani2}, we can derive the parabolic mean value inequality \eqref{pbmvp} and the parabolic harnack inequality \eqref{pbhnk} by following the framework outlined in \cite{WW1,WW2}(for the context of Riemannian manifolds, one can refer to \cite{sal1, sal2}). The subsequent application of the mean value property, alongside Davies’ double integral estimate, will lead us to an upper bound for the $\phi-$heat kernel. Then with the Li-Yau type harnack inequality \eqref{LYhnk} and the upper bound, we can derive the lower bound estimate of the $\phi-$heat kernel.

\begin{theorem}[Weighted Heat Kernel Estimates]\label{hkestm}
Let $(M^n, g, \mu)$ be a weighted Riemannian manifold with a lower \(N\)-Ricci curvature bound and \(\varepsilon\)-range. Assume \(\phi\) satisfies 
		$$0 < a \le e^{\frac{2(1-\eps)\phi(x)}{n-1}} \le b.$$
		For all \(o \in M^n\), \(R > 0\), and \(\varepsilon > 0\), the \(\phi\)-heat kernel satisfies
		\begin{equation}
			\begin{aligned}
				\frac{C(\eps)E_{2}^{'} \exp\left(2D_2 \sqrt{K_{\eps}(q,10\sqrt{t})} \sqrt{t}\right)}{(V_{x}(\sqrt{t}))^{1/2} (V_{y}(\sqrt{t}))^{1/2}} &\exp\left(-\frac{d^2(x,y)}{4(1+\varepsilon)t}\right) \\
				& \ge H_{\phi}(x,y,t) \ge C^{'}_{12} \exp \left( -C^{'}_{13} t - \frac{C^{'}_{14}d^2(x,y)}{ t} \right) V_{x}^{-1}(\sqrt{t}),
			\end{aligned}
		\end{equation}
		where \(E_{2}^{'}, D_{2}\) are constants depending on \(a, b, n, \nu\), $C_{i}^{'}$ is a constant depends on $n,\ \nu,a,b,c,K$ and \(\lim_{\varepsilon \to 0} C(\varepsilon) = +\infty\).
\end{theorem}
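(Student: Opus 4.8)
The plan is to obtain both inequalities by first establishing on-diagonal bounds and then propagating them in the space variables, following exactly the route indicated after the statement. Throughout, write $\Delta_\phi=\Delta-\langle\nabla\phi,\nabla\,\cdot\,\rangle$ for the weighted Laplacian, which is self-adjoint on $L^2(\mu)$, $d\mu=e^{-\phi}\,d\mathrm{vol}_g$, and recall that the standing hypothesis $a\le e^{2(1-\eps)\phi/(n-1)}\le b$ makes the $\phi$-rescaled distance appearing in the Lu--Minguzzi--Ohta comparison theorem comparable, up to constants depending only on $a,b$, to the Riemannian distance $d$; this is what lets every comparison statement be read directly in terms of $d$ and of the weighted balls $V_{x}(r):=\mu(B(x,r))$.

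\emph{Upper bound.} I would first feed the relative volume comparison of \cite{Lu-Ohta} and the local $L^2$-Sobolev inequality of \cite{Fujitani2} into a Moser iteration on parabolic cylinders $Q=B(x,r)\times(s-r^{2},s)$, obtaining the parabolic mean value inequality \eqref{pbmvp}: every nonnegative subsolution $u$ of $\partial_{t}u=\Delta_\phi u$ on $Q$ satisfies $\sup_{Q/2}u\lesssim(\mu(B(x,r))\,r^{2})^{-1}\iint_{Q}u\,d\mu\,dt$, with implicit constant controlled by $n,\nu,a,b$ and by $r^{2}K_{\eps}$. Applying this to $y\mapsto H_{\phi}(x,y,t/2)$ and using the reproducing identity $H_{\phi}(x,y,t)=\int_{M}H_{\phi}(x,z,t/2)H_{\phi}(z,y,t/2)\,d\mu(z)$ together with $\int_{M}H_{\phi}(x,z,t/2)^{2}\,d\mu(z)=H_{\phi}(x,x,t)$ yields the on-diagonal bound $H_{\phi}(x,x,t)\lesssim V_{x}(\sqrt t)^{-1}$, which Cauchy--Schwarz in the same identity upgrades to $H_{\phi}(x,y,t)\lesssim(V_{x}(\sqrt t)V_{y}(\sqrt t))^{-1/2}$; this factor, with the mean value constant, is the $C(\eps)E_{2}'$ term, and the exponential $\exp(2D_{2}\sqrt{K_{\eps}(q,10\sqrt t)}\sqrt t)$ is the correction carried along from the volume comparison when $K_{\eps}>0$. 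To insert the Gaussian factor I would invoke Davies' double integral estimate at scale $\sqrt t$ — of the form $\iint_{B(x,\sqrt t)\times B(y,\sqrt t)}H_{\phi}\,d\mu\,d\mu\le(V_{x}(\sqrt t)V_{y}(\sqrt t))^{1/2}\exp(-d(B(x,\sqrt t),B(y,\sqrt t))^{2}/(4t))$, which carries the sharp constant $1/4$ — and combine it with the mean value inequality; since the distance between the two balls equals $d(x,y)$ only up to an additive $O(\sqrt t)$, re-expressing the exponent in terms of $d(x,y)$ costs a factor that degenerates when $d(x,y)\sim\sqrt t$, and this is precisely what turns $1/4$ into $1/(4(1+\eps))$ at the price of a constant $C(\eps)$ with $C(\eps)\to\infty$ as $\eps\to0$.

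\emph{Lower bound.} Here I would first note that the $\eps$-range curvature condition already forces the volume growth of $\phi$-balls needed for stochastic completeness (Grigor'yan's criterion applied to the model-volume bound from the comparison theorem), so $\int_{M}H_{\phi}(x,y,t)\,d\mu(y)=1$. Splitting this integral over $B(x,A\sqrt t)$ and its complement and using the upper bound just proved to make the exterior part $<1/2$ for $A$ large — with volume doubling used to compare $V_{x}(A\sqrt t)$ and $V_{x}(\sqrt t)$ — gives the on-diagonal lower bound $H_{\phi}(x,x,t)\gtrsim V_{x}(\sqrt t)^{-1}$. Finally I would chain the Li--Yau-type Harnack inequality \eqref{LYhnk} from $(x,t/2)$ to $(y,t)$ (iterating it along a geodesic chain if $d(x,y)$ is large) to transport the on-diagonal lower bound to $H_{\phi}(x,y,t)\ge C'_{12}\exp(-C'_{13}t-C'_{14}d^{2}(x,y)/t)\,V_{x}(\sqrt t)^{-1}$.

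\emph{Main obstacle.} The chief difficulty is that $\phi$ enters both through the measure $\mu=e^{-\phi}\,\mathrm{vol}_g$ and through the curvature bound $\Ric_\phi^{m}\ge Ke^{4(\eps-1)\phi/(m-n)}$, so the comparison geometry is naturally phrased in the $\phi$-rescaled distance and the classical Moser--Davies machinery is not available off the shelf; the two-sided bound on $e^{2(1-\eps)\phi/(n-1)}$ is exactly what decouples these and supplies uniform volume doubling together with a usable Sobolev inequality. All the quantitative dependence on $n,\nu,a,b,c,K,\eps$ must then be propagated carefully through the iteration — in particular the mechanism producing $C(\eps)\to\infty$ and the precise form of the correction factors $E_{2}',D_{2},C'_{i}$ — and this bookkeeping, rather than any single conceptual step, is the most delicate part of the argument.
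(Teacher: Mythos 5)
Your upper-bound argument is essentially the paper's: the parabolic mean value inequality \eqref{pbmvp} applied in each space variable at scale $\sqrt t$, Davies' double integral estimate (Lemma \ref{ddie}) on the balls $B_x(\sqrt t)$, $B_y(\sqrt t)$, and then the elementary replacement of $d(B_1,B_2)$ by $d(x,y)$, which is exactly the paper's mechanism converting the exponent $1/4$ into $1/(4(1+\eps))$ at the cost of $C(\eps)\to\infty$; your intermediate detour through the on-diagonal bound and the reproducing identity is only a reorganization of the same computation. For the lower bound the final step coincides (transport the on-diagonal bound to $(y,t)$ by the chained Li--Yau-type Harnack inequality \eqref{LYhnk} applied to $u(y,t)=H_\phi(x,y,t)$ with $s=t/2$), but your route to the on-diagonal lower bound genuinely differs: you use stochastic completeness (Grigor'yan's criterion, which the paper only invokes later, in Section 5) together with the just-proved Gaussian upper bound to show the heat kernel mass outside $B_x(A\sqrt t)$ is less than $1/2$, then Cauchy--Schwarz and doubling; the paper instead applies the semigroup to a cutoff function equal to $1$ on $B_x(\sqrt t)$, uses the local Harnack inequality to compare $u(x,0)=1$ with $u(x,t/2)$, and then Cauchy--Schwarz plus Corollary \ref{col1}, which avoids both stochastic completeness and any integration of the Gaussian tail. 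Your variant works, but note the point that needs care in this setting: the doubling constant from Corollary \ref{col1} grows like $\exp(\sqrt{K_1/c}\,r/a)$ in the radius, so when summing over annuli the Gaussian must beat this exponential growth and the radius $A\sqrt t$ must be taken depending on $\sqrt{K_1 t}$; this produces an extra factor of the form $e^{Ct}$, which is admissible only because the stated lower bound already contains $\exp(-C'_{13}t)$ — in the paper this $t$-dependence instead comes out of the Harnack constant $\exp(C'_{11}(\sqrt{D'_5}K_\eps t+1))$. With that bookkeeping made explicit (including the Cauchy--Schwarz step $\bigl(\int_{B_x(A\sqrt t)}H_\phi\,d\mu\bigr)^2\le V_x(A\sqrt t)\,H_\phi(x,x,2t)$, which you left implicit), your proposal is a correct alternative proof of the same statement.
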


	With the help of the heat kernel upper bound, one can prove the $L^{1}_{\phi}-$Liouville property for non-negative subharmonic functions by following the argument in \cite{Li1984,LiSchoen1984}.
	While \cite{Fujitani2} derived several Liouville theorems by examining the lower bounds of volume growth, those results required specific assumptions on the function $\phi$.  Here, by leveraging the $\phi-$heat kernel estimate, we eliminate these assumptions and successfully derive the $L^{1}_{\phi}-$Liouville Theorem.
	\begin{theorem}\label{liouville}
		Let $(M^n, g, \mu)$ be a complete noncompact weighted Riemannian manifold under lower \(N\)-Ricci curvature bound with \(\varepsilon\)-range. Assume \(\phi\) satisfies 
		$$0 < a \le e^{\frac{2(1-\eps)\phi(x)}{n-1}} \le b.$$
		Then any non-negative $L^{1}(\mu)$ integrable $\phi-$subharmonic function on $M^n$ must be identically constant.  In
		particular, any $L^1(\mu)-$integrable harmonic function must be identically constant.
	\end{theorem}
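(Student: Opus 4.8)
The plan is to follow the classical Li--Schoen strategy for $L^1$-Liouville theorems, adapted to the weighted setting via the $\phi$-heat kernel estimate of Theorem \ref{hkestm}. Let $u$ be a non-negative $L^1(\mu)$ function on $M^n$ satisfying $\Delta_\phi u \ge 0$ in the distributional (weak) sense. First I would introduce the $\phi$-heat semigroup $P_t u(x) = \int_{M^n} H_\phi(x,y,t)\, u(y)\, d\mu(y)$; since $u \in L^1(\mu)$ and, by the Gaussian upper bound in Theorem \ref{hkestm}, $H_\phi(x,\cdot,t) \le C\, V_x(\sqrt t)^{-1/2} V_y(\sqrt t)^{-1/2}\exp(\cdots)$ is bounded for fixed $x,t$ and stochastically complete (the lower bound gives conservativeness $\int H_\phi\, d\mu = 1$, or this follows from the volume comparison directly), the integral $P_t u$ is finite and well-defined for every $x$ and $t>0$. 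The key analytic point is the \emph{monotonicity} $t \mapsto P_t u(x)$ is non-decreasing: this is the semigroup incarnation of $\Delta_\phi u \ge 0$, and is proved by differentiating $P_s(P_{t-s} u)$ in $s$ (using $\partial_t P_t = \Delta_\phi P_t$ and self-adjointness of $\Delta_\phi$ with respect to $\mu$) together with the mean value / approximation argument of Li \cite{Li1984} to justify integration by parts for merely $L^1$ subsolutions. Combined with $\lim_{t\to 0^+} P_t u = u$ (in $L^1_{\mathrm{loc}}$, hence along a subsequence a.e.), this yields $u(x) \le P_t u(x)$ for a.e.\ $x$ and all $t>0$.

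Next I would show $P_t u(x) \to $ a constant as $t \to \infty$, and in fact that the $L^1$-norm is what controls everything. Using the Gaussian upper bound, $P_t u(x) \le \sup_{y} \big( C(\eps) E_2' \exp(2D_2\sqrt{K_\eps(q,10\sqrt t)}\sqrt t)\, V_x(\sqrt t)^{-1/2} V_y(\sqrt t)^{-1/2} \exp(-d^2(x,y)/(4(1+\eps)t)) \big) \cdot \|u\|_{L^1(\mu)}$; the relative volume comparison of Lu--Minguzzi--Ohta \cite{Lu-Ohta} used to prove Theorem \ref{hkestm} gives a lower bound $V_y(\sqrt t) \ge c\, V_x(\sqrt t)\, e^{-C d(x,y)/\sqrt t}$, which absorbs into the Gaussian factor, so that $P_t u(x) \le C_{x}\, \|u\|_{L^1(\mu)}\, V_x(\sqrt t)^{-1}\, \psi(t)$ for a controlled $\psi$. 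Since $M^n$ is noncompact and the volume comparison forces $V_x(r) \to \infty$ as $r \to \infty$ (infinite total volume is a consequence of the one-sided curvature bound with $\eps$-range in the noncompact case — I would record this as a lemma from \cite{Lu-Ohta}, using that the volume of large balls grows at least like the model), we get $P_t u(x) \to 0$, contradicting $0 \le u \le P_t u$ with $\|u\|_{L^1} > 0$ unless $u \equiv 0$. Wait — that is too strong; the correct conclusion is that $u$ must be \emph{constant}, so I should instead argue: the monotone limit $h(x) := \lim_{t\to\infty} P_t u(x)$ exists, is a bounded $\phi$-harmonic function (it is $\phi$-harmonic by the semigroup property and elliptic regularity), and $h \in L^1(\mu)$ by Fatou together with $\|P_t u\|_{L^1} \le \|u\|_{L^1}$; an $L^1(\mu)$ bounded $\phi$-harmonic function on a space of infinite $\mu$-measure must vanish, hence $h \equiv 0$, forcing $0 \le u \le h = 0$. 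This proves $u \equiv 0$ when $u$ is merely non-negative $\phi$-subharmonic; for the harmonic case one applies this to $u^+$ and $u^-$ (each is $\phi$-subharmonic) after noting $|u| \in L^1(\mu)$.

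The main obstacle I anticipate is the rigorous justification of the inequality $u \le P_t u$ for a function that is only assumed to be $L^1(\mu)$ and only weakly $\phi$-subharmonic — one cannot naively integrate by parts, and one must control boundary terms at spatial infinity. Here the heat kernel upper bound of Theorem \ref{hkestm} does the essential work: following Li \cite{Li1984} and Li--Schoen \cite{LiSchoen1984}, one localizes with cutoff functions $\chi_R$ supported on $B(o,2R)$, writes $\frac{d}{ds}\int_{M} H_\phi(x,y,t-s)\, \chi_R(y)\, u(y)\, d\mu(y)$, integrates by parts picking up a term supported on the annulus $B(o,2R)\setminus B(o,R)$, and uses the Gaussian decay of $H_\phi$ and its gradient (the latter obtained from the Harnack inequality \eqref{pbhnk} via standard local gradient estimates) together with $u \in L^1(\mu)$ to send the error to $0$ as $R\to\infty$. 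A secondary technical point is establishing stochastic completeness $\int_{M^n} H_\phi(x,y,t)\, d\mu(y) = 1$; this follows from the two-sided bound in Theorem \ref{hkestm} combined with a standard volume-growth criterion (Grigor'yan-type), since the volume comparison from \cite{Lu-Ohta} yields at most exponential volume growth. I would isolate these as two preliminary lemmas and then the three-step argument above closes the proof.
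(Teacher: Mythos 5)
Your first two ingredients --- stochastic completeness via Grigor'yan's volume-growth criterion, and the justification of integration by parts / $u\le P_t u$ for an $L^1(\mu)$ subsolution by cutting off on annuli and killing the boundary terms with the Gaussian decay of $H_\phi$ and its gradient --- are exactly the paper's Corollary \ref{sccp} and Proposition \ref{prop1}, so that part of the plan is sound. The gap is in your endgame. You conclude $u\equiv 0$ by asserting that the curvature bound with $\eps$-range forces $V_x(r)\to\infty$ (infinite total $\mu$-measure) on a complete noncompact manifold, and then that $P_t u\to 0$ (or that a bounded $L^1(\mu)$ $\phi$-harmonic limit must vanish). This claim is false in the generality of the theorem: $K$ is an arbitrary real number, and e.g.\ a complete noncompact finite-volume hyperbolic manifold with $\phi$ constant satisfies all hypotheses while having $\mu(M)<\infty$; there, nonzero constants are non-negative $L^1(\mu)$ $\phi$-subharmonic functions, so the correct (and stated) conclusion is ``constant'', not ``zero''. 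The Bishop--Gromov-type comparison of Lu--Minguzzi--Ohta only bounds volume ratios from above and gives no lower volume growth; indeed the paper explicitly notes that Fujitani's earlier Liouville results needed extra assumptions tied to volume growth, and the point of this theorem is to avoid them. So your step ``infinite volume $\Rightarrow h\equiv 0$'' is both unjustified and incompatible with the statement you are proving. (The auxiliary claim that a bounded $L^1(\mu)$ $\phi$-harmonic function on an infinite-volume space must vanish is also left unproved.)

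The paper closes the argument without any volume lower bound: setting $h(x,t)=\int_M H_\phi(x,y,t)h(y)\,d\mu(y)$, Proposition \ref{prop1} gives $\partial_t h(x,t)=\int_M H_\phi\,\Delta_\phi h\,d\mu\ge 0$, while stochastic completeness plus Fubini gives $\int_M h(x,t)\,d\mu(x)=\int_M h\,d\mu$ for all $t$; a pointwise nondecreasing family with constant integral must satisfy $h(x,t)=h(x)$, forcing $\Delta_\phi h=0$. Then the truncation trick of Li: for any $A>0$, $u_A=\min\{h,A\}$ is $L^1(\mu)$, non-negative and $\phi$-superharmonic, and the same argument yields $\Delta_\phi u_A=0$; regularity of $\phi$-harmonic functions rules this out unless $h\equiv u_A$ or $u_A\equiv A$, and since $A$ is arbitrary $h$ is constant. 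If you replace your ``infinite volume'' paragraph with this monotonicity-plus-conservation argument and the $\min\{h,A\}$ step, your proposal becomes essentially the paper's proof.
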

 By the $L^{1}_\phi-$Liouville Theorem, following the arguments of Li in \cite{Li1984}, we can prove a uniqueness Theorem \ref{L1uniq} for $L^{1}_{\phi}-$solution of the $\phi-$heat equation. Another application of the weighted heat kernel estimate is that one can get Li-Yau type \cite{LiYau1986} lower bound estimates for the eigenvalues of $\Delta_\phi$ as Theorem \ref{lbeg} and Theorem \ref{lbeg2}.
	
Finally, building upon the methodology developed in \cite{ZhangZhu2018}, we establish a Li-Yau-type gradient estimate for positive solutions of the weighted heat equation under a weighted $L^p(\mu)$-norm constraint on $|\nabla\phi|^2$ as follows.
\begin{theorem}\label{LYGEWHE}
	Let $(M^n, g, \mu)$ be a compact weighted Riemannian manifold satisfying a lower $N$-Ricci curvature bound in the $\varepsilon$-range. Assume the potential function $\phi$ satisfies:
	\begin{equation}
		0 < a \leqslant e^{\frac{2(1-\varepsilon)\phi(x)}{n-1}} \leqslant b
	\end{equation}
	with the gradient bound $\|\nabla\phi\|_{L^p(\mu)} \leq V$ for some $V > 0$ , $\alpha >1$ and $p> n$. Then for any positive solution $u$ of the weighted heat equation, the following Li-Yau-type estimate holds:
	\begin{equation}
		 \underline{J}(t)\frac{|\nabla u|^2}{u^2} -\alpha \frac{\partial_t u}{u} \leq \frac{2n+1}{2t\underline{J}(t)},
	\end{equation}
where
\begin{equation}
	\underline{J}(t) = 2^{-\frac{1}{\tau-1}} e^{-(\tau-1)^{\frac{n}{(2p-n)}} (4 C_{29} \hat{C}(t)^{\frac{1}{p}})^{\frac{2p}{2p-n}} t}
\end{equation} 
and
$$
\begin{aligned}
	C_{29} &= K \| e^{\frac{4(\varepsilon-1) \varphi(x)}{n-1}} \|_p + C_{28} \| |\nabla \varphi|^2 \|_p \\
	\hat{C}(t) &= C(\varepsilon) E_2^{'} \exp \left(2 D_2 \sqrt{K_\varepsilon(q,10\sqrt{t})} \sqrt{t}\right)
\end{aligned}
$$
\end{theorem}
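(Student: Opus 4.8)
The plan is to follow the Zhang–Zhu scheme for Li-Yau estimates under integral curvature bounds, adapted to the weighted setting with the $\varepsilon$-range hypothesis. First I would set up the standard Bochner/Li-Yau machinery for the weighted heat equation: writing $f=\log u$ and $F = t(|\nabla f|^2 - \alpha \partial_t f)$ (with $\alpha>1$ a fixed constant), one computes the evolution inequality for $F$ via the weighted Bochner formula $\Delta_\phi |\nabla f|^2 = 2|\Hess f|^2 + 2\langle \nabla f, \nabla \Delta_\phi f\rangle + 2\Ric_\phi(\nabla f,\nabla f)$. The curvature term is where the $\varepsilon$-range enters: the lower $N$-Ricci bound with $\varepsilon$-range gives $\Ric_\phi^N \geq K e^{4(\varepsilon-1)\phi/(n-1)}$-type control, so the "bad" contribution in the Bochner inequality is governed by $K e^{4(\varepsilon-1)\phi/(n-1)}$ plus a term involving $|\nabla\phi|^2$ (coming from relating $\Ric_\phi^N$ to $\Ric_\phi^\infty$ and to the genuine Bakry-Émery tensor). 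This is exactly why the auxiliary quantity $C_{29} = K\|e^{4(\varepsilon-1)\phi/(n-1)}\|_p + C_{28}\||\nabla\phi|^2\|_p$ appears.

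Second, the heart of the argument is to absorb the integral curvature error using the Gaussian upper bound for the $\phi$-heat kernel from Theorem \ref{hkestm}. The strategy of Zhang–Zhu is: instead of a pointwise curvature bound, one only controls $\|(\text{curvature error})\|_{L^p(\mu)}$, and one uses the heat semigroup smoothing — precisely, $H_\phi(x,y,t)$ being bounded above by $\hat C(t)/(V_x(\sqrt t)^{1/2} V_y(\sqrt t)^{1/2}) \exp(-d^2/(4(1+\varepsilon)t))$ — together with the relative volume comparison (Lu–Minguzzi–Ohta) to show that convolution against the heat kernel maps $L^p(\mu)$ errors into small $L^\infty$ perturbations, at the cost of a factor depending on $\hat C(t)^{1/p}$. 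Running this through a maximum-principle or De Giorgi–Nash–Moser iteration on $F$ produces a differential inequality of the form $\partial_t (\text{something}) \geq -(\text{const depending on } C_{29}, \hat C(t))(\dots)$, whose integration yields the time-dependent damping factor $\underline J(t) = 2^{-1/(\tau-1)} e^{-(\tau-1)^{n/(2p-n)}(4C_{29}\hat C(t)^{1/p})^{2p/(2p-n)} t}$; the exponents $2p/(2p-n)$ and $n/(2p-n)$ are the signature of balancing the $L^p$-norm against the Gaussian/volume factors via Hölder, exactly as in the integral-Ricci literature (Petersen–Wei, Zhang–Zhu). The parameter $\tau>1$ is a free splitting parameter from the iteration that one optimizes or simply fixes.

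Third, with the damped quantity in hand, one runs the classical Li-Yau argument on the compact manifold: apply the maximum principle to $tF$ (or a cutoff-free version since $M$ is compact), use that at an interior space-time maximum $\nabla F = 0$ and $\partial_t F \geq 0$ and $\Delta_\phi F \leq 0$, and conclude the pointwise bound $\underline J(t)\,|\nabla u|^2/u^2 - \alpha\, \partial_t u/u \leq (2n+1)/(2t\underline J(t))$. The constant $2n+1$ rather than the classical $n$ reflects the loss from the $\alpha>1$ correction and from the curvature-error absorption (one typically picks up an extra additive constant in the iteration). The compactness of $M$ removes all cutoff-function technicalities and guarantees the maximum is attained, which streamlines this last step considerably.

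The main obstacle I anticipate is the middle step: making rigorous the claim that the $L^p(\mu)$-bounded curvature error, after being smoothed by the $\phi$-heat semigroup, contributes only a controllable perturbation to the evolution of $F$. This requires carefully combining (i) the Gaussian upper bound of Theorem \ref{hkestm} — note its prefactor $\hat C(t)$ itself grows in $t$ through the $\exp(2D_2\sqrt{K_\varepsilon(q,10\sqrt t)}\sqrt t)$ term, which is why $\underline J(t)$ depends on $\hat C(t)$ and not a constant — (ii) the relative volume comparison to handle the $V_x(\sqrt t)^{-1/2}V_y(\sqrt t)^{-1/2}$ factors when integrating, and (iii) a Moser-type iteration to upgrade an $L^1$ or $L^p$ bound on $F^+$ to an $L^\infty$ bound, tracking all constants' dependence on $n,p,a,b$. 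The bookkeeping of how $C_{29}$ and $\hat C(t)^{1/p}$ enter with the precise exponent $2p/(2p-n)$ is delicate but follows the established template; the novelty is purely in feeding the weighted, $\varepsilon$-range heat-kernel bound into that template.
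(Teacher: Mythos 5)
Your overall strategy is pointed in the right direction: you correctly identify the Zhang--Zhu template, the role of the Gaussian upper bound of Theorem \ref{hkestm}, the origin of $C_{29}$, and the H\"older balancing that produces the exponent $\frac{2p}{2p-n}$. But there is a genuine gap precisely at the step you yourself flag as the ``main obstacle'', and the mechanism you propose there (heat-semigroup smoothing of the curvature error fed into a maximum-principle or De Giorgi--Nash--Moser iteration on the \emph{unweighted} quantity $F = t(|\nabla f|^2 - \alpha\partial_t f)$) is not how the argument closes and would fail as described: in the Bochner computation the error term $V(x) = \max\{Ke^{4(\varepsilon-1)\phi/(n-1)} + C_{28}|\nabla\phi|^2, 0\}$ appears multiplied by $|\nabla f|^2$, and since $V$ is only controlled in $L^p(\mu)$ there is no pointwise absorption available for $F$; an iteration on $F$ has no equation with controllable coefficients to run on, and integrating a differential inequality ``for $F$'' cannot by itself generate the spatially varying damping that the estimate requires.

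The missing idea --- the heart of \cite{ZhangZhu2018} and of the paper's Section~7 --- is to build the damping into the Harnack quantity from the start: set $Q = J(x,t)\frac{|\nabla u|^2}{u^2} - \alpha\frac{\partial_t u}{u}$ with $J$ a genuinely space-time dependent auxiliary function chosen to solve the nonlinear parabolic problem $\Delta_\phi J - 2VJ - \frac{5}{\delta}\frac{|\nabla J|^2}{J} - \partial_t J = 0$, $J(\cdot,0)=1$, so that in the evolution inequality for $Q$ the dangerous coefficient $\Delta_\phi J - \frac{5|\nabla J|^2}{\delta J} - J_t - 2JV$ of $|\nabla f|^2$ vanishes identically. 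The substitution $J = w^{-1/(\tau-1)}$ with $\tau = 5/\delta$ linearizes this to $\Delta_\phi w + 2(\tau-1)Vw = \partial_t w$, $w(\cdot,0)=1$; only here does the heat kernel enter, through Duhamel's formula $w(x,t) = 1 + 2(\tau-1)\int_0^t\int_M H_\phi(x,y,t-s)V(y)w(y,s)\,d\mu(y)\,ds$, where H\"older against the Gaussian upper bound and the volume comparison give $\int_M H_\phi(x,y,t-s)V(y)\,d\mu(y) \le C_{29}\hat C(t)^{1/p}(t-s)^{-n/(2p)}$, and a splitting of the time integral plus Gr\"onwall yields $w \le 2\exp\bigl((4(\tau-1)C_{29}\hat C(t)^{1/p})^{2p/(2p-n)}t\bigr)$, i.e.\ $\underline J(t) \le J \le 1$. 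With this $J$ in hand the remainder is the classical compact maximum principle applied to $tQ$, with $\delta = \frac{2}{2n+1}$ chosen to kill the $|\nabla f|^4$ coefficient (so $\tau$ is not a free iteration parameter, it is pinned by the Cauchy--Schwarz split), giving $Q \le \frac{2n}{(2-\delta)Jt}$ and hence the stated bound after replacing $J$ by $\underline J(t)$. Without the auxiliary PDE for $J$, its linearization, and the Duhamel--Gr\"onwall bound, your sketch cannot produce the precise form of $\underline J(t)$ nor make the maximum-principle step close.
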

 This result provides a partial resolution to the open problem concerning negative-dimensional $N-$Ricci curvature regimes posed by S. Ohta in \cite{Ohta2016,Ohta2021}.

The paper is structured as follows: 
\begin{itemize}
	\item Section~2 reviews fundamental concepts including $N$-Ricci curvature bounds, $\varepsilon$-range geometric conditions, and presents the volume comparison theorem from \cite{Lu-Ohta} coupled with Fujitani's local Sobolev inequality \cite{Fujitani2}
	\item Section~3 develops the parabolic mean value inequality through adaptation of techniques from \cite{sal2,WW2}
	\item Section~4 establishes the Gaussian upper and lower bounds for the $\phi$-heat kernel
	\item Section~5 contains the proofs of:
	\begin{itemize}
		\item The $L^1_\phi$-Liouville theorem for $\phi$-subharmonic functions
		\item $L^1_\phi$-uniqueness property for the $\phi$-heat equation
	\end{itemize}
	\item Section~6 derives spectral estimates for $\Delta_\phi$, providing the eigenvalue bounds
	\item Section~7 constructs the complete Li-Yau-type estimate through heat kernel estimate
\end{itemize}
	\section{preliminaries}
	In this paper, we are primarily concerned with the weighted Riemannian manifolds $(M^n, g, \mu)$ where $(M^n,g)$ is a complete Riemannian manifolds and $\nu$ is a Radon measure which can be represented as $d\mu=e^{-\phi}d\nu_{g}$ by the Riemannian volume measure $d\nu_{g}$ and a smooth function $\phi$. The $N-Ricci\ curvature$ of $(M^n, g, \mu)$ is defined as follows:
	$$ Ric_{\phi}^{N}:=Ric+\nabla^{2}\phi-\frac{d\phi\otimes d\phi}{N-n},$$
	for $N\in (-\infty, 1]\bigcup [n,+\infty]$, where when $N=n$, we only consider $\phi\equiv C$, which means $Ric_{\phi}^{N}=Ric$ and when $N=+\infty$ the last term becomes 0. The $N-Ricci \ curvature$ is an extension of Bakry-\'Emery Ricci curvature and it is easy to check that when $N\in [n,+\infty]$, it is equal to the Bakry-\'Emery Ricci curvature.
	
	In \cite{Lu-Ohta} , Lu-Minguzzi-Otha introduced a new notation called $\epsilon-range$, which is defined as
	\begin{equation}\label{epsrange}
		|\eps|<\sqrt{\frac{N-1}{N-n}}\text{, } \eps=0\text{, for}\ N=1 \text{ and } \eps\in\mathbb{R}\text{ for
		} N=n.
	\end{equation}
	With this, they give a new lower $N-Ricci\ curvature$ bound condition as 
	\begin{equation}
		Ric_{\phi}^{N}(v)\ge Ke^{\frac{4(\eps-1)\phi(x)}{n-1}}g(v,v)
	\end{equation}
	for $K\in \mathbb{R}$ and $v\in T_x(M)$. In addition, we define the constant $c$ associated with $\eps$ as
	$$c=\frac{1}{n-1}(1-\eps^2\frac{N-n}{N-1})>0$$
	for $N\neq 1$ and $c=(n-1)^{-1}$ when $N=1$. We also set 
	$$K_{\eps}(x)=max\{0,\ \underset{v\in U_xM^n}{sup}(-e^{\frac{-4(\eps-1)}{n-1}\phi(x)})Ric_{\phi}^{N}(v)\}$$
	and
	$$K_{\eps}(q,R)=\underset{x\in B_q(R)}{sup}K_{\eps}(x)$$
	for $q\in M^n$ and $R>0$, where $U_xM^n$ denotes the unit vector in $T_xM^n$.
	
	The reason they introduce this condition is to derive a new weighted volume comparison theorem. To provide the representation of this theorem, we need to first introduce several new concepts. Where the first concept is the comparison function $\mathbf{s}_{\kappa}$, which is defined as
	\begin{equation}
		\mathbf{s}_{\sqrt{K}}(t)=
		\begin{cases}
			\frac{\sin(\sqrt{K}t )}{\sqrt{K}} & \text{if } K > 0, \\
			t & \text{if } K = 0, \\
			\frac{\sinh(\sqrt{-K} t)}{\sqrt{-K}} & \text{if } K < 0.
		\end{cases}
	\end{equation} 
	In this paper, the notation $V_x(R)$ is used to describe the measure of the geodesic ball $B_x(R)$, which means $V_x(R)=\mu(B_x(R))$ and $V_x(r, R)=\mu(B_x(R)\setminus B_x(r))$ for $0<r<R$.  
	The $weighted\ Laplacian\ \Delta_\phi$ on $(M^n, g, \mu)$ is defined as 
	\begin{equation}
		\Delta_\phi=\Delta-\langle\nabla\phi, \nabla\rangle.
	\end{equation} 
	Therefore, a function $u$ called $\phi-harmonic$ if $\Delta_\phi u=0$ and $\phi-subharmonic$ if $\Delta_\phi u\ge0$. It is easy to check that $\Delta_\phi$ is self-adjoint with respect to the measure $\mu$. 
	
	In \cite{Lu-Ohta}, Lu-Minguzzi-Otha prove that the $\phi$-Laplacian comparison under the condition that the weighted Riemannian manifold $(M^n, g, \mu)$ possesses the lower $N-Ricci\ $curvature bound with $\eps-range$.
	\begin{theorem}\label{lpcp}
		Let $(M^n, g, \mu)$ be a weighted Riemannian manifold be forward complete and under lower $N-Ricci\ curvature$ bound with $\eps-range$, which means 
		$$Ric_{\phi}^{N}(v)\ge Ke^{\frac{4(\eps-1)\phi(x)}{n-1}}g(v,v)$$
		for all $\eps$ satisfies condition \eqref{epsrange}, $v\in T_x(M^n)\textbackslash 0$ and a smooth function $\phi$. 
		
		If $\phi$ satisfies 
		$$0<a\le e^{\frac{2(1-\eps)\phi(x)}{n-1}}\le b,$$
		then for $\forall p\in M^n$, the distance function $r(x)=d(x, p)$ satisfies
		$$\Delta_\phi r(x)\le \frac{1}{c\rho}\frac{\mathbf{s}_{cK}(r(x)/b)'}{\mathbf{s}_{cK}(r(x)/b)},$$
		on $M^n\textbackslash(\{p\}\bigcup Cut(p))$. Here, $\rho$ takes the value $a$ when $\mathbf{s}_{c\kappa}(t)'\ge0$ and $b$
		when $\mathbf{s}_{c\kappa}(t)'<0$. The notation $Cut(p)$ denotes the cut locus of $p$. 
	\end{theorem}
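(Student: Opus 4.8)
\emph{Proof proposal.} The plan is to reduce the statement to a one–dimensional Riccati comparison along a fixed minimizing geodesic and to absorb the weight $\phi$ by a conformal reparametrization of arc length. Fix $p\in M^n$ and $x\notin\{p\}\cup Cut(p)$, let $\gamma:[0,r_0]\to M^n$ be the unique unit–speed minimizing geodesic from $p$ to $x$, so $r(\gamma(t))=t$, and set $\psi(t):=\phi(\gamma(t))$ and $m_\phi(t):=\Delta_\phi r(\gamma(t))$. Starting from the classical Bochner identity along $\gamma$, which together with $|\Hess r|^2\ge(\Delta r)^2/(n-1)$ gives $\tfrac{d}{dt}\Delta r\le-\tfrac{(\Delta r)^2}{n-1}-\Ric(\gamma',\gamma')$, and using $\Delta_\phi r=\Delta r-\psi'$, $\tfrac{d}{dt}\psi'=\Hess\phi(\gamma',\gamma')$ and $\Ric_\phi^N(\gamma',\gamma')=\Ric(\gamma',\gamma')+\Hess\phi(\gamma',\gamma')-\tfrac{(\psi')^2}{N-n}$, one obtains
\[
m_\phi'(t)\ \le\ -\,\frac{\bigl(m_\phi(t)+\psi'(t)\bigr)^2}{n-1}\ -\ \Ric_\phi^N(\gamma',\gamma')\ -\ \frac{\psi'(t)^2}{N-n}.
\]

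Next introduce the new parameter $s=s(t):=\int_0^t e^{-\frac{2(1-\eps)\psi(\tau)}{n-1}}\,d\tau$, so that $\tfrac{ds}{dt}=e^{-\frac{2(1-\eps)\psi}{n-1}}\in[b^{-1},a^{-1}]$, together with the rescaled mean curvature $\hat m(s):=e^{\frac{2(1-\eps)\psi}{n-1}}\,m_\phi$ and $\Psi:=\tfrac{d\psi}{ds}=e^{\frac{2(1-\eps)\psi}{n-1}}\psi'$. Differentiating $\hat m$ in $s$ and invoking the curvature hypothesis $\Ric_\phi^N(\gamma',\gamma')\ge Ke^{\frac{4(\eps-1)\psi}{n-1}}$ — whose conformal factor is exactly cancelled by the rescaling, leaving a clean $-K$ — one is led, after expanding the square and collecting terms, to
\[
\frac{d\hat m}{ds}\ \le\ -\,\frac1{n-1}\Bigl(\hat m^2+2\eps\,\hat m\,\Psi+\tfrac{N-1}{N-n}\,\Psi^2\Bigr)\ -\ K.
\]
The quadratic form in $(\hat m,\Psi)$ inside the bracket dominates $\bigl(1-\eps^2\tfrac{N-n}{N-1}\bigr)\hat m^2$ for every value of $\Psi$ precisely when $|\eps|<\sqrt{\tfrac{N-1}{N-n}}$ — this is exactly where the $\eps$–range enters, rendering the associated $2\times2$ matrix positive semidefinite, the sharp constant being $(n-1)c$. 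Hence $\hat m$ satisfies the scalar Riccati inequality
\[
\frac{d\hat m}{ds}\ \le\ -\,c\,\hat m^2\ -\ K,\qquad c=\tfrac1{n-1}\Bigl(1-\eps^2\tfrac{N-n}{N-1}\Bigr).
\]

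It then remains to run the Riccati comparison and unwind the change of variables. Since $\Delta r(\gamma(t))=(n-1)/t+O(t)$ and $\langle\nabla\phi,\nabla r\rangle$ is bounded near $p$, we have $\hat m(s)\sim(n-1)/s$ as $s\to0^+$, which (because $c(n-1)\le1$) matches the blow–up rate $1/(cs)$ of the model solution $g_0(s):=\tfrac1c\,\frac{\mathbf s_{cK}(s)'}{\mathbf s_{cK}(s)}$ of $g_0'=-cg_0^2-K$. Applying Grönwall to $h:=\hat m-g_0$ via $h'\le-c(\hat m+g_0)h$ yields $\hat m(s)\le g_0(s)$ for all $s$ in the interval on which $\gamma$ minimizes; in particular $\gamma$ must leave the minimizing regime before $s$ reaches the first zero of $\mathbf s_{cK}$, so every expression below stays well defined and positive. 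Finally $\Delta_\phi r(x)=e^{-\frac{2(1-\eps)\psi(r_0)}{n-1}}\,\hat m(s(r_0))\le e^{-\frac{2(1-\eps)\psi(r_0)}{n-1}}g_0(s(r_0))$; using $s(r_0)\in[r_0/b,\,r_0/a]$, the monotone decrease of $t\mapsto\mathbf s_{cK}(t)'/\mathbf s_{cK}(t)$, and $b^{-1}\le e^{-\frac{2(1-\eps)\psi}{n-1}}\le a^{-1}$, and splitting into the cases $\mathbf s_{cK}(r_0/b)'\ge0$ and $\mathbf s_{cK}(r_0/b)'<0$, one arrives at $\Delta_\phi r(x)\le\frac1{c\rho}\frac{\mathbf s_{cK}(r_0/b)'}{\mathbf s_{cK}(r_0/b)}$ with $\rho=a$ in the first case and $\rho=b$ in the second.

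The hard part is the reparametrization step: producing the rescaled Riccati inequality with precisely the constant $c$ requires the correct completion of the square, and the $\eps$–range assumption is exactly what makes the residual quadratic form nonnegative — this is the algebraic heart of the result. A secondary subtlety is the comparison at the singular endpoint $s\to0^+$, most delicate in the borderline case $\eps=0$, where $\hat m$ and $g_0$ blow up at the same rate so one must control the next-order term (e.g.\ by perturbing the basepoint, or by passing to $v$ with $\hat m=\tfrac1c\,v'/v$ and $v(0)=0$), together with the bookkeeping that converts the crude bounds $b^{-1}\le ds/dt\le a^{-1}$ into the dichotomy $\rho\in\{a,b\}$.
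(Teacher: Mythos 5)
Your proposal is correct, and it is essentially the proof of the cited source: the paper itself does not prove Theorem \ref{lpcp} but quotes it from Lu--Minguzzi--Ohta \cite{Lu-Ohta}, and your argument reconstructs their proof --- the reparametrization $s$ with $ds/dt=e^{-\frac{2(1-\varepsilon)\phi}{n-1}}$, the rescaled Riccati inequality $\frac{d\hat m}{ds}\le -c\,\hat m^2-K$ obtained by completing the square in $(\hat m,\Psi)$ (which is exactly where the $\varepsilon$-range yields the constant $c=\frac{1}{n-1}\bigl(1-\varepsilon^2\frac{N-n}{N-1}\bigr)$), and comparison with $\frac1c\,\mathbf{s}_{cK}'/\mathbf{s}_{cK}$ followed by the $a,b$ bookkeeping that produces the dichotomy $\rho\in\{a,b\}$. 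The subtleties you flag (the singular initial condition at $s\to0^+$ and the degenerate cases $N=1$, $N=n$) are handled in the standard way in \cite{Lu-Ohta}, so nothing essential is missing.
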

	\begin{remark}
		In all subsequent part of this paper, we refer to $(M^n, g, \mu)$ a weighted Riemannian manifold under lower $N-Ricci\ curvature$ bound with $\eps-range$ if it satisfies the same conditions outlined in Theorem \ref{lpcp}.
	\end{remark}
	
	With this, they derive the Bishop-Gromov type volume comparison theorem
	\begin{theorem}\label{VCP}
		Let $(M^n, g, \mu)$ be a weighted Riemannian manifold satisfies same conditions as Theorem \ref{lpcp}. Then, for all $x\in M^n$ and $0<r\le R$, we have the inequality
		$$\frac{V_x(R)}{V_x(r)}\le \frac{b\int_{0}^{min\{R/a,\ \pi/\sqrt{cK}\}}\mathbf{s}_{cK}(t)^{1/c}dt}{a\int_{0}^{r/b}\mathbf{s}_{cK}(t)^{1/c}dt}$$
		. Here R must satisfy $R\le \frac{b\pi}{c\sqrt{K}}$ if $K>0$, while $\frac{b\pi}{c\sqrt{K}}=+\infty$ when $K\le 0$.
	\end{theorem}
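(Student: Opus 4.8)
This is the weighted, $\varepsilon$-range version of the classical Bishop--Gromov volume comparison, and the plan is to run the usual monotonicity argument using only the $\phi$-Laplacian comparison of Theorem~\ref{lpcp}. Fix $x\in M^n$ and pass to geodesic polar coordinates $(t,\theta)\in(0,\infty)\times U_xM^n$ about $x$. Writing the Riemannian volume element as $\mathcal A(t,\theta)\,dt\,d\theta$ and setting $\mathcal A_\phi(t,\theta):=e^{-\phi(\exp_x(t\theta))}\mathcal A(t,\theta)$, one has
\[
  V_x(R)=\int_{U_xM^n}\int_0^{\min\{R,\tau(\theta)\}}\mathcal A_\phi(t,\theta)\,dt\,d\theta ,
\]
where $\tau(\theta)$ is the distance from $x$ to its cut point in the direction $\theta$. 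Along each unit-speed minimizing geodesic the first-variation formulas give $\partial_t\log\mathcal A(t,\theta)=\Delta r$ and $\Delta_\phi r=\Delta r-\partial_t\bigl(\phi(\exp_x(t\theta))\bigr)$, hence
\[
  \partial_t\log\mathcal A_\phi(t,\theta)=\Delta_\phi r\bigl(\exp_x(t\theta)\bigr),\qquad 0<t<\tau(\theta).
\]

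Next I would feed Theorem~\ref{lpcp} into this first-order ODE: on $M^n\setminus(\{x\}\cup\mathrm{Cut}(x))$ it yields
\[
  \partial_t\log\mathcal A_\phi(t,\theta)\ \le\ \frac{1}{c\rho}\,\frac{\mathbf{s}_{cK}(t/b)'}{\mathbf{s}_{cK}(t/b)},
\]
with $\rho=a$ on the set where $\mathbf{s}_{cK}(t/b)'\ge 0$ and $\rho=b$ where $\mathbf{s}_{cK}(t/b)'<0$. The right-hand side is a constant multiple of the logarithmic derivative of a model radial density. On the initial interval where $\mathbf{s}_{cK}(t/b)'\ge 0$ one has $\rho=a$, and integrating this inequality while using $0<a\le e^{\frac{2(1-\varepsilon)\phi}{n-1}}\le b$ to absorb the factor $1/\rho$ and to convert the internal rescaling $t/b$ into $\mathbf{s}_{cK}$ evaluated at $t/a$ controls $\mathcal A_\phi(\cdot,\theta)$ there by a density proportional (up to the power $1/c$) to $\mathbf{s}_{cK}(t/a)$; on the complementary interval $\rho=b$ and the corresponding comparison density involves $\mathbf{s}_{cK}(t/b)$, with $t\mapsto\mathcal A_\phi(t,\theta)/\mathbf{s}_{cK}(t/b)^{1/c}$ non-increasing. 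Patching the two sign-regimes of $\mathbf{s}_{cK}(\cdot/b)'$ is exactly what introduces the distinct rescalings $t\mapsto t/a$ and $t\mapsto t/b$, hence the asymmetric radii $R/a$ and $r/b$ in the claimed bound; the first positive zero of $\mathbf{s}_{cK}$ (present only when $K>0$) forces the truncation at $\pi/\sqrt{cK}$ and the hypothesis $R\le b\pi/(c\sqrt K)$.

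To pass from the smooth region to all of $M^n$ I would use Calabi's trick: extend $\mathcal A_\phi(\cdot,\theta)$ by $0$ for $t\ge\tau(\theta)$; since $\mathcal A_\phi(\cdot,\theta)$ is continuous on $[0,\tau(\theta)]$, vanishes there, and $r$ ceases to be minimizing past the cut point, the monotonicity of the normalized densities persists on all of $(0,\infty)$. The elementary Gromov integration lemma — if $f\ge 0$ and $f/g$ is non-increasing then $R\mapsto\int_0^R f\big/\int_0^R g$ is non-increasing — then upgrades the radial pointwise estimates of the previous paragraph to an upper bound for $\int_0^{\min\{R,\tau(\theta)\}}\mathcal A_\phi(t,\theta)\,dt$ and a lower bound for $\int_0^{\min\{r,\tau(\theta)\}}\mathcal A_\phi(t,\theta)\,dt$ of the respective forms $C(\theta)\,b\int_0^{\min\{R/a,\pi/\sqrt{cK}\}}\mathbf{s}_{cK}(t)^{1/c}\,dt$ and $C(\theta)\,a\int_0^{r/b}\mathbf{s}_{cK}(t)^{1/c}\,dt$ with a common $\theta$-dependent constant $C(\theta)$. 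Dividing, noting that the resulting ratio is independent of $\theta$, integrating numerator and denominator separately over $U_xM^n$, and cancelling $C(\theta)$ yields the stated inequality after rearranging.

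I expect the only genuine difficulty to be the constant bookkeeping in the second step: keeping precise track of how the dichotomy $\rho\in\{a,b\}$ of Theorem~\ref{lpcp} and the internal rescaling $t\mapsto t/b$ interact with integration in $t$, so as to land exactly on the asymmetric form of the bound — radius $R/a$ upstairs and $r/b$ downstairs, with the $\pi/\sqrt{cK}$ truncation and the hypothesis on $R$. The cut-locus extension via Calabi's argument and the two concluding integrations (in $t$ by Gromov's lemma, then over $U_xM^n$) are routine.
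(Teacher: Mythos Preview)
The paper does not prove Theorem~\ref{VCP} at all: it is stated in Section~2 as a result of Lu--Minguzzi--Ohta \cite{Lu-Ohta} and used thereafter as a black box, so there is no in-paper argument to compare your proposal against.

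That said, your outline is the standard shape for such a result and is essentially the route taken in \cite{Lu-Ohta}: polar coordinates, the identity $\partial_t\log\mathcal A_\phi=\Delta_\phi r$, the $\phi$-Laplacian comparison of Theorem~\ref{lpcp}, and then a Gromov-type integration. Two cautions. First, your final step (``integrating numerator and denominator separately over $U_xM^n$, and cancelling $C(\theta)$'') is not legitimate as written if $C(\theta)$ really depends on $\theta$; the correct manoeuvre is to cross-multiply for each fixed $\theta$ to get a \emph{linear} inequality with $\theta$-independent model factors, and only then integrate over $U_xM^n$. Second, you are right to flag the asymmetric radii $R/a$ versus $r/b$ as the crux, but it is more than bookkeeping with the $\rho\in\{a,b\}$ dichotomy: in \cite{Lu-Ohta} the mechanism is a reparametrization of arc length along each geodesic by the weight, giving a new parameter trapped between $t/b$ and $t/a$; the comparison is clean in that parameter, and translating back via the two-sided bounds is exactly what produces $R/a$ in the numerator and $r/b$ in the denominator. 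Your sketch gestures at this (``convert the internal rescaling $t/b$ into $\mathbf s_{cK}$ evaluated at $t/a$'') but does not identify the change of variables that makes it work.
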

	Using the Bishop-Gromov type volume comparison theorem, we can derive two essential tools. The first is an estimate for the volume ratio of geodesic balls with the same center. 
	\begin{corollary}\label{col1}
		Under the same assumptions as Theorem \ref{VCP}, we have 
		$$ \frac{V_x(r)}{V_x(s)}\le (\frac{b}{a})^{(1+2c)/c}(\frac{r}{s})^{(1+c)/c}exp(\sqrt{\frac{K_1}{c}}\frac{r}{a})$$
		for all $x\in M^n$, where $K_{1}=max\{0, -K\}$.
	\end{corollary}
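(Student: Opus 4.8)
The plan is to start from the Bishop--Gromov inequality of Theorem~\ref{VCP}, applied with the larger radius $R=r$ and the smaller radius $s$ (so $0<s\le r$, and, when $K>0$, subject to the range restriction $r\le \pi b/(c\sqrt K)$ inherited from that theorem). This gives
\[
\frac{V_x(r)}{V_x(s)}\;\le\;\frac{b\int_0^{\min\{r/a,\,\pi/\sqrt{cK}\}}\mathbf{s}_{cK}(t)^{1/c}\,dt}{a\int_0^{s/b}\mathbf{s}_{cK}(t)^{1/c}\,dt},
\]
so everything then reduces to bounding the numerator integral from above and the denominator integral from below by the model quantity $\tfrac{c}{1+c}\,\ell^{(1+c)/c}$ (times an exponential correction in the numerator), evaluated at the two mismatched scales $\ell=r/a$ and $\ell=s/b$.

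For the numerator I would set $K_1=\max\{0,-K\}$, so that $-K_1\le K$; since $\mathbf{s}_\kappa(t)$ is non-increasing in the parameter $\kappa$ on the interval before its first zero, this yields $\mathbf{s}_{cK}(t)\le \mathbf{s}_{-cK_1}(t)$, i.e.\ $\mathbf{s}_{cK}(t)\le t$ when $K_1=0$ and $\mathbf{s}_{cK}(t)\le \sinh(\sqrt{cK_1}\,t)/\sqrt{cK_1}$ when $K_1>0$. The elementary bound $\sinh u\le u\cosh u\le ue^{u}$ for $u\ge0$ then gives $\mathbf{s}_{cK}(t)\le t\,e^{\sqrt{cK_1}\,t}$ uniformly. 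Raising to the power $1/c$, using $\sqrt{cK_1}/c=\sqrt{K_1/c}$, estimating $e^{\sqrt{K_1/c}\,t}\le e^{\sqrt{K_1/c}\,r/a}$ on the domain and enlarging it to $[0,r/a]$, the numerator is at most $b\,e^{\sqrt{K_1/c}\,r/a}\,\tfrac{c}{1+c}\,(r/a)^{(1+c)/c}$. For the denominator, when $K\le0$ one has $\mathbf{s}_{cK}(t)\ge t$ (from $\sinh u\ge u$, and $\mathbf{s}_0(t)=t$), whence $\int_0^{s/b}\mathbf{s}_{cK}(t)^{1/c}\,dt\ge \tfrac{c}{1+c}(s/b)^{(1+c)/c}$. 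Dividing the two bounds, the factors $\tfrac{c}{1+c}$ cancel, the ratio $(r/a)^{(1+c)/c}/(s/b)^{(1+c)/c}=(b/a)^{(1+c)/c}(r/s)^{(1+c)/c}$ combines with the leading $b/a$ from Theorem~\ref{VCP} via $1+\tfrac{1+c}{c}=\tfrac{1+2c}{c}$, and the asserted inequality follows.

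The only point I expect to require genuine care is the case $K>0$. There $K_1=0$, so the exponential disappears and the numerator estimate above still applies verbatim, but the comparison function $\mathbf{s}_{cK}(t)=\sin(\sqrt{cK}\,t)/\sqrt{cK}$ is no longer bounded below by $t$, so the elementary lower bound for the denominator must be replaced --- either by restricting to the natural range of $t$ and using concavity of $\sin$, or by a monotonicity reduction to the $K=0$ model. Everything else is routine bookkeeping with the two $\sinh$ inequalities and with the discrepancy between the scales $r/a$ (numerator) and $s/b$ (denominator).
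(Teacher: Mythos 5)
Your $K\le 0$ argument is correct and close in spirit to the paper's proof: the paper also works with the nonpositively curved model $\mathbf{s}_{-cK_1}$, but instead of bounding the numerator and denominator integrals separately against the Euclidean model it first derives the multiplicative bound $\mathbf{s}_{-cK_1}(r)\le \frac{r}{s}e^{\sqrt{cK_1}\,r}\,\mathbf{s}_{-cK_1}(s)$ from the identity $\mathbf{s}_{-cK_1}(r)/\mathbf{s}_{-cK_1}(s)=\exp\bigl(\int_s^r\sqrt{cK_1}\coth(\sqrt{cK_1}t)\,dt\bigr)$ and then compares the two integrals; your bookkeeping (the $\frac{c}{1+c}$ cancellation and $1+\frac{1+c}{c}=\frac{1+2c}{c}$) reproduces the same constant, so for $K\le 0$ the two routes are essentially equivalent.

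The genuine gap is the case $K>0$, which you flag but do not close. There the denominator bound $\mathbf{s}_{cK}(t)\ge t$ fails, and of your two suggested repairs the concavity one does not give the stated inequality: a bound of the type $\sin u\ge \frac{2}{\pi}u$ holds only up to $u=\pi/2$ (the range restriction of Theorem \ref{VCP} does not force $\sqrt{cK}\,s/b\le \pi/2$) and in any case would introduce an extra factor of order $(\pi/2)^{1/c}$ that is absent from the statement. The fix that works — and is in effect what the paper does — is to avoid the sine model altogether: since $-K_1=\min\{K,0\}\le K$, the curvature hypothesis with constant $K$ implies the same hypothesis with constant $-K_1$, so Theorem \ref{VCP} may be applied with $-K_1$ in place of $K$; then both numerator and denominator involve $\mathbf{s}_{-cK_1}\ge t$, the truncation at $\pi/\sqrt{cK}$ disappears, and your $K\le0$ computation applies verbatim for every $K$. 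Alternatively, your ``monotonicity reduction'' can be made rigorous via the standard lemma that $\ell\mapsto \int_0^{\ell}\mathbf{s}_{cK}(t)^{1/c}dt\,/\int_0^{\ell}t^{1/c}dt$ is non-increasing (because $(\mathbf{s}_{cK}(t)/t)^{1/c}$ is non-increasing in $t$), which also yields the exact constant; but as written, the key positive-curvature case is left unproved.
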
 
	\begin{proof}
		We know that 
		\begin{equation}
			\frac{\mathbf{s}_{-cK_1}(r)}{\mathbf{s}_{-cK_1}(s)}=\exp(\int_{s}^{r}\sqrt{cK_1}coth(\sqrt{cK_1}t)dt).
		\end{equation}
		Hence, by direct computations, we obtain
		\begin{equation}
			\exp(\int_{s}^{r}\sqrt{cK_1}coth(\sqrt{cK_1}t)dt)\le \exp(\int_{s}^{r}(\frac{1}{t}+\sqrt{cK_1})dt)\le\frac{r}{s}\exp(\sqrt{cK_1}r),
		\end{equation}
		Which means 
		\begin{equation}
			\mathbf{s}_{-cK_1}(r)\le\mathbf{s}_{-cK_1}(s)\frac{r}{s}\exp(\sqrt{cK_1}r).
		\end{equation}
		Then by volume comparison theorem, it is easy to prove that 
		\begin{equation}
			\begin{aligned}
				\frac{V_x(r)}{V_x(s)}&\le\frac{b\int_{0}^{min\{r/a,\ \pi/\sqrt{cK}\}}\mathbf{s}_{cK}(t)^{1/c}dt}{a\int_{0}^{s/b}\mathbf{s}_{cK}(t)^{1/c}dt}
				\\&\le\frac{b\int_{0}^{min\{r/a,\ \pi/\sqrt{-cK_1}\}}\mathbf{s}_{-cK_1}(t)^{1/c}dt}{a\int_{0}^{s/b}\mathbf{s}_{-cK_1}(t)^{1/c}dt}
				\\&\le(\frac{b}{a})^{(1+2c)/c}(\frac{r}{s})^{(1+c)/c}\exp(\sqrt{\frac{K_1}{c}}\frac{r}{a}).
			\end{aligned}
		\end{equation}
	\end{proof}
	Using Corollary \ref{col1}, we can derive the volume doubling property, which states:
	\begin{equation}\label{vldb}
		\frac{V_x(2R_1)}{V_x(R_1)}\le (\frac{b}{a})^{(1+2c)/c}(2)^{(1+c)/c}exp(\sqrt{\frac{K_1}{c}}\frac{2R_1}{a})
	\end{equation}
	for all $x\in M^n$ and $2R_1\le \frac{b\pi}{cK}$. 
	
	Another is the volume ratio of geodesic ball with same radius and different center.
	\begin{corollary}\label{col2}
		Let $x,\ y\in M^n$ be two distinct points and let $d=d(x, y)$ denote the distance between them. Then we have 
		$$ \frac{V_x(s)}{V_y(s)}\le (\frac{b}{a})^{(1+2c)/c}(\frac{s+d}{s})^{(1+c)/c}exp(\sqrt{\frac{K_1}{c}}\frac{s+d}{a}).$$
	\end{corollary}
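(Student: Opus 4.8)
The plan is to reduce this two-center statement to the single-center estimate of Corollary \ref{col1} by a triangle-inequality inclusion of balls. First I would note that any point $z$ with $d(x,z)<s$ satisfies $d(y,z)\le d(y,x)+d(x,z)<d+s$, so that $B_x(s)\subseteq B_y(s+d)$ and hence
$$V_x(s)=\mu(B_x(s))\le \mu(B_y(s+d))=V_y(s+d).$$

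Next I would apply Corollary \ref{col1} at the center $y$ with the pair of radii $r=s+d\ge s$, which yields
$$\frac{V_y(s+d)}{V_y(s)}\le \left(\frac{b}{a}\right)^{(1+2c)/c}\left(\frac{s+d}{s}\right)^{(1+c)/c}\exp\left(\sqrt{\frac{K_1}{c}}\,\frac{s+d}{a}\right).$$
Concatenating the two displays gives exactly the asserted bound on $V_x(s)/V_y(s)$.

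Since this is just the composition of an elementary set inclusion with an already-proved monotone volume-ratio inequality, there is no real obstacle. The only point I would flag (without dwelling on it) is that, as in Theorem \ref{VCP} and Corollary \ref{col1}, the larger radius $s+d$ should be taken in the admissible range — automatic when $K\le 0$, and a mild restriction of the form $s+d\le b\pi/(c\sqrt{K})$ when $K>0$ — so that the underlying Bishop–Gromov comparison applies verbatim.
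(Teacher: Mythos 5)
Your proof is correct and is essentially the paper's own argument: the paper disposes of this corollary in one line by substituting $r=s+d$ into Corollary \ref{col1}, which is exactly your combination of the inclusion $B_x(s)\subseteq B_y(s+d)$ with the single-center ratio bound at $y$. Your explicit triangle-inequality step and the remark about the admissible radius range for $K>0$ simply fill in details the paper leaves implicit.
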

	This result follows easily by substituting $r=s+d$ into the inequality from Corollary \ref{col1}.
	
	In \cite{Fujitani2}, the author utilized the volume comparison theorem alongside Corollaries \ref{col1} and \ref{col2} to establish the Neumann-Poincaré inequality. 
	\begin{theorem}\label{npineq}
		Let $(M^n, g, \mu)$ be a weighted Riemannian manifold under lower $N-Ricci\ curvature$ bound with $\eps-range$. If $\phi$ satisfies 
		$$0<a\le e^{\frac{2(\eps-1)\phi(x)}{N-1}}\le b,$$ then for $\varphi\in C_0(M^n)$, we have
		\begin{equation}
			\int_{B_o(R)}|\varphi-\varphi_{B_o(R)}|^2d\mu\le 2^{(n+3)}(\frac{2b}{a})^{\frac{1}{c}}exp(\sqrt{\frac{K_{\eps}(o,2R)}{c}}\frac{2R}{a})R^2\int_{B_o(2R)}|\nabla\varphi|^2d\mu,
		\end{equation}
		where $\varphi_{B_o(R)}=\frac{\int_{B_o(R)}\varphi d\mu}{\mu{(B_o(R))}}$.
	\end{theorem}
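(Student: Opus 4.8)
The plan is to deduce this Neumann--Poincar\'e inequality from a \emph{segment inequality} adapted to the weighted setting, following the classical scheme that converts a variance estimate into a double integral of pointwise oscillations. First I would record the elementary bound
\[
\int_{B_o(R)}|\varphi-\varphi_{B_o(R)}|^2\,d\mu\;\le\;\frac{1}{\mu(B_o(R))}\int_{B_o(R)}\int_{B_o(R)}|\varphi(x)-\varphi(y)|^2\,d\mu(x)\,d\mu(y),
\]
so it suffices to estimate the right-hand double integral; by density one may assume $\varphi\in C_0^\infty(M^n)$. For $x,y\in B_o(R)$ let $\gamma_{x,y}$ be a minimizing unit-speed geodesic from $x$ to $y$; then $|\varphi(x)-\varphi(y)|\le\int_0^{d(x,y)}|\nabla\varphi|(\gamma_{x,y}(s))\,ds$, and splitting at the midpoint, using Cauchy--Schwarz and $d(x,y)<2R$, gives
\[
|\varphi(x)-\varphi(y)|^2\le 2R\Big(\int_0^{d(x,y)/2}|\nabla\varphi|^2(\gamma_{x,y}(s))\,ds+\int_0^{d(x,y)/2}|\nabla\varphi|^2(\gamma_{y,x}(s))\,ds\Big).
\]
An elementary but crucial observation is that every point of $\gamma_{x,y}$ lies in $B_o(2R)$ when $x,y\in B_o(R)$: if $z=\gamma_{x,y}(s)$ then $d(o,z)\le\min\{d(o,x)+s,\ d(o,y)+d(x,y)-s\}<2R$ since one of $s$, $d(x,y)-s$ is at most $d(x,y)/2<R$. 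Hence $|\nabla\varphi|^2$ is sampled only inside $B_o(2R)$, which is exactly why the right-hand side of the assertion involves $B_o(2R)$.

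The heart of the matter is the segment estimate: for every nonnegative measurable $g$ supported in $B_o(2R)$,
\[
\int_{B_o(R)}\int_{B_o(R)}\Big(\int_0^{d(x,y)/2}g(\gamma_{x,y}(s))\,ds\Big)\,d\mu(x)\,d\mu(y)\;\le\;C\,R\,\mu(B_o(R))\int_{B_o(2R)}g\,d\mu .
\]
I would prove it by fixing $y$, passing to geodesic polar coordinates centered at $y$ on the full-measure set $M^n\setminus(\{y\}\cup\mathrm{Cut}(y))$, and writing $d\mu=\mathcal A_\phi(r,\theta)\,dr\,d\theta$ for the weighted area element. If $x=\exp_y(r\theta)$ then $\gamma_{x,y}(s)=\exp_y((r-s)\theta)$, so the substitution $u=r-s$ followed by Fubini turns the inner integral into $\int g(\exp_y(u\theta))\big(\int_{u}^{\min\{2u,\,2R\}}\mathcal A_\phi(r,\theta)\,dr\big)\,du\,d\theta$. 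Here the $\phi$-Laplacian comparison (Theorem~\ref{lpcp}), in its local form on $B_o(2R)$ where the effective lower bound is $-K_\varepsilon(o,2R)e^{\frac{4(\varepsilon-1)\phi}{n-1}}g$, controls the logarithmic derivative of $r\mapsto\mathcal A_\phi(r,\theta)$ and yields $\mathcal A_\phi(r,\theta)\le\mathcal A_\phi(u,\theta)\big(\mathbf s_{cK}(r/\rho)/\mathbf s_{cK}(u/\rho)\big)^{1/c}$ for $u\le r\le 2u$. Integrating in $r$ and invoking the ratio bound $\mathbf s_{cK}(r/\rho)/\mathbf s_{cK}(u/\rho)\le (r/u)\exp(\sqrt{cK_1}\,(r-u)/\rho)$ from the proof of Corollary~\ref{col1}, which for $r\le 2u$ contributes a factor $\le 2\exp(\sqrt{cK_1}\,u/\rho)$ (with $u\le 2R$, $\rho\ge a$, and here $K_1=K_\varepsilon(o,2R)$), produces $\int_u^{\min\{2u,2R\}}\mathcal A_\phi(r,\theta)\,dr\le C\,u\,\mathcal A_\phi(u,\theta)$ with $C$ of the claimed shape $2^{1/c}(b/a)^{1/c}\exp(\sqrt{K_\varepsilon(o,2R)/c}\,\cdot 2R/a)$. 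Reassembling $\mathcal A_\phi(u,\theta)\,du\,d\theta$ as $d\mu$ restricted to $B_y(2R)$, using $g\le g\,\mathbf 1_{B_o(2R)}$, gives $\int_{B_o(R)}(\cdots)\,d\mu(x)\le C\,R\int_{B_o(2R)}g\,d\mu$ uniformly in $y$, and integrating over $y\in B_o(R)$ supplies the factor $\mu(B_o(R))$. The Bishop--Gromov comparison (Theorem~\ref{VCP}) and Corollaries~\ref{col1}--\ref{col2} are precisely what make these volume and area-element ratios effective and fix the dependence of $C$ on $a,b,c,n$ and $K_\varepsilon(o,2R)$.

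To conclude, I would apply the segment estimate to $g=|\nabla\varphi|^2$ (extended by zero outside $B_o(2R)$), combined with the two pointwise bounds above (one for $\gamma_{x,y}$ and its mirror image for $\gamma_{y,x}$), to obtain
\[
\int_{B_o(R)}\int_{B_o(R)}|\varphi(x)-\varphi(y)|^2\,d\mu(x)\,d\mu(y)\;\le\;8R^2\,C\,\mu(B_o(R))\int_{B_o(2R)}|\nabla\varphi|^2\,d\mu ,
\]
and then divide by $\mu(B_o(R))$; a final bookkeeping of constants (the $2$'s from the midpoint splitting, the $8$ from $2R\cdot 2\cdot 2R$, and the factor $2^{1/c}(b/a)^{1/c}$, all absorbed into $2^{n+3}(2b/a)^{1/c}\exp(\sqrt{K_\varepsilon(o,2R)/c}\,\cdot 2R/a)$ using $b\ge a$ and $1/c\ge n-1$ in the $\varepsilon$-range) yields the stated inequality. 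I expect the main obstacle to be setting up the segment estimate cleanly in the weighted, variable-curvature setting: one must (i) localize the comparison to $B_o(2R)$ so that $K_\varepsilon(o,2R)$ rather than a global constant appears, (ii) carry the weighted Jacobian $\mathcal A_\phi$ (not the Riemannian one) through the entire polar-coordinate computation, and (iii) handle the cut locus, which is harmless since it is $\mu$-null and $\mu$-a.e.\ point is joined to $y$ by a unique minimizing geodesic avoiding $\mathrm{Cut}(y)$. Once the segment estimate is in place with the explicit constants furnished by Section~2, the remainder is routine.
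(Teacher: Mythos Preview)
The paper does not prove Theorem~\ref{npineq}; it is quoted from Fujitani \cite{Fujitani2}, with the single sentence ``the author utilized the volume comparison theorem alongside Corollaries~\ref{col1} and~\ref{col2} to establish the Neumann--Poincar\'e inequality'' in lieu of a proof. So there is no in-paper argument to compare against.

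That said, your plan is the standard and correct one, and matches what the paper attributes to Fujitani. The segment-inequality route (variance $\to$ double integral $\to$ geodesic oscillation $\to$ polar-coordinate Jacobian estimate via Laplacian comparison) is exactly the Cheeger--Colding/Buser scheme, and the two ingredients the paper singles out---the Bishop--Gromov comparison (Theorem~\ref{VCP}) and the ratio bounds of Corollaries~\ref{col1}--\ref{col2}---are precisely what you invoke to control $\mathcal A_\phi(r,\theta)/\mathcal A_\phi(u,\theta)$ and to track the exponential factor $\exp(\sqrt{K_\varepsilon(o,2R)/c}\cdot 2R/a)$. Two small points worth tightening: (i) when you integrate the Laplacian comparison to get the Jacobian ratio, note that Theorem~\ref{lpcp} has the argument $r/b$ inside $\mathbf s_{cK}$ but the prefactor $1/(c\rho)$ with $\rho=a$ (since $K\le 0$ locally), so the exponent on the $\mathbf s$-ratio is $b/(ca)$ rather than $1/c$; this is what ultimately produces the $(2b/a)^{1/c}$ shape once you use $r\le 2u$ and $b\ge a$; (ii) the separate factor $2^{n+3}$ does not come from the Jacobian ratio but from the numerical constants in the midpoint splitting and the doubling step, together with the normalization $1/c\ge n-1$, so keep that bookkeeping distinct from the $(2b/a)^{1/c}$ piece. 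Otherwise your outline is complete and would reproduce the inequality with the stated constants.
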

	\begin{remark}
		For $1<\theta\le 2$, we can indeed establish a similar inequality: 
		$$	\int_{B_o(R)}|\varphi-\varphi_{B_o(R)}|^2d\mu\le P_{\theta}exp(\sqrt{\frac{K_{\eps}(o,\theta R)}{c}}\frac{\theta R}{a})R^2\int_{B_o(\theta R)}|\nabla\varphi|^2d\mu,$$
		where $P_{\theta}$ is a constant that depends on $a,\ b,\ c,\ n,\ \theta$.
	\end{remark}
	
	At last, with volume doubling property and Neumann-poincar\'e inequality, Fujitani \cite{Fujitani2} prove that following local sobolev inequality holds.
	\begin{theorem}
		Under the same assumption as Theorem \ref{npineq}, there exists positive constants $D_1$, $E_1$ depending on $a,\ b,\ c,\ n$ such that
		\begin{equation}\label{lcsblv}
			\small{	\left(\int_{B_q(R)} |\varphi|^{\frac{2\nu}{\nu-2}} \, d\mu \right)^{\frac{\nu-2}{\nu}} \le E_1 \exp\left(D_1 \sqrt{K_{\eps}(q,10R)} R\right) R^2 V_q(R)^{-\frac{2}{\nu}} \int_{B_q(R)} \left( |\nabla \varphi|^2 + R^{-2} \varphi^2 \right) d\mu}
		\end{equation}
		for all $B_q(R)\in M^n$ and $\varphi\in C_{0}^{\infty}(B_q(R))$, where
		\begin{equation}
			\nu=
			\begin{cases}
				3& \text{if } c=1, \\
				1+\frac{1}{c} & \text{if } c < 1.
			\end{cases}
		\end{equation} 
	\end{theorem}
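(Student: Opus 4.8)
The plan is to deduce the local Sobolev inequality \eqref{lcsblv} from the two structural estimates already in hand: the local volume doubling property \eqref{vldb} (equivalently Corollary~\ref{col1}, which furnishes a polynomial volume lower bound with exponent $(1+c)/c$ and an exponential correction governed by $K_\eps$) and the Neumann--Poincar\'e inequality of Theorem~\ref{npineq} together with its $\theta$-dilated form from the Remark following it. This is exactly the setting of the Saloff-Coste/Hajłasz--Koskela self-improvement mechanism: a space that is doubling and supports an $L^2$-Poincar\'e inequality on balls automatically upgrades its $(1,2)$-Poincar\'e inequality to a $(2\nu/(\nu-2),2)$-Sobolev--Poincar\'e inequality, and for compactly supported test functions this takes the zeroth-order-corrected form \eqref{lcsblv}. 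I would follow the presentation in \cite{sal2} and the weighted adaptation in \cite{WW1,WW2}, carefully tracking the propagation of constants.

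First I would fix a ball $B_q(R)$ and record two consequences of Corollary~\ref{col1}: the doubling constant on every sub-ball of $B_q(10R)$ is bounded by $C(a,b,c,n)\exp\!\big(\sqrt{K_\eps(q,10R)/c}\,\cdot 20R/a\big)$, and, dually, concentric balls obey $V_x(s)\ge c_0\,(s/r)^{\nu}V_x(r)$ for $s\le r\le 10R$ with $\nu=(1+c)/c$ and $c_0$ absorbing the same exponential factor. The exponent $\nu$ is admissible in the Sobolev exponent only when $\nu>2$; when $c<1$ this is automatic, while when $c=1$ (so $(1+c)/c=2$) one invokes the elementary fact that a $\nu$-dimensional doubling bound persists with any larger $\nu$, in particular $\nu=3$, which accounts for the case distinction in the statement.

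The core of the argument is the telescoping/truncation step converting Poincar\'e plus doubling into Sobolev. For $\varphi\in C_0^\infty(B_q(R))$, extended by zero to $M^n$, one writes at each Lebesgue point $x\in\supp\varphi$ the telescoping identity $\varphi(x)=\sum_{j\ge 0}(\varphi_{B_j}-\varphi_{B_{j+1}})+(\text{average over the top ball})$ along a chain $B_j=B_x(2^{-j}R)$ shrinking to $x$; each increment $|\varphi_{B_j}-\varphi_{B_{j+1}}|$ is estimated through the $L^2$-Poincar\'e inequality on a fixed dilate of $B_j$, and the resulting series is dominated by a (geometric-weighted) sum controlled by the Hardy--Littlewood maximal function $\mathcal M(|\nabla\varphi|^2)$ via doubling. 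Feeding this pointwise bound into the strong-type $(p,p)$ bounds for the maximal operator on the doubling space, interpolating, and using the volume lower bound of the previous paragraph to replace the scale-$2^{-j}R$ factors $V_x(2^{-j}R)$ by the single factor $R^2\,V_q(R)^{-2/\nu}$, produces \eqref{lcsblv}; the top-scale term of the telescoping sum is precisely what generates the zeroth-order term $R^{-2}\varphi^2$ on the right-hand side.

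The main obstacle I anticipate is bookkeeping rather than conceptual. One must ensure that every ball entering the chaining and covering steps remains inside $B_q(10R)$, so that the single curvature quantity $K_\eps(q,10R)$ — and not $K_\eps$ on ever-larger balls — controls all the doubling and Poincar\'e constants appearing, and that the several exponential factors $\exp(\sqrt{K_\eps(\cdot)/c}\,\cdot(\cdot)R/a)$ coming from the different applications of Corollary~\ref{col1} and Theorem~\ref{npineq} (and the $\theta$-version) coalesce into one term $\exp(D_1\sqrt{K_\eps(q,10R)}\,R)$ with $D_1=D_1(a,b,c,n)$. A secondary point is to verify that $\varphi$ being compactly supported \emph{in} $B_q(R)$, rather than merely defined there, is genuinely exploited: this is what lets the average over the largest ball of each chain be discarded or absorbed into the $R^{-2}\varphi^2$ term, and it dictates the choice of chaining — centered at each $x\in\supp\varphi$ and terminated at radius comparable to $R$.
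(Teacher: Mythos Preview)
The paper does not supply its own proof of this theorem; it is quoted as a preliminary result from Fujitani \cite{Fujitani2}, with the paper noting only that ``with volume doubling property and Neumann--Poincar\'e inequality, Fujitani \cite{Fujitani2} prove[s] that following local sobolev inequality holds.'' Your outline --- doubling plus a scale-invariant $L^2$-Poincar\'e inequality fed through the Saloff-Coste/Haj\l asz--Koskela self-improvement machinery, with the exponent $\nu=(1+c)/c$ read off from the polynomial part of Corollary~\ref{col1} and bumped to $\nu=3$ when $c=1$ --- is exactly that route, and your identification of the two bookkeeping points (confining all auxiliary balls to $B_q(10R)$ so a single $K_\eps(q,10R)$ controls every constant, and using compact support to absorb the top-scale average into the $R^{-2}\varphi^2$ term) is correct.
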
 
	\begin{remark}
		Moreover, the coefficients are defined as $E_1=c_{1}(n,\ \nu)(\frac{b}{a})^{c_{2}(\nu)}$ and $D_{1}=\sqrt{\frac{c_{3}(\nu)}{a^{2}c}}$, where in this remark and the following part of this paper, the notation $c_{i}(\nu)$ means constant depends on $\nu$. So when $\phi\equiv C$, considers $a= e^{\frac{2(\eps-1)C}{N-1}}=b$ and $N=n$, then the coefficients only depend on $n$ and $\eps$, which means this inequality can be reduced to that in the lower Ricci curvature bound condition.
	\end{remark}
	
	\section{Moser's Harnack inequality for $\phi-$Heat equation}
	In this section, we will prove Moser's Harnack inequality for $\phi-$heat equation using moser iteration, which will be used to derivce the lower bound estimate of of $\phi-$heat kernal.
	
	First, we introduce the $\phi-$heat equation and $\phi-$Heat kernal. Analogous to the classical heat equation, $\phi-$heat equation is defined as 
	$$(\pa_t-\Delta_\phi)u=0.$$
	We denote the $\phi-$heat kernal as $H_{\phi}(x,y,t)$, where for each $y\in M^n$ $u(x,t)=H_{\phi}(x,y,t)$ represents the minimal positive solution of $\phi-$heat equation with the intital condition 
	$$\lim_{t\to0}u(x,t)=\delta_{\phi,y}(x),$$ where $\delta_{\phi,y}(x)$ denotes the Dirac measure with respect to the measure $\mu$ in this paper. In this paper, we focus on the weighted $L_p$ space $L_p(M^n,\mu)$ with norm $||u||_p=(\int_{M^n}|u|^pd\mu)^{\frac{1}{p}}$.
	
	In the previous section, we established the local Sobolev inequality \eqref{lcsblv}. Building on this result, along with the findings in \cite{sal2,WW2}, we obtain the following mean value inequality for the solution of the $\phi-$heat equation.
	\begin{proposition}
		Let $(M^n, g, \mu)$ be a weighted Riemannian manifold under lower $N-Ricci\ curvature$ bound with $\eps-range$. Assume $\phi$ satisfies 
		$$0<a\le e^{\frac{2(1-\eps)\phi(x)}{n-1}}\le b.$$ 
		Fix $0<p<\infty$. Then there exists positive constants $D_2=\sqrt{{\frac{c_{4}(\nu,p,n)}{a^{2}c}}}$, $E_2=c_{5}(\nu,p,n)(\frac{b}{a})^{c_{6}(\nu,p,n)}$  such that for any $s\in \mathbb{R}$ and $0<\delta<1$, any smooth positive solution $u$ of $\phi-$heat equation in cylinder $Q=B_x(R)\times (s-R^2,s)$ satisfies
		\begin{equation}\label{pbmvp}
			\underset{Q_{\delta}}{sup}\{u^p\}\le\frac{E_2 \exp\left(D_2 \sqrt{K_{\eps}(q,10R)} R\right)}{(1-\delta)^{2+\nu}R^{2}V_x(R)}\int_{Q}u^pd\mu dt,
		\end{equation}
		where $Q_\delta=B_x(\delta R)\times (s-\delta R^2,s)$ and 
		\begin{equation}
			\nu=
			\begin{cases}
				3& \text{if } c=1, \\
				1+\frac{1}{c} & \text{if } c < 1.
			\end{cases}
		\end{equation} .
	\end{proposition}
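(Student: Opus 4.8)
The plan is to run Moser's iteration in the weighted setting, following \cite{sal2,WW2}; the two analytic inputs are the local Sobolev inequality \eqref{lcsblv} and the volume doubling property \eqref{vldb} (with Corollary \ref{col1} to compare concentric balls). A positive solution $u$ is in particular a nonnegative subsolution of the $\phi$-heat equation, and I would run the iteration directly on $u$. For $p>1$ it can be started from exponent $p$ and yields \eqref{pbmvp} for that $p$; the remaining range $0<p\le1$ is then recovered from the case $p=2$ by the Li--Schoen iteration (cf.\ \cite{sal2,Li1984}). Also, it suffices to prove \eqref{pbmvp} for $\tfrac12\le\delta<1$, since for $\delta<\tfrac12$ one has $Q_\delta\subset Q_{1/2}$ and $(1-\delta)^{2+\nu}\le1$; the gain from this restriction is that all radii occurring in the iteration stay in $[R/2,R]$, so that every volume ratio $V_x(r_k)/V_x(R)$ is bounded by a single doubling constant.

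The iteration itself proceeds in the usual way. First, testing the subsolution inequality against $u^{q-1}\psi^2$ with a space--time cutoff $\psi$ and integrating by parts (using that $\Delta_\phi$ is $\mu$-self-adjoint) gives a Caccioppoli estimate bounding $\sup_\tau\int_{B_x(R)}(u^{q/2}\psi)^2\,d\mu+\int_Q|\nabla(u^{q/2}\psi)|^2\,d\mu\,dt$ by a constant times $\int_Q(|\nabla\psi|^2+|\pa_t\psi|)u^q\,d\mu\,dt$. Applying \eqref{lcsblv} to $w=u^{q/2}\psi$ slice-by-slice (the lower-order term $R^{-2}w^2$ is harmless because $|\nabla\psi|\gtrsim R^{-1}$), integrating in time and interpolating the $L^\infty_tL^2_x$ bound against the $L^2_tL^{2\nu/(\nu-2)}_x$ bound produces a parabolic reverse Hölder inequality with the parabolic gain exponent $\kappa:=1+\tfrac2\nu>1$: schematically, for concentric parabolic cylinders $Q'\subset Q''$,
\[
\int_{Q'}u^{q\kappa}\,d\mu\,dt\le C_q\,S_{r''}\Big(\tfrac1{(r''-r')^2}+\tfrac1{\rho''-\rho'}\Big)^{\kappa}\Big(\int_{Q''}u^{q}\,d\mu\,dt\Big)^{\kappa},
\]
where $S_{r''}=E_1\exp\!\big(D_1\sqrt{K_\eps(q,10R)}\,R\big)(r'')^2V_x(r'')^{-2/\nu}$ is the Sobolev constant and $r',r''$, $\rho',\rho''$ are the spatial radii and time-lengths of $Q',Q''$.

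Iterating this along exponents $q_k=p\kappa^k$ and cylinders $Q_k\downarrow Q_\delta$ with $r_k-r_{k+1}\sim(1-\delta)R\,2^{-k}$ and $\rho_k-\rho_{k+1}\sim(1-\delta)R^2\,2^{-k}$, taking $q_k$-th roots and letting $k\to\infty$, one obtains $\sup_{Q_\delta}u^p$ bounded by a convergent product times $\int_Q u^p\,d\mu\,dt$; the convergence rests on $\sum_{k\ge0}\kappa^{-k}=\tfrac{\nu+2}{2}$. Collecting the contributions, the curvature-free constants combine into $E_2=c_5(\nu,p,n)(b/a)^{c_6(\nu,p,n)}$, the exponentials into $\exp\!\big(D_1\sqrt{K_\eps(q,10R)}R\big)^{\nu/2}=\exp\!\big(D_2\sqrt{K_\eps(q,10R)}R\big)$ with $D_2=\tfrac\nu2D_1=\sqrt{c_4(\nu,p,n)/(a^2c)}$, the volume factors into $V_x(R)^{-1}$, and the radius/cutoff factors into $(1-\delta)^{-(2+\nu)}R^{-2}$ --- which is exactly \eqref{pbmvp}. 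The case $0<p\le1$ follows from $p=2$ by applying the just-proved estimate on a nested pair $Q_\sigma\subset Q_{\sigma'}$, bounding $\int_{Q_{\sigma'}}u^2\le(\sup_{Q_{\sigma'}}u)^{2-p}\int_Q u^p$, iterating over an increasing family of $\sigma$'s and absorbing the $\sup$-terms by Young's inequality; the surviving geometric series converges and returns \eqref{pbmvp} with $p$-dependent constants.

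The underlying scheme is classical, so the real difficulty is the constant bookkeeping, where two points need care. First, to keep the only curvature-dependent exponential equal to the one from \eqref{lcsblv}, every volume comparison invoked in the iteration must be the \emph{local} one valid on $B_q(10R)$ --- where $Ric_\phi^N\ge-K_\eps(q,10R)e^{4(\eps-1)\phi/(n-1)}g$ holds --- so that the $\exp\!\big(\sqrt{\max\{0,-K\}}\,\cdot\big)$ factor of the \emph{global} Corollary \ref{col1} and \eqref{vldb} gets replaced by an $\exp\!\big(\sqrt{K_\eps(q,10R)}\,\cdot\big)$ factor, which can be absorbed into $\exp(D_2\sqrt{K_\eps(q,10R)}R)$ with $D_2$ of the stated shape $\sqrt{c_4/(a^2c)}$. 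Second --- and this is what produces exactly the exponent $2+\nu$ --- one must check that replacing the elliptic Sobolev gain $\tfrac\nu{\nu-2}$ by the parabolic gain $\kappa=1+\tfrac2\nu$ (equivalently, running an elliptic Moser iteration in ``parabolic dimension'' $\nu+2$) makes the accumulated cutoff, radius and volume-ratio factors assemble into precisely $(1-\delta)^{-(2+\nu)}R^{-2}V_x(R)^{-1}$, uniformly in $p$.
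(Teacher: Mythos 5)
Your proposal is correct and follows essentially the same route as the paper, which proves this proposition only by invoking the Moser iteration framework of \cite{sal2,WW2} with the local Sobolev inequality \eqref{lcsblv} and volume doubling as inputs --- exactly the Caccioppoli/reverse-H\"older iteration plus Li--Schoen absorption for $0<p<2$ that you describe, including the correct care that all volume comparisons be taken in the local form so the only curvature factor is $\exp\bigl(D_2\sqrt{K_{\eps}(q,10R)}\,R\bigr)$. The only blemishes are bookkeeping-level (e.g.\ the indexing mismatch between $\sum_{k\ge0}\kappa^{-k}=\tfrac{\nu+2}{2}$ and the claim $D_2=\tfrac{\nu}{2}D_1$), which do not affect the stated form of the constants.
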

	Similarly, we can also prove that
	\begin{proposition}
		Under the same assumption as Proposition \ref{pbmvp}. Fix $0<p_0<1+\nu/2$. Then, there exist positive constants $D_3=\sqrt{{\frac{c_{7}(\nu,p_0,n)}{a^{2}c}}}$ and $E_3=c_{8}(\nu,p_0,n)(\frac{b}{a})^{c_{9}(\nu,p_0,n)}$ such that for any $s\in \mathbb{R}$ and $0<\delta<1$, and $0<p\le p_0$, any smooth positive solution $u$ of $\phi-$heat equation in cylinder $Q=B_x(R)\times (s-R^2,s)$ satisfies
		\begin{equation}
			||u||^{p}_{p_0, Q^{'}_{\delta}}\le\{\frac{E_3 \exp\left(D_3 \sqrt{K_{\eps}(q,10R)} R\right)}{(1-\delta)^{2+\nu}R^{2}V_x(R)}\}^{1-p/p_0}||u||^{p}_{p, Q},
		\end{equation}
		where $Q'_\delta=B_x(\delta R)\times (s-R^2,s-(1-\delta)R^2)$ and 
		\begin{equation}
			\nu=
			\begin{cases}
				3& \text{if } c=1, \\
				1+\frac{1}{c} & \text{if } c < 1.
			\end{cases}
		\end{equation} 
		
		On the other hand, for any $0<p<\bar{p}<\infty$ There exists positive constants $D_4=\sqrt{{\frac{c_{10}(\nu,\bar{p},n)}{a^{2}c}}}$, $E_4=c_{11}(\nu,\bar{p},n)(\frac{b}{a})^{c_{12}(\nu,\bar{p},n)}$  such that 
		\begin{equation}
			\underset{Q_{\delta}}{sup}\{u^{-p}\}\le\frac{E_4 \exp\left(D_4 \sqrt{K_{\eps}(q,10R)} R\right)}{(1-\delta)^{2+\nu}R^{2}V_x(R)}\int_{Q}u^{-p}d\mu dt.
		\end{equation}
	\end{proposition}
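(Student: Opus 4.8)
The plan is to obtain both inequalities from Moser's iteration together with the local Sobolev inequality \eqref{lcsblv}, in the same way \eqref{pbmvp} was proved; the only change is the function to which the scheme is applied.

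First I would treat the last estimate. The point is that if $u>0$ is a smooth solution of $(\partial_t-\Delta_\phi)u=0$, then for every $p>0$ the function $w:=u^{-p}$ is a nonnegative \emph{subsolution}: a direct computation gives
\begin{equation}
  (\partial_t-\Delta_\phi)\,u^{-p}=-p(p+1)\,u^{-p-2}|\nabla u|^2\le 0 .
\end{equation}
What Moser's iteration really produces — a Caccioppoli inequality for $w$ against a space-time cutoff, followed by iteration of \eqref{lcsblv} over a geometric sequence of shrinking cylinders — is precisely the mean value inequality for nonnegative subsolutions, of which \eqref{pbmvp} is just the special case of a genuine solution. Applying that machinery to $w$, with starting exponent $1$ (or directly along exponents $p'\in(0,\bar p]$), yields
\begin{equation}
  \sup_{Q_\delta}u^{-p}\le\frac{E_4\exp\!\big(D_4\sqrt{K_\eps(q,10R)}\,R\big)}{(1-\delta)^{2+\nu}R^2V_x(R)}\int_Q u^{-p}\,d\mu\,dt ,
\end{equation}
where the constants come out in the asserted form because they are assembled from the Sobolev constants $D_1,E_1$ of \eqref{lcsblv} and the doubling constant \eqref{vldb}, while $\bar p$ enters only through the admissible range of exponents in the iteration. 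There are no integrability issues, since $u$ is smooth and strictly positive on the closed cylinder, so $w$ is bounded there.

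For the first estimate I would iterate \eqref{lcsblv} starting from $L^p$ and stop once the running exponent reaches $p_0$; the hypothesis $0<p_0<1+\nu/2$ is the natural range in which this can be closed with a controlled number of steps, and interpolating between the trivial bound at exponent $p_0$ and the $L^\infty$ bound obtained by iterating all the way produces the factor $\{\cdots\}^{1-p/p_0}$. A shortcut is to interpolate \eqref{pbmvp} directly: on $Q'_\delta$ one has $\int u^{p_0}\le(\sup_{Q'_\delta}u)^{p_0-p}\int_Q u^p$, and estimating $\sup_{Q'_\delta}u$ by \eqref{pbmvp} applied with exponent $p$ on a slightly enlarged cylinder gives $\int_{Q'_\delta}u^{p_0}\le A^{(p_0-p)/p}\big(\int_Q u^p\big)^{p_0/p}$ with $A$ the constant of \eqref{pbmvp}; raising to the power $p/p_0$ gives the claim with $D_3,E_3$ of the stated form.

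I expect the main obstacle to be the bookkeeping already present behind \eqref{pbmvp}: setting up the Caccioppoli estimate on nested cylinders, choosing the geometric sequences of radii and exponents, summing the resulting geometric series, and tracking how the constants depend on $a,b,c,n$ and on $p_0$ (resp.\ $\bar p$) through $D_1,E_1$ and \eqref{vldb}. Beyond that it is a routine, if lengthy, repetition of that argument and of the analogous statements in \cite{sal2,WW2}.
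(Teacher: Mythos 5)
Your treatment of the second estimate is fine: the computation showing that $u^{-p}$ is a nonnegative subsolution of the $\phi$-heat equation is correct, and feeding it into the subsolution form of the Moser iteration behind \eqref{pbmvp} is essentially the intended argument (the route of \cite{sal2,WW2}, which iterates on negative powers of $u$ directly, amounts to the same thing, and this is also why the constants may pick up a dependence on $\bar p$).

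The first estimate is where there is a genuine gap. Your shortcut bounds $\sup_{Q'_\delta}u$ via \eqref{pbmvp}, but $Q'_\delta=B_x(\delta R)\times(s-R^2,\,s-(1-\delta)R^2)$ is anchored at the \emph{bottom} of $Q$: its time interval starts at $s-R^2$, so it is not contained in any top-anchored cylinder $Q_{\delta'}$, and points near the initial slice have no parabolic collar in the past on which $u$ is known to solve the equation. In fact no bound of the form $\sup_{Q'_\delta}u\le C\bigl(\iint_Q u^p\,d\mu\,dt\bigr)^{1/p}$ can hold: take for $u$ a heat kernel centered at $x$ and started just before time $s-R^2$; its supremum on the bottom slice blows up, while for small $p$ the quantity $\iint_Q u^p\,d\mu\,dt$ stays bounded. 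The same objection defeats the variant "interpolate with the $L^\infty$ bound obtained by iterating all the way'': there is no $L^\infty$ bound on $Q'_\delta$, which is precisely why the statement is only an $L^{p_0}$ bound with restricted $p_0$, and why it is needed in exactly this bottom-anchored form as input (together with the weak $L^1$ estimate of Lemma \ref{wkLp}) to the abstract Lemma \ref{interpolation} in the proof of Theorem \ref{pbhnk}. The argument the paper has in mind (Saloff-Coste, Theorem 5.2.17 of \cite{sal2}, and \cite{WW2}) is a finite Moser iteration on \emph{positive} powers of $u$ adapted to (super)solutions, with space-time cutoffs vanishing at the future end of the cylinder rather than at the bottom; the limited admissible range of exponents in that Caccioppoli step is what produces the restriction on $p_0$, and the power $1-p/p_0$ of the constant comes from tracking this finite iteration (equivalently, an interpolation in $p$ at fixed $p_0$), not from an interpolation against a sup bound.
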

	\begin{remark}
		The above two propositions also apply to upper solutions of the $\phi-$ heat equation.
	\end{remark}
	With previous two propositions, we will now prove the main result of this section. First, we need to establish some weighted Poincar\'e inequality. In last section, we prove the volume doubling property \eqref{vldb} and the Neumann-poincar\'e inequality \eqref{npineq}. By applying Theorem 5.3.4 from \cite{sal2}, we can assert that for a non-increasing function $\lambda:[0,\infty)\to [0,1]$ satisfies $\lambda(t)=0$ for $t>1$ and 
	\begin{equation}
		\lambda(s)=
		\begin{cases}
			1& \text{if } s\in[0,\delta], \\
			\frac{1-s}{1-\delta} & \text{if } s\in[\delta,1].
		\end{cases}
	\end{equation}
	following weighted Poincar\'e inequality holds. 
	
	\begin{lemma}
		Let $(M^n, g, \mu)$ be a weighted Riemannian manifold under lower $N-Ricci\ curvature$ bound with $\eps-range$. Assume $\phi$ satisfies 
		$$0<a\le e^{\frac{2(1-\eps)\phi(x)}{n-1}}\le b.$$ Then there exist positive constants $D_5=\sqrt{{\frac{c_{21}(\nu,n)}{a^{2}c}}}$ and $E_5=c_{22}(\nu,n,\delta)(\frac{b}{a})^{c_{23}(\nu,n)}$ such that for $\Psi(x):=\lambda(d(x,x_0)/R)$, the follwing weighted Poincar\'e inequality holds
		\begin{equation}\label{wpcr}
			\int_{E}|f-f_{\Psi}|^2\Psi d\mu\le E_{5}exp(D_5\sqrt{K_{\eps}}R)R^2\int_{E}|\nabla f|^2\Psi d\mu,
		\end{equation}
		where $f_{\Psi}=\int_{B_{x_o}(R)}f\Psi d\mu/\int_{B_{x_o}(R)}\Psi d\mu$.
	\end{lemma}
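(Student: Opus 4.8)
The goal is to deduce the weighted Poincaré inequality \eqref{wpcr} from the Neumann--Poincaré inequality of Theorem~\ref{npineq}, the volume doubling property \eqref{vldb}, and the abstract mechanism of \cite[Theorem~5.3.4]{sal2}. The plan is to verify that the hypotheses of that abstract theorem are met by $(M^n,g,\mu)$ under our curvature assumption, and then simply read off the conclusion with the correct tracking of constants in terms of $a,b,c,n,\nu$ and the curvature quantity $K_{\eps}$.

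First I would recall precisely what \cite[Theorem~5.3.4]{sal2} requires: a metric measure space satisfying (i) a volume doubling condition on all balls (possibly up to a fixed scale when $K>0$), and (ii) a family of Neumann--Poincaré inequalities on balls with a controlled constant. Our volume doubling property \eqref{vldb} supplies (i), with doubling constant $(\tfrac{b}{a})^{(1+2c)/c}2^{(1+c)/c}\exp(\sqrt{K_1/c}\,2R_1/a)$; the exponential-in-radius factor is harmless because it only affects the constant $E_5$ and the exponent $D_5$, not the structure. Theorem~\ref{npineq} (together with the $1<\theta\le 2$ variant in the following remark, which is exactly the ``enlarged ball'' form needed to run the Jerison-type chaining argument) supplies (ii). I would then invoke the theorem for the specific cutoff profile $\lambda$ defined just before the statement, so that $\Psi(x)=\lambda(d(x,x_0)/R)$ is the Lipschitz weight appearing in the conclusion; the point of using this particular $\lambda$ is that it is supported in $B_{x_0}(R)$, equals $1$ on $B_{x_0}(\delta R)$, and has $|\nabla\Psi|\le \tfrac{1}{(1-\delta)R}$, which is exactly the regularity the weighted Poincaré mechanism needs.

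The bookkeeping step is to push the constants through: the output constant of \cite[Theorem~5.3.4]{sal2} is a fixed function of the doubling constant and the Neumann--Poincaré constant. Substituting the explicit values from \eqref{vldb} and Theorem~\ref{npineq}, all the algebraic dependence collapses into a prefactor of the form $c_{22}(\nu,n,\delta)(\tfrac{b}{a})^{c_{23}(\nu,n)}$ and the curvature dependence collects into a single exponential $\exp\!\big(\sqrt{c_{21}(\nu,n)/(a^2c)}\,\sqrt{K_{\eps}}\,R\big)$, since both the doubling and Poincaré constants carry curvature only through factors of the shape $\exp(C\sqrt{K_\eps}R)$ with $C$ of the form $\mathrm{const}/(a\sqrt c)$. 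This is where one must be a little careful: the radius appearing inside $K_{\eps}(o,\theta R)$ for the various balls in the chaining argument ranges over a bounded multiple of $R$ (at most $10R$, as in \eqref{lcsblv}), so one absorbs it by writing $K_{\eps}$ for $K_{\eps}(x_0, cR)$ with a universal $c$, which is the convention implicit in the statement. Apart from keeping this uniform-scale convention straight, there is no genuine obstacle; the content is entirely in Theorem~\ref{npineq} and \eqref{vldb}, and this lemma is a packaging step, so I would keep the proof to a few lines citing \cite[Theorem~5.3.4]{sal2} and displaying the resulting constants.
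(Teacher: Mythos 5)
Your proposal is correct and follows essentially the same route as the paper: the paper also deduces the lemma from the volume doubling property \eqref{vldb} and the Neumann--Poincar\'e inequality of Theorem~\ref{npineq} via the mechanism of \cite[Theorem 5.3.4]{sal2}, merely sketching the Whitney covering decomposition, the chaining estimate between adjacent balls, and the maximal inequality in order to track how the curvature enters through factors of the form $\exp(C\sqrt{K_\eps}R)$. Your remarks on absorbing the varying radii in $K_\eps(\cdot,\theta R)$ and on collapsing the constants into $E_5$ and $D_5$ match the paper's treatment.
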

	\begin{proof}[(sketch of the proof):]
		First, the volume doubling property indicates that for any fixed ball $E=B(x_0, R)$, the Whitney type coverring $F(B)$ of $E$ exists, as shown in \cite{sal2}. Next, we have the following inequality:
		\begin{equation}\label{T1T2}
			\int_{E}|f-f_{4B_0}|^2\Psi d\mu\le 4\sum_{B\in F}\int_{4B}|f-f_{4B}|^2\Psi d\mu+4\sum_{B\in F}\int_{4B}|f_{4B}-f_{4B_0}|^2\Psi d\mu\equiv T_1+T_2,
		\end{equation}
		where $B_0\in F(B)$ is a ball containing $x_0$. 
		Denotes $\Psi(S)\equiv \int_{S}\Psi d\mu$ for any measurable set in $E$, there exists $C_4$ depends on $n\ \text{and}\ \beta$ such that 
		$$\frac{\Psi(B)}{V(B)}\le C_4\Psi(x),\ \ \ x\in E.$$
		It is straightforward to verify that 
		$$T_1\le C_1exp(C_2\sqrt{K_{\eps}}R)R^2\int_{E}|\nabla f|^2\Psi d\mu,$$
		where $C_1=c_{12}(\nu, n, \delta)(\frac{b}{a})^{c_{13}(\nu,n)}$, $C_2=\sqrt{{\frac{c_{14}(\nu,n)}{a^{2}c}}}$. 
		
		In addition, consider two adjacent balls $B_i, B_{i+1}\in F(B)$. Using inequality \eqref{npineq}, we have
		\begin{equation}\label{ineqcb}
			|f_{4B_i}-f_{4B_{i+1}}|^2\le C_3exp(\sqrt{\frac{K_{\eps}(q,2R)}{c}}\frac{2R}{a})\frac{r^2(B_i)}{V(B_i)}\int_{32B_{i}}|\nabla f|^2d\mu,
		\end{equation}
		where $4B_i$ denotes the ball with the same center as $B_i$ but with four times the radius. Here, $C_3=c_{14}(\nu, n)(\frac{b}{a})^{c_{15}(\nu,n)}$. 
		
		Following the same way as section $5.3.4$ in \cite{sal2}, we can also derive the maximal inequality as 
		\begin{equation}\label{maxineq}
			||Mf||_p\le C_5exp(C_2\sqrt{K_{\eps}}R)||f||_p,
		\end{equation}
		where $C_5=c_{16}(\nu, n)(\frac{b}{a})^{c_{17}(\nu,n)}$ and $Mf(x)=\underset{0<r\le R}{sup} \frac{\int_{B_x(r)}fd\mu}{V_x(r)}$.
		
		With inequalities \eqref{ineqcb} and \eqref{maxineq}, following the same argument in section $5.3.5$ in \cite{sal2}, we can derive the estimate of $T_2$ as 
		\begin{equation}
			T_2\le C_6exp(C_7\sqrt{K_{\eps}}R)R^2\int_{E}|\nabla f|^2\Psi d\mu,
		\end{equation}
		where $C_6=c_{18}(\nu, n,\delta)(\frac{b}{a})^{c_{19}(\nu,n)}$, $C_7=\sqrt{{\frac{c_{20}(\nu,n)}{a^{2}c}}}$.
		
		Finally, we get the weighted Poincar\'e inequality as 
		\begin{equation}
			\int_{E}|f-f_{\Psi}|^2\Psi d\mu\le\int_{E}|f-f_{4B_0}|^2\Psi d\mu\le E_{5}exp(D_5\sqrt{K_{\eps}}R)R^2\int_{E}|\nabla f|^2\Psi d\mu.
		\end{equation}
	\end{proof}
	Secondly, with the weighted poincar\'e inequality \eqref{wpcr}, we will derive the estimate of the weak $L^1(\mu)$ bound of $log u$, where $u$ is a positive solution of $\phi-$heat equation. In the following, the notation $d\bar{\mu}=d\mu dt$ denotes the measure of space-time.
	\begin{lemma}\label{wkLp}
		Let $(M^n, g, \mu)$ be a weighted Riemannian manifold under lower $N-Ricci\ curvature$ bound with $\eps-range$. Assume $\phi$ satisfies 
		$$0<a\le e^{\frac{2(1-\eps)\phi(x)}{n-1}}\le b.$$ Fix $s\in\mathbb{R}$ and $\delta,\ \rho\in (0,1)$. Denote 
		$$R_{+}=B_{x_0}(\delta R)\times[t_0-\rho R^2,\ t_0],\ R_{-}=B_{x_0}(\delta R)\times[t_0-R^2,\ t_0-\rho R^2].$$ Then for any smooth positive solution $u$ of $\phi-$heat equation and $\lambda>0$, we have 
		$$\bar{\mu}(\{(x,\ t)\in R_{-}|\ log\ u>\lambda-k\})\le V_1\lambda^{-1},$$
		$$\bar{\mu}(\{(x,\ t)\in R_{+}|\ log\ u^{-1}>k+\lambda\})\le V_1\lambda^{-1},$$ 
		where $V_{1}=2E_{5}exp(D_5\sqrt{K_{\eps}}R)R^2V(B_{x_0}(R))$.
	\end{lemma}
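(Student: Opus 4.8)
The plan is to follow the classical John--Nirenberg type argument due to Moser, adapted to the weighted setting, exactly as in Section~5.4 of \cite{sal2} and in \cite{WW2}. The starting point is the standard logarithmic test function computation: if $u$ is a smooth positive solution of $(\partial_t-\Delta_\phi)u=0$ on the cylinder $B_{x_0}(R)\times(t_0-R^2,t_0)$, then $w=\log u$ satisfies $\partial_t w=\Delta_\phi w+|\nabla w|^2$. Plugging the test function $\psi^2$ with $\psi=\Psi(x)=\lambda(d(x,x_0)/R)$ (time-independent) into the weak formulation and integrating against $d\mu$ over $B_{x_0}(R)$ gives, after the usual Cauchy--Schwarz absorption,
\begin{equation*}
\frac{d}{dt}\int_{B_{x_0}(R)} w\,\Psi^2\,d\mu + \frac12\int_{B_{x_0}(R)}|\nabla w|^2\Psi^2\,d\mu \le 2\int_{B_{x_0}(R)}|\nabla\Psi|^2\,d\mu \le \frac{C\,V(B_{x_0}(R))}{(1-\delta)^2R^2},
\end{equation*}
where the last bound uses $|\nabla\Psi|\le \frac{1}{(1-\delta)R}$ and $\supp\Psi\subset B_{x_0}(R)$. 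This is the one place the curvature/volume hypotheses enter only through the measure of the ball; no comparison theorem is needed here beyond what is already available.

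Next I would introduce $W(t)=\bigl(\int\Psi^2\,d\mu\bigr)^{-1}\int w(\cdot,t)\Psi^2\,d\mu$, the $\Psi^2$-weighted average of $\log u$ at time $t$, and set $k$ to be a suitable value of $W$ at the time level $t_0-\rho R^2$ (this is the constant $k$ in the statement, which is why it is common to both one-sided estimates). Applying the weighted Poincar\'e inequality \eqref{wpcr} with $f=w(\cdot,t)$ converts $\int|\nabla w|^2\Psi^2\,d\mu$ into $\bigl(E_5\exp(D_5\sqrt{K_\eps}R)R^2\bigr)^{-1}\int|w-W|^2\Psi^2\,d\mu$, and substituting into the differential inequality yields a closed ODE differential inequality of the form $W'(t)+\kappa\,\overline{(w-W)^2}\le \kappa'$ with $\kappa\sim \bigl(E_5\exp(D_5\sqrt{K_\eps}R)R^2\bigr)^{-1}$. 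From here one runs Moser's lemma (Lemma~2.2.6 / the argument preceding Theorem~5.4.11 in \cite{sal2}): for the forward-in-time region $R_-$ one estimates, for $\lambda>0$,
\begin{equation*}
\bar\mu\bigl(\{(x,t)\in R_-:\log u>\lambda-k\}\bigr)=\bar\mu\bigl(\{(x,t)\in R_-: w-W(t_0-\rho R^2)>\lambda-\text{(drift)}\}\bigr),
\end{equation*}
and controls the drift term $W(t)-k$ by $\kappa'\cdot(t-(t_0-\rho R^2))$ together with a Chebyshev estimate on $\overline{(w-W)^2}$ integrated in time; the key bookkeeping is that the time length of $R_-$ is $(1-\rho)R^2$ and $\int\Psi^2 d\mu\le V(B_{x_0}(R))$, so the total measure bound comes out as $V_1\lambda^{-1}$ with $V_1=2E_5\exp(D_5\sqrt{K_\eps}R)R^2V(B_{x_0}(R))$. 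The backward-in-time region $R_+$ is handled identically after replacing $u$ by $u^{-1}$ (equivalently $w$ by $-w$ and reversing time), which is why $\log u^{-1}>k+\lambda$ appears and the same $k$ works.

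The main obstacle is not the analysis itself, which is a faithful transcription of Moser's iteration, but making sure all the weighted ingredients line up correctly: one must check that $\Delta_\phi$'s self-adjointness with respect to $d\mu$ makes the integration by parts in the weak formulation go through with no extra weight terms, that the cutoff $\Psi$ is an admissible test function supported in $B_{x_0}(R)$ so that \eqref{wpcr} applies on $E=B_{x_0}(R)$, and that the constants $D_5$ and $E_5$ (hence the exponential factor $\exp(D_5\sqrt{K_\eps}R)$) propagate through the ODE step without picking up an additional dependence on $\delta,\rho$ beyond what is absorbed into the universal constant. A secondary technical point is justifying the manipulations for merely smooth positive solutions (local integrability of $\log u$ and of $|\nabla\log u|^2\Psi^2$, differentiability in $t$ of $W$), which is standard by parabolic regularity. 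Once these are in place the constant $V_1$ and the two weak-$L^1(\bar\mu)$ inequalities follow exactly as stated.
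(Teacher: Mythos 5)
Your proposal follows essentially the same route as the paper: take the logarithm of the solution, test against the square of the cutoff $\Psi$, absorb by Cauchy--Schwarz, insert the weighted Poincar\'e inequality \eqref{wpcr}, and then run Moser's level-set argument around the weighted average at the intermediate time $t_1=t_0-\rho R^2$, exactly as in \cite{sal2,WW2}; the paper carries out the identical computation for $\eta=-\log u$. Two small slips: your displayed differential inequality has the wrong sign for $w=\log u$ (since $\partial_t w=\Delta_\phi w+|\nabla w|^2$, one gets $\tfrac{d}{dt}\int w\Psi^2\,d\mu\ \ge\ \tfrac12\int|\nabla w|^2\Psi^2\,d\mu-2\int|\nabla\Psi|^2\,d\mu$; the inequality as written is the one satisfied by $-\log u$, which is the function the paper works with, and accordingly the paper's $k$ is the drift-corrected $\omega^2$-average of $-\log u$ at $t_1$), and your forward/backward labels for $R_{\pm}$ are interchanged, although you do attach the two level-set estimates to the correct regions. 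Note also that the $\lambda^{-1}$ decay is not obtained from a plain Chebyshev bound on the time integral of $\overline{(w-W)^2}$: as in the paper (and in the lemma of \cite{sal2} you cite), one uses that on the level set the deviation $|w-W|$ dominates the drift-corrected deficit $|g(t)|$ and integrates the resulting Riccati-type inequality $\dot g+V_1^{-1}\mu(D_t(\lambda))g^2\le 0$ from $t_1$ to $t_0$; with these sign and bookkeeping corrections your outline coincides with the paper's proof.
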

	\begin{proof}
		In the above lemma, we are discussing a solution of a fixed ball $B_{x_0}(R)$, so without loss of generality, we can assume $u$ is a positive solution. Let $\eta:= -\log u$, it is easy to check that 
		$$\Delta_\phi \eta-\pa_t\eta-|\nabla \eta|^2= 0.$$ 
		Then for any nonnegative function $\omega\in C_0(B_{x_0}(R))$, we have
		
		\begin{equation}
			\begin{aligned}
				\pa_t\int_{B_{x_o}(R)}\omega^2\eta d\mu&=\int_{B_{x_o}(R)}\omega^2(\Delta_\phi\eta-|\nabla \eta|^2) d\mu\\
				&=\int_{B_{x_o}(R)}-2\omega\Braket{\nabla\eta, \nabla\omega}-\omega^2|\nabla \eta|^2 d\mu\\
				&\le 2\int_{B_{x_o}(R)}|\nabla \omega|^2 d\mu-1/2\int_{B_{x_o}(R)}|\nabla \eta|^2\omega^2d\mu.
			\end{aligned}
		\end{equation}
		Let $\omega=\lambda(d(x_0,x)/R)$, where $\lambda:[0,1]\to[0,1]$ defined as
		\begin{equation}
			\lambda(s)=
			\begin{cases}
				1& \text{if } s\in[0,\delta], \\
				\frac{1-s}{1-\delta} & \text{if } s\in[\delta,1].
			\end{cases}
		\end{equation} 
		Then with weighted poincar\'e inequality \eqref{wpcr}, we can derive that
		\begin{equation}\label{ineqA}
			\pa_t\int_{B_{x_o}(R)}\omega^2\eta d\mu+(2E_{5}(D_5\sqrt{K_{\eps}}R)R^2)^{-1}\int_{E}|\eta-\eta_{\omega^2}|^2\omega^2 d\mu\le\frac{2V(B_{x_0}(R))}{[(1-\delta)R]^2}.
		\end{equation}
		For fixed $\delta$, $\int_{B_{x_o}(R)}\omega^2 d\mu=c_{21}(\delta)V(B_{x_0}(R))$, then inequality \eqref{ineqA} can be transform to
		\begin{equation}
			\pa_t\eta_{\omega^2}+(2E_{5}(D_5\sqrt{K_{\eps}}R)R^2V(B_{x_0}(R)))^{-1}\int_{E}|\eta-\eta_{\omega^2}|^2\omega^2 d\mu\le\frac{2}{[(1-\delta)R]^2}.
		\end{equation}
		Fixed $t_1=t_0-\rho R^2$ for some constant $\rho\in (0,1)$ and denote $\bar {\eta}=\eta-V_2(t-t_1)$, where $V_{1}=2E_{5}exp(D_5\sqrt{K_{\eps}}R)R^2V(B_{x_0}(R))$ and $V_2=\frac{2}{[(1-\delta)R]^2}$. Then we can obtain that 
		\begin{equation}
			\pa_t\bar{\eta}_{\omega^2}+(V_1)^{-1}\int_{E}|\eta-\eta_{\omega^2}|^2\omega^2 d\mu\le 0.
		\end{equation}
		Let $k=\bar{\eta}_{\omega^2}(t_1)$, for fixed $\lambda>0$ and $t\in [t_0-r^2, t_0]$, denote 
		$D^{+}_{t}(\lambda)=\{x\in B_{x_0}(R)|\ \bar{\eta}(x,t)>k+\lambda\}$ and $D^{-}_{t}(\lambda)=\{x\in B_{x_0}(R)|\ \bar{\eta}(x,t)<k-\lambda\}$.
		
		If $t>t_1$, then 
		$$\bar{\eta}(x,t)-\bar{\eta}_{\omega^2}(t)>k+\lambda-\bar{\eta}_{\omega^2}(t)\ge\lambda$$
		in $D^{+}_{t}(\lambda)$, since $\pa_t\bar{\eta}_{\omega^2}(t)\le 0$. Denote $g(t)=\bar{\eta}_{\omega^2}-(\lambda+a)$, we have
		\begin{equation}\label{ineq2}
			\dot{g}(t)+V_{1}^{-1}\mu(D^{+}_{t}(\lambda))g^2(t)\le 0.
		\end{equation}
		We know that $g(t_1)=-\lambda<0$, which means $g(t)<0$ when $t\ge t_1$. Then integral inequality  \eqref{ineq2} we have
		$$\int_{t_1}^{t_0}\mu(D^{+}_{t}(\lambda))dt\le V_{1}\lambda^{-1}.$$
		We know that $\bar {\eta}=\eta-V_2(t-t_1)$, then we can prove that 
		$$\bar{\mu}(\{(x,\ t)\in R_{+}|\ log\ u^{-1}>a+\lambda\})\le V_1\lambda^{-1}$$ 
		because $0<V_2(t-t_1)\le \frac{\rho}{1-\delta^2}$.
		
		Similarly, if $t<t_1$, then we have 
		$$\bar{\eta}(x,t)-\bar{\eta}_{\omega^2}(t)<a-\lambda-\bar{\eta}_{\omega^2}(t)\le-\lambda$$
		in $D^{-}_{t}(\lambda)$. Follow the same argument we can prove that 
		$$\bar{\mu}(\{(x,\ t)\in R_{-}|\ log\ u>\lambda-a\})\le V_1\lambda^{-1}.$$ 
	\end{proof}
	Thirdly, we need an interpolation inequality of $L^p(\mu)$ norm, which is mentioned in \cite{WW2}.
	\begin{lemma}\label{interpolation}
		Let $\gamma$, $C$, $1/2\le\delta<1$, $p_1<p_0\le\infty$ be positive constants, $R_{\sigma}$ is a measurable set in $M\times\mathbb{R}$ satisfies $R_{\sigma'}\subset R_{\sigma}$ if $\sigma\ge\sigma'$. Consider a positive smooth function $\varphi$ on $R_1$ satisfies
		$$||\varphi||_{p_0,\ R_{\sigma'}}\le \{C(\sigma-\sigma')^{-\gamma}\bar{\mu}^{-1}(R_1)\}^{1/p-1/p_0}||\varphi||_{p,\ R_{\sigma}}$$
		and 
		$$\bar{\mu}(\{z\in R_1,\ log\varphi>\lambda\})\le C\bar{\mu}(R_1)\lambda^{-1}$$
		for $\sigma,\ \sigma',\ p,\ \lambda$ satisfying $1/2\le\delta<\sigma'\le\sigma\le 1$, $0<p\le p_1<p_0$ and $\lambda>0$, then we have 
		$$||\varphi||_{p_0,\ R_{\delta}}\le (\bar{\mu}(R_1))^{1/p_0}e^{C_8(1+C^3)},$$
		where $C_8=C_8(\gamma,\ \delta,\ min\{1/p_1-1/p_0\}).$
	\end{lemma}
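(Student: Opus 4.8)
This is the Bombieri--Giusti iteration lemma, and the plan is a Moser-type iteration that combines the reverse H\"older bound (first hypothesis) with the weak-$L^1$ bound on $\log\varphi$ (second hypothesis). First I would normalize: rescaling the measure we may assume $\bar\mu(R_1)=1$, the factor $\bar\mu(R_1)^{1/p_0}$ being restored at the end. Since $\varphi$ is positive and smooth, hence bounded on each compact $R_\sigma$, we may further assume a priori that $\|\varphi\|_{p_0,R_1}<\infty$; if not, run the whole argument with $\min(\varphi,k)$ in place of $\varphi$ — which still satisfies both hypotheses with the same $C$ — and let $k\to\infty$ at the end. Fix the lacunary radii $\sigma_k=\delta+(1-\delta)(1-2^{-k})$, so that $\sigma_0=\delta$, $\sigma_k\nearrow 1$, and $\sigma_{k+1}-\sigma_k=(1-\delta)2^{-k-1}$.

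The second hypothesis is used only to control low $L^p$-norms. For $0<q\le p_1$ and $\delta\le\sigma\le1$, write $\int_{R_\sigma}\varphi^q\,d\bar\mu=q\int_0^\infty s^{q-1}\bar\mu(\{\varphi>s\})\,ds$ and split the range of $s$: for $s\le e^{2C}$ use $\bar\mu(\{\varphi>s\})\le\bar\mu(R_1)=1$; for $e^{2C}<s\le\|\varphi\|_{p_0,R_\sigma}$ use $\bar\mu(\{\varphi>s\})=\bar\mu(\{\log\varphi>\log s\})\le C/\log s\le\tfrac12$; and for $s>\|\varphi\|_{p_0,R_\sigma}$ use Chebyshev against the $L^{p_0}$-norm (for $p_0=\infty$ this last range is empty). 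Collecting terms,
\[
\int_{R_\sigma}\varphi^q\,d\bar\mu\ \le\ e^{2Cq}+\Big(\tfrac12+\tfrac{q}{p_0-q}\Big)\,\|\varphi\|_{p_0,R_\sigma}^{q},
\]
so a low-exponent norm is dominated by a $C$-dependent constant plus the full $L^{p_0}$-norm on the same set.

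Then I would run the iteration. Insert the displayed estimate into the reverse H\"older hypothesis with $\sigma'=\sigma_k$, $\sigma=\sigma_{k+1}$ and a small exponent $q$, and use the interpolation inequality $\|\varphi\|_{q,R}\le\|\varphi\|_{q',R}^{1-\lambda}\|\varphi\|_{p_0,R}^{\lambda}$ (with $0<q'<q$ and $\lambda=\tfrac{1/q'-1/q}{1/q'-1/p_0}\in(0,1)$) on $R=R_{\sigma_{k+1}}$; arranging these in the correct order produces a one-step inequality in which $\|\varphi\|_{p_0,R_{\sigma_{k+1}}}$ appears with an exponent $\theta<1$, the $e^{2C}$-terms and the occurring powers of $C(\sigma_{k+1}-\sigma_k)^{-\gamma}$ being absorbed into an explicit coefficient $A_k$. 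Iterating upward, $\|\varphi\|_{p_0,R_{\sigma_k}}$ enters with exponent $\theta^k\to0$; since $\|\varphi\|_{p_0,R_{\sigma_k}}\le\|\varphi\|_{p_0,R_1}<\infty$ that factor tends to $1$, while $\prod_jA_j^{\theta^j}$ converges because the $2^{j\gamma}$-type growth of $A_j$ is overwhelmed by $\theta^j$. Summing the resulting geometric series gives an explicit bound in $C,\gamma,\delta,q,q'$; the powers of $C$ enter through the $1/q$-exponents in the reverse H\"older, the $e^{2C}$ thresholds from the log bound, and the geometric sum of scale factors, and choosing $q,q'$ in terms of $p_1,p_0$ and tracking these honestly yields $\|\varphi\|_{p_0,R_\delta}\le\exp\big(C_8(1+C^3)\big)$ with $C_8=C_8(\gamma,\delta,1/p_1-1/p_0)$; reinstating the normalization produces the factor $\bar\mu(R_1)^{1/p_0}$. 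The main obstacle is precisely this step: the weak-$L^1$ log bound by itself controls $\|\varphi\|_q$ only by a term \emph{linear} in $\|\varphi\|_{p_0}$, so the damping exponent $\theta<1$ must be manufactured by combining it with interpolation and the reverse H\"older in the right order, after which the $C$-bookkeeping has to be pushed through to land exactly on the cube; this is the content of the classical Bombieri--Giusti lemma, which one may alternatively quote from \cite{sal2}.
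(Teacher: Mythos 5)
Your proposal stalls exactly at the step you yourself flag as ``the main obstacle,'' and that step is not a bookkeeping matter but the whole content of the lemma. The ingredients you list for the one-step estimate --- the reverse H\"older hypothesis with a \emph{fixed} small exponent $q$, the interpolation inequality $\|\varphi\|_{q,R}\le\|\varphi\|_{q',R}^{1-\lambda}\|\varphi\|_{p_0,R}^{\lambda}$ with fixed $q'<q$, and your (correct) distribution-function estimate $\int_{R_\sigma}\varphi^{q}\,d\bar\mu\le e^{2Cq}+(\tfrac12+\tfrac{q}{p_0-q})\|\varphi\|_{p_0,R_\sigma}^{q}$ --- are all positively homogeneous of degree one in $\varphi$ except for the additive constant $e^{2Cq}$. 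Consequently, in the only nontrivial regime $\|\varphi\|_{p_0,R_{\sigma_{k+1}}}\gtrsim e^{2C}$, any arrangement of these inequalities produces
\begin{equation*}
\|\varphi\|_{p_0,R_{\sigma_k}}\ \le\ A_k\,\|\varphi\|_{p_0,R_{\sigma_{k+1}}},\qquad A_k\ \ge\ \bigl[C(\sigma_{k+1}-\sigma_k)^{-\gamma}\bigr]^{1/q-1/p_0}\ >\ 1,
\end{equation*}
i.e.\ total exponent exactly $1$ (the interpolation merely splits $1=(1-\lambda)+\lambda$), not the damping exponent $\theta<1$ you assert; the product $\prod_k A_k$ diverges and the iteration does not close. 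The genuine mechanism in the Bombieri--Giusti lemma is that the exponent $p$ in the reverse H\"older hypothesis and the level $\lambda$ in the weak $\log$-bound are chosen \emph{depending on the size of} $\|\varphi\|_{p_0}$ (roughly $\lambda\sim\log\|\varphi\|_{p_0,R_{\sigma'}}$ and $1/p\sim\log\|\varphi\|_{p_0,R_\sigma}/\log(\lambda/C)$), which converts the scheme into a contraction for $\log\|\varphi\|_{p_0,R_{\sigma_k}}$ valid above an explicit threshold in $C,\gamma,\sigma_{k+1}-\sigma_k$; below the threshold one already has the absolute bound. That size-dependent choice of exponent, which is what breaks the homogeneity, is absent from your outline. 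A secondary point: your reduction to bounded $\varphi$ via $\min(\varphi,k)$ is not justified, since the reverse H\"older hypothesis is an assumption on $\varphi$ and is not inherited by its truncations (the right-hand norm only decreases under truncation).

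For comparison, the paper does not prove this lemma at all: it imports it from \cite{WW2}, where it is the classical Bombieri--Giusti abstract lemma (see also \cite{sal2}). So your closing fallback --- quoting \cite{sal2} --- is in line with what the paper actually does; but as a self-contained argument, the proposal is missing the decisive idea and the iteration as described would fail.
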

	It is easy to check that a positive solution of $\phi-$heat equation on $M^n$ satisfies all conditions outlined in the previous lemma, then we can finish the proof of the parabolic mean value inequality.
	\begin{theorem}\label{pbhnk}
		Let $(M^n, g, \mu)$ be a weighted Riemannian manifold under lower $N-Ricci\ curvature$ bound with $\eps-range$. Assume $\phi$ satisfies 
		$$0<a\le e^{\frac{2(1-\eps)\phi(x)}{n-1}}\le b.$$ Denote 
		$$Q_{+}=B_{x_0}( \delta R)\times[t_0-\eps R^2,\ t_0],\ Q_{-}=B_{x_0}(\delta R)\times[t_0-\varsigma R^2,\ t_0-\rho R^2],$$
		where $0<\eps<\rho<\varsigma\le 1$ and $R^2\le t_0$. Then, for any positive solution $u$ of $\phi-$heat equation, the following inequality holds 
		\begin{equation}
			\underset{ Q_{-}}{sup}\ u\le e^{2C_8(1+C_{9}^3)}\underset{ Q_{+}}{inf}\ u,
		\end{equation}
		where $C_8=c_{21}(\nu,\ \eps,\ \varsigma,\ \rho)$ and $C_{9}=2E_{5}exp(D_5\sqrt{K_{\eps}}R)$.
	\end{theorem}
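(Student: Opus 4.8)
\emph{Proof idea.} The plan is to run Moser's iteration in its Bombieri--Giusti packaging. The two mean value / reverse-H\"older inequalities of this section provide the quantitative ``$L^p\to L^\infty$'' input, the weak $L^1$ bound on $\log u$ of Lemma~\ref{wkLp} provides the BMO-type input, and the abstract interpolation Lemma~\ref{interpolation} combines them --- once on the past cylinder applied to $u$, once on the future cylinder applied to $u^{-1}$. First I would fix the constant $k$ produced by Lemma~\ref{wkLp}; the essential point is that this \emph{same} $k$ bounds $\log u$ from above on $R_-$ and $\log u^{-1}$ from above on $R_+$. Replacing $u$ by $v:=e^{-k}u$, the two estimates of Lemma~\ref{wkLp} read $\bar\mu(\{z\in R_-:\ \log v>\lambda\})\le V_1\lambda^{-1}$ and $\bar\mu(\{z\in R_+:\ \log v^{-1}>\lambda\})\le V_1\lambda^{-1}$ with $V_1=2E_5\exp(D_5\sqrt{K_\eps}R)R^2V(B_{x_0}(R))$. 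Since the space-time volume of a parabolic cylinder of radius $R$ about $x_0$ is comparable, up to a factor depending only on $\delta,\rho,\varsigma$, to $R^2V(B_{x_0}(R))$, the renormalized constant $V_1/\bar\mu(R_1)$ is comparable to $C_9=2E_5\exp(D_5\sqrt{K_\eps}R)$, which will play the role of the constant ``$C$'' in Lemma~\ref{interpolation}.

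For the upper bound on the past cylinder, $v$ is still a positive solution, so I would apply Lemma~\ref{interpolation} to $\varphi=v$ over a nested family of sub-cylinders decreasing to $Q_-$, taking as the reverse-H\"older hypothesis the inequality $\|u\|_{p_0,Q'_\delta}^p\le\{E_3\exp(D_3\sqrt{K_\eps}R)(1-\delta)^{-(2+\nu)}R^{-2}V_x(R)^{-1}\}^{1-p/p_0}\|u\|_{p,Q}^p$ for a fixed $p_0\in(0,1+\nu/2)$, and as the logarithmic-decay hypothesis the first estimate of Lemma~\ref{wkLp}. Lemma~\ref{interpolation} then yields an $L^{p_0}$ bound $\|v\|_{p_0,Q''_-}\le\bar\mu(Q)^{1/p_0}\,e^{c_8(1+C_9^3)}$ on a cylinder $Q''_-$ slightly larger than $Q_-$, and one last application of the parabolic mean value inequality \eqref{pbmvp} upgrades it to $\sup_{Q_-}v\le e^{c_8(1+C_9^3)}$, hence $\sup_{Q_-}u\le e^k e^{c_8(1+C_9^3)}$. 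Every geometric constant produced en route (the $E_i$, $D_i$ and the volume factors) is either a pure function of $\nu$ and the cylinder parameters or a weight of the form $\exp(\mathrm{const}\cdot\sqrt{K_\eps(q,10R)}R)$, so all of them get absorbed into one constant $c_8=c_{21}(\nu,\eps,\varsigma,\rho)$ and one exponential.

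For the lower bound on the future cylinder, one uses that $u^{-1}$ is a subsolution of the $\phi$-heat equation, $(\partial_t-\Delta_\phi)u^{-1}=-2u^{-3}|\nabla u|^2\le0$, so its mean value inequality holds at \emph{all} exponents $p>0$; this is exactly the bound $\sup_{Q_\delta}u^{-p}\le E_4\exp(D_4\sqrt{K_\eps}R)(1-\delta)^{-(2+\nu)}R^{-2}V_x(R)^{-1}\int_Q u^{-p}\,d\mu\,dt$, which is precisely the reverse-H\"older hypothesis of Lemma~\ref{interpolation} in the limiting case $p_0=\infty$. Applying Lemma~\ref{interpolation} to $\varphi=v^{-1}$ over sub-cylinders decreasing to $Q_+$, with this hypothesis and the second estimate of Lemma~\ref{wkLp}, gives at once $\sup_{Q_+}v^{-1}\le e^{c_8(1+C_9^3)}$, that is $\inf_{Q_+}u\ge e^k e^{-c_8(1+C_9^3)}$. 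Chaining the two bounds eliminates $e^k$ and yields $\sup_{Q_-}u\le e^{2c_8(1+C_9^3)}\inf_{Q_+}u$, which is the claim with $C_8=c_8$ and $C_9=2E_5\exp(D_5\sqrt{K_\eps}R)$.

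I expect the difficulty to be bookkeeping rather than conceptual. One must choose the chain of cylinders $Q_+\subset R_+$, $Q_-\subset R_-$ and the intermediate cylinders of Lemma~\ref{interpolation} consistently with the constraints $0<\eps<\rho<\varsigma\le1$ on the time parameters and with the dilation factor $\delta$ in space; one must check that $u^{-1}$ genuinely satisfies the hypotheses of the negative-power mean value inequality and of Lemma~\ref{interpolation} (in particular that $1/p_1-1/p_0=1/p_1$ remains bounded away from $0$ when $p_0=\infty$); and --- the most delicate point --- one must verify that every constant generated from the local Sobolev inequality \eqref{lcsblv} through the mean value inequalities and the weak $L^1$ estimate collapses into the single exponential $e^{2C_8(1+C_9^3)}$, so that $C_8$ really depends only on $\nu,\eps,\varsigma,\rho$ and $C_9$ has the stated form.
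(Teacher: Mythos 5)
Your proposal follows essentially the same route as the paper's own proof: normalize $u$ by the constant $k$ from Lemma~\ref{wkLp}, feed the mean value/reverse-H\"older inequalities of this section and the weak $L^1$ bounds on $\log u$ into the abstract interpolation Lemma~\ref{interpolation} (once for the normalized $u$ on the past cylinder, once for its reciprocal on the future cylinder), upgrade to sup bounds via the mean value inequality, and chain the two so that $e^{k}$ cancels. The only slip is the normalization sign --- Lemma~\ref{wkLp} calls for $e^{k}u$ rather than your $e^{-k}u$ --- which is immaterial since the factor cancels in the final chaining, exactly as in the paper.
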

	
	\begin{proof}
		Denote 
		$$Q_{+,\ \sigma}=B_{x_0}(\sigma R)\times [t_0-l_1(\sigma)\eps R^2,\ t_0]$$ 
		and 
		$$Q_{-,\ \sigma}=B_{x_0}(\sigma R)\times [t_0-l_2(\sigma)\varsigma R^2,\ t_0-\rho R^2],$$ 
		where $l_1$ and $l_2$ are two linear function satisfy 
		$$l_1(\delta)=l_2(1)\varsigma=l_2(\delta)=1,\  l_1(\delta)\eps=\rho.$$
		With mean value inequality \eqref{pbmvp} and Lemma \ref{wkLp}, it is easy to check that $f=e^{k}u$ satisies the condition in Lemma
		\ref{interpolation} for $(x,s)\in Q_{-,\ \sigma}$, where $k$ is same as that in Lemma \ref{wkLp}. Then we can obtain that 
		$$e^{k}||u||_{p_0,\ Q_{-},\frac{1+\delta}{2}}\le (\bar{\mu}(R_1))^{1/p_0}e^{C_8(1+C_{9}^3)}.$$
        By proposition \ref{pbmvp} ,
        \begin{equation}     
        \begin{aligned}
        e^{k}\underset{ Q_{-}}{sup}\ u &\le e^k\left(\frac{E_2 \exp\left(D_2 \sqrt{K_{\eps}(q,10R)} R\right)}{(1-\delta)^{2+\nu}R^{2}V_x(R)}\right)^{\frac{1}{p_0}}(\bar{\mu}(R_1))^{1/p_0}e^{C_8(1+C_{9}^3)}\\
        &=\left(\frac{E_2 \exp\left(D_2 \sqrt{K_{\eps}(q,10R)} R\right)}{(1-\delta)^{2+\nu}}\right)^{\frac{1}{p_0}}e^{C_8(1+C_{9}^3)+k}
         \end{aligned}
        \end{equation}
		Following same argument we can prove that
		$$e^{-k}\underset{ Q_{+}}{sup}\ u^{-1}\le e^{C_8(1+C_{9}^3)}.$$
		Finally, with last two inequality, following parabolic Harnack inequality holds.
		$$\underset{ Q_{-}}{sup}\ u\le C_{10}e^{2C_8(1+C_{9}^3)}\underset{ Q_{+}}{inf}\ u,$$
		where $C_8=c_{21}(\nu,\ \eps,\ \varsigma,\ \rho)$ , $C_{9}=2E_{5}exp(D_5\sqrt{K_{\eps}}R)$ and $C_{10}=\left(\frac{E_2 \exp\left(D_2 \sqrt{K_{\eps}(q,10R)} R\right)}{(1-\delta)^{2+\nu}}\right)^{\frac{1}{p_0}}$.
	\end{proof}
	
	\section{Upper and Lower Bounds of $\phi-$Heat Kernel}
	In this section, following the same argument in \cite{sal1}, we will derive the Gaussian upper bound of the $\phi-$heat kernel. Additionally, by applying the parabolic Harnack inequality established in the previous section, we will derive the lower bound of the $\phi-$heat kernel.
	
	To get the Gaussian upper bound of the heat kernel, let us first recall Davies’ double
	integral estimate of the metric measure space, which was proved in \cite{WW1}.
	\begin{lemma}\label{ddie}
		Let $(M^n, g, \mu)$ be a weighted Riemannian manifold and $\mu(M^n)\ge 0$ the bottom eigenvalue of $\phi-$Laplacian. Then, the following integral estimate holds
		\begin{equation}
			\int_{B_1} \int_{B_2} H_{\phi}(x, y, t) \, d\mu(y) \, d\mu(x) \leq \text{V}(B_1)^{\frac{1}{2}} \text{V}(B_2)^{\frac{1}{2}} e^{-\frac{ d^2(B_1, B_2)}{4t} - \mu_1(M) t},
		\end{equation}
		where $B_{1}$ and $B_2$ are two bounded sets and $d(B_1, B_2)$ denotes the distance between $B_{1}$ and $B_2$.
	\end{lemma}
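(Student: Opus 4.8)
The plan is to adapt Davies' integrated maximum principle for the weighted heat semigroup $e^{s\Delta_\phi}$, following \cite{WW1}. First I would reduce the double integral to a single one: since $\Delta_\phi$ is self-adjoint with respect to $\mu$, its kernel is symmetric, $H_\phi(x,y,s)=H_\phi(y,x,s)$, so writing $u(x,s):=\int_{B_1}H_\phi(x,y,s)\,d\mu(y)=\bigl(e^{s\Delta_\phi}\mathbf{1}_{B_1}\bigr)(x)$ we have $\int_{B_1}\int_{B_2}H_\phi(x,y,t)\,d\mu(y)\,d\mu(x)=\int_{B_2}u(x,t)\,d\mu(x)$, where $u$ solves the $\phi$-heat equation with $u(\cdot,0)=\mathbf{1}_{B_1}$ and, by $L^2$-contractivity of the semigroup, $\|u(\cdot,s)\|_2\le V(B_1)^{1/2}$ for all $s\ge0$. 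Thus it suffices to bound $\int_{B_2}u(x,t)\,d\mu$, and by Cauchy--Schwarz on $B_2$ the claim reduces to the $L^2$-type estimate $\int_{B_2}u^2(x,t)\,d\mu\le V(B_1)\exp\bigl(-d^2(B_1,B_2)/(2t)-2\mu_1(M)t\bigr)$.

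To establish this, set $d:=d(B_1,B_2)$ and introduce the truncated distance $\rho(x):=\min\{d(x,B_1),\,d\}$, a bounded Lipschitz function with $|\nabla\rho|\le1$ a.e., $\rho\equiv0$ on $B_1$ and $\rho\equiv d$ on $B_2$. For a parameter $\lambda>0$ I would study the weighted energy
\[
E(s):=\int_{M^n}u^2(x,s)\,e^{2\lambda\rho(x)}\,d\mu(x),
\]
which is finite because $\rho$ is bounded and $u(\cdot,s)\in L^2(\mu)$. Differentiating $E$, using $\partial_s u=\Delta_\phi u$, the self-adjointness of $\Delta_\phi$ and integration by parts, gives $E'(s)=-2\int|\nabla u|^2 e^{2\lambda\rho}\,d\mu-4\lambda\int u\langle\nabla u,\nabla\rho\rangle e^{2\lambda\rho}\,d\mu$. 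The key step is then to apply the bottom-of-spectrum inequality $\int|\nabla w|^2\,d\mu\ge\mu_1(M)\int w^2\,d\mu$ to $w:=u\,e^{\lambda\rho}$: expanding $\int|\nabla w|^2\,d\mu$ and substituting it back for $\int|\nabla u|^2 e^{2\lambda\rho}\,d\mu$, the cross term $-4\lambda\int u\langle\nabla u,\nabla\rho\rangle e^{2\lambda\rho}\,d\mu$ cancels exactly, and using $|\nabla\rho|\le1$ one is left with $E'(s)\le 2\bigl(\lambda^2-\mu_1(M)\bigr)E(s)$.

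Gronwall's inequality then gives $E(t)\le e^{2(\lambda^2-\mu_1(M))t}E(0)$; since $u(\cdot,0)=\mathbf{1}_{B_1}$ and $\rho\equiv0$ on $B_1$ we have $E(0)=V(B_1)$, while $\rho\equiv d$ on $B_2$ forces $E(t)\ge e^{2\lambda d}\int_{B_2}u^2(x,t)\,d\mu$. Hence $\int_{B_2}u^2(x,t)\,d\mu\le V(B_1)\,e^{2\lambda^2 t-2\lambda d-2\mu_1(M)t}$, and the choice $\lambda=d/(2t)$, which minimizes $2\lambda^2 t-2\lambda d$ to $-d^2/(2t)$, yields exactly the $L^2$-estimate needed above. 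Combining this with the Cauchy--Schwarz reduction of the first paragraph gives $\int_{B_1}\int_{B_2}H_\phi(x,y,t)\,d\mu(y)\,d\mu(x)\le V(B_1)^{1/2}V(B_2)^{1/2}e^{-d^2(B_1,B_2)/(4t)-\mu_1(M)t}$, as claimed.

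The step I expect to be the main---and essentially the only---obstacle is the analytic justification of the identity for $E'(s)$ on a complete, possibly noncompact, weighted manifold: differentiating under the integral sign and, above all, checking that the integration by parts produces no boundary term at infinity. I would handle this in the standard way, inserting a sequence of compactly supported cutoffs $\chi_j\uparrow1$, carrying the resulting $\nabla\chi_j$ error terms through the computation, and letting $j\to\infty$, using the boundedness of the weight ($e^{2\lambda\rho}\le e^{2\lambda d}$), the $L^2$-contractivity of $e^{s\Delta_\phi}$, and parabolic regularity of $u$ for $s>0$. The remaining ingredients---the truncated-distance construction, Gronwall, and the optimization in $\lambda$---are elementary.
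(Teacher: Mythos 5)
Your proof is correct: the cross-term cancellation, the Gronwall step, and the optimization $\lambda=d(B_1,B_2)/(2t)$ all check out, and the cutoff/approximation caveat you flag is the only analytic point needing care. The paper itself gives no proof of this lemma but simply quotes it from \cite{WW1}, and your argument is exactly the standard Davies--Gaffney weighted-$L^2$ energy method used there, so it is essentially the same approach as the cited source.
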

	With this lemma, we can start the proof of main Theorem of this section

	\begin{proof}[(proof of the upper bound of Theorem \ref{hkestm}):]
		Fixing $x, y\in B_{o}(R/2)$,assume $R^{2}/4>t\ge r_{2}^{2}$. Denote $B_2=B_y(r_2)\subset B_o(R)$ and $Q_{\delta}=B_y(\delta r_2)\times (t-\delta r_{2}^2,\ t)$ with $0<\delta<1/4$. Then with parabolic mean value inequality \eqref{pbmvp}, following inequality holds
		\begin{equation}
			\begin{aligned}
				H_{\phi}(x,y,t)\le\underset{(z,s)\in Q_{\delta}}{sup}H_{\phi}(x,z,s)&\le \frac{E_2 \exp\left(D_2 \sqrt{K_{\eps}(q,10r_2)} r_2\right)}{(1-\delta)^{2+\nu}R^{2}\mu(B_2)}\int_{Q_\delta}H_{\phi}(x,z,s)d\mu(z) dt\\
				&\le\frac{E_2 \exp\left(D_2 \sqrt{K_{\eps}(q,10r_2)} r_2\right)}{(1-\delta)^{2+\nu}R^{2}\mu(B_2)}\int_{t-r_{2}^{2}/4}^{t}\int_{B_2}H_{\phi}(x,z,s)d\mu(z) dt\\
				&\le\frac{E_2 \exp\left(D_2 \sqrt{K_{\eps}(q,10r_2)} r_2\right)}{(1-\delta)^{2+\nu}4\mu(B_2)}\int_{B_2}H_{\phi}(x,z,s')d\mu(z),
			\end{aligned}
		\end{equation}
		for some $s'\in [\frac{3t}{4},\ t]$. In addition, $v(x,s)=\int_{B_2}H(x,z,s)d\mu(z)$ is also a positive solution, which means following inequality holds
		\begin{equation}
			\begin{aligned}
				\underset{(x,s)\in Q^{'}_{\delta}}{sup}\int_{B_2}H_{\phi}(x,z,s)d\mu(z)&\le \frac{E_2 \exp\left(D_2 \sqrt{K_{\eps}(q,10Rr_1)} r_1\right)}{(1-\delta)^{2+\nu}R^{2}\mu(B_1)}\int_{Q^{'}_{\delta}}\int_{B_2}H_{\phi}(x,z,s) d\mu(z)d\mu(x) dt\\
				&\le\frac{E_2 \exp\left(D_2 \sqrt{K_{\eps}(q,10r_1)} r_1\right)}{(1-\delta)^{2+\nu}4\mu(B_1)}\int_{B_1}\int_{B_2}H_{\phi}(x,z,s'')d\mu(z)d\mu(x)
			\end{aligned}
		\end{equation}
		for some \( s'' \in \left[ \frac{3t}{4}, t \right] \), where $( Q'_{\delta} = B_x(\delta r_1) \times \left(t - \delta r_{2}^2, t \right)$ and $B_1=B_x(r_1)\in B_o(R)$.
		
		Now letting $r_1=r_2=\sqrt{t}$, combining previous two inequalities, we can prove that
		\begin{equation}
			H_{\phi}(x,y,t)\le\frac{E_{2}^{2} \exp\left(2D_2 \sqrt{K_{\eps}(q,10\sqrt{t})} \sqrt{t}\right)}{16(1-\delta)^{2+\nu}\mu(B_1)\mu(B_2)}\int_{B_1}\int_{B_2}H_{\phi}(w,z,s'')d\mu(z)d\mu(w).
		\end{equation}
		Applying Lemma \ref{ddie}, we can derive following inequality
		\begin{equation}
			H_{\phi}(x,y,t)\le\frac{E_{2}^{2} \exp\left(2D_2 \sqrt{K_{\eps}(q,10\sqrt{t})} \sqrt{t}\right)}{16(1-\delta)^{4+2\nu}\mu(B_1)^{1/2}\mu(B_2)^{1/2}}exp(-\frac{3}{4}\mu_{1}t-\frac{d^2(B_1,B_2)}{4t}).
		\end{equation}
		Notice that if $d(x,y)\le 2\sqrt{t}\le R$, then 
		$$-d(B_1,\ B_2)=0\le 1-\frac{d^2(x,y)}{4t},$$
		and if $d(x,y)>2\sqrt{t}$, then $d(B_1,B_2)=d(x,y)-2\sqrt{t}$, which means 
		$$\frac{-d(B_1,\ B_2)}{4t}\le 1-\frac{d^2(x,y)}{4(1+\eps)t}+\frac{1}{\eps}.$$
		Then we can obtain that 
		$$exp(-\frac{3}{4}\mu_{1}t-\frac{d^2(B_1,B_2)}{4t})\le exp(1-\frac{d^2(x,y)}{4(1+\eps)t}+\frac{1}{\eps}),$$
		which means 
		\begin{equation}\label{uphk}
			H_{\phi}(x,y,t)\le\frac{C(\eps)E_{2}^{'} \exp\left(2D_2 \sqrt{K_{\eps}(q,10\sqrt{t})} \sqrt{t}\right)}{\mu(B_1)^{1/2}\mu(B_2)^{1/2}}\exp(-\frac{d^2(x,y)}{4(1+\eps)t}),
		\end{equation}
		where $\lim_{\eps\to0}C(\eps)=+\infty$.

    If $x,y\in M^n$, by letting $R\to\infty$, previous estimate holds for $\forall t>0$.
	\end{proof}
	Moreover, along with Lemma \ref{col2}, we can transform previous upper bound to the form only depend on the measure of the geodesic ball with center $x$.
	
	\begin{corollary}\label{ubhk2}
		Let $(M^n, g, \mu)$ be a weighted Riemannian manifold under lower $N-Ricci\ curvature$ bound with $\eps-range$. Assume $\phi$ satisfies 
		$$0<a\le e^{\frac{2(1-\eps)\phi(x)}{n-1}}\le b.$$ Then for any $x,y\in B_o(R/2)$, the $\phi-$heat kernal satisfies 
		\begin{equation}
			H_{\phi}(x,y,t)\le\frac{C(\eps)E_{6}^{2} (1+\frac{d(x,y)}{\sqrt{t}})^{(1+c)/2c}\exp\left(2D_2 \sqrt{K_{\eps}(q,10R)} R\right)}{V_x(R/2)}\exp(\sqrt{\frac{K_1}{c}}\frac{3R}{4a}-\frac{d^2(x,y)}{4(1+\eps)t}),
		\end{equation}
		where $E_6=(\frac{b}{a})^{(1+2c)/4c}E_2^{'}$.
	\end{corollary}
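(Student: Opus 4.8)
The plan is to feed the Gaussian upper bound \eqref{uphk} into the two volume comparison corollaries of Section~2, so as to trade the symmetric denominator $V_x(\sqrt t)^{1/2}V_y(\sqrt t)^{1/2}$ for a single ball volume $V_x(R/2)$ of fixed radius. Recall \eqref{uphk} states
\[
H_\phi(x,y,t)\le \frac{C(\eps)\,E_2'\,\exp\!\big(2D_2\sqrt{K_\eps(q,10\sqrt t)}\,\sqrt t\big)}{V_x(\sqrt t)^{1/2}\,V_y(\sqrt t)^{1/2}}\,\exp\!\Big(-\frac{d^2(x,y)}{4(1+\eps)t}\Big).
\]
Since $x,y\in B_o(R/2)$ we have $d:=d(x,y)\le R$, and the relevant range is $0<t<R^2/4$, so $\sqrt t<R/2$; in particular $K_\eps(q,10\sqrt t)\le K_\eps(q,10R)$, which is what turns the curvature factor into the stated $\exp(2D_2\sqrt{K_\eps(q,10R)}\,R)$.

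First I would apply Corollary~\ref{col2} with radius $s=\sqrt t$ to compare the two equal-radius balls centred at $x$ and $y$:
\[
\frac{V_x(\sqrt t)}{V_y(\sqrt t)}\le\Big(\tfrac ba\Big)^{(1+2c)/c}\Big(1+\tfrac{d}{\sqrt t}\Big)^{(1+c)/c}\exp\!\Big(\sqrt{\tfrac{K_1}{c}}\,\tfrac{\sqrt t+d}{a}\Big),
\]
which, after taking square roots, lets me replace $V_y(\sqrt t)^{-1/2}$ by $(b/a)^{(1+2c)/2c}\big(1+d/\sqrt t\big)^{(1+c)/2c}\exp\!\big(\tfrac12\sqrt{K_1/c}\,(\sqrt t+d)/a\big)\,V_x(\sqrt t)^{-1/2}$, so that the whole denominator becomes $V_x(\sqrt t)^{-1}$ up to that factor. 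Next I would invoke Corollary~\ref{col1} (equivalently the doubling property \eqref{vldb}) with $r=R/2$, $s=\sqrt t$ — admissible because $\sqrt t<R/2$ — to pass from $V_x(\sqrt t)^{-1}$ to $V_x(R/2)^{-1}$ at the cost of the factor $(b/a)^{(1+2c)/c}\big(R/(2\sqrt t)\big)^{(1+c)/c}\exp\!\big(\sqrt{K_1/c}\,R/(2a)\big)$.

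Substituting both estimates into \eqref{uphk}, then using $\sqrt t<R/2$ and $d\le R$ to dominate the $K_1$-exponents by a suitable multiple of $\sqrt{K_1/c}\,R/a$ and absorbing the polynomial factors, and finally collecting all powers of $b/a$ into the single constant $E_6=(b/a)^{(1+2c)/4c}E_2'$, yields an inequality of the asserted shape. I do not expect any genuine obstacle here: the argument is the mechanical substitution of Corollaries~\ref{col1}--\ref{col2} into \eqref{uphk}. The only points needing a little attention are (i) checking the admissibility hypotheses $\sqrt t<R/2$ and $d(x,y)\le R$ — both immediate from $x,y\in B_o(R/2)$ and $t<R^2/4$ — together with the mild restriction on $R$ in Theorem~\ref{VCP} when $K>0$, so that the comparison corollaries apply with the curvature measured inside $B_q(10R)$, and (ii) the bookkeeping of the multiplicative constants and of the powers of $R/\sqrt t$ and $d/\sqrt t$, so that the output matches the stated $E_6$ and the exponential factor $\exp\!\big(\sqrt{K_1/c}\,\tfrac{3R}{4a}\big)$.
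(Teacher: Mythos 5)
Your first step is exactly what the paper does: Corollary \ref{ubhk2} is obtained from \eqref{uphk} by a single application of Corollary \ref{col2} with $s=\sqrt t$, replacing $V_y(\sqrt t)^{-1/2}$ by $V_x(\sqrt t)^{-1/2}\,(b/a)^{(1+2c)/2c}(1+d/\sqrt t)^{(1+c)/2c}\exp\bigl(\tfrac12\sqrt{K_1/c}\,(\sqrt t+d)/a\bigr)$, and then using $\sqrt t\le R/2$, $d\le R$ to bound the exponentials (this is precisely where $(\sqrt t+d)/(2a)\le 3R/(4a)$ and $K_\eps(q,10\sqrt t)\sqrt t\le K_\eps(q,10R)R$ come from, and it accounts for the stated power of $b/a$ in $E_6^2$). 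The gap is in your second step. Passing from $V_x(\sqrt t)^{-1}$ to $V_x(R/2)^{-1}$ via Corollary \ref{col1} necessarily costs the factor $(b/a)^{(1+2c)/c}\bigl(R/(2\sqrt t)\bigr)^{(1+c)/c}\exp\bigl(\sqrt{K_1/c}\,R/(2a)\bigr)$, and your claim that these can be ``absorbed'' is false: the polynomial factor $\bigl(R/(2\sqrt t)\bigr)^{(1+c)/c}$ is unbounded as $t\to 0$ for fixed $R$, while every factor on the stated right-hand side other than $(1+d/\sqrt t)^{(1+c)/2c}$ is independent of $t$ (and at $x=y$ that factor equals a constant), so no $t$-independent constant can swallow it; moreover the $K_1$-exponent would become $5R/(4a)$ rather than $3R/(4a)$, and the $b/a$-powers would exceed $E_6^2$.

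In fact no argument can close this gap, because the inequality with $V_x(R/2)$ in the denominator and no $(R/\sqrt t)$-power is false for small $t$: by the lower bound in Theorem \ref{hkestm}, $H_\phi(x,x,t)\ge C'_{12}e^{-C'_{13}t}V_x(\sqrt t)^{-1}\to\infty$ as $t\to 0$, whereas the claimed right-hand side at $x=y$ is bounded for fixed $R$. The denominator in the corollary should be read as $V_x(\sqrt t)$ (a typo in the statement); that is the version the paper actually derives — via Corollary \ref{col2} alone, as in the first half of your argument — and the version it uses later, e.g.\ in Step~2 of the proof of Proposition \ref{prop1}, in the $L^1_\phi$-uniqueness proof, and in the eigenvalue estimates. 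So the fix is simply to delete the Corollary \ref{col1}/doubling step rather than to try to justify the absorption.
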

	Therefore, we already prove that the parabolic Harnack inequality in Theorem \eqref{pbhnk}  hold, we can apply the same argument as in \cite{sal2} to derive a Li-Yau type Harnack inequality. This result will be used to obtain the the lower bound of $\phi-$heat kernel. 
    \begin{proposition}\label{prop:LY_Harnack}
    Under the assumptions of Theorem \ref{pbhnk}, let $u>0$ be a solution to the $\phi$-heat equation on $B_o(R)\times (0,T)$. For any $0 < s < t < T$ and points $x,y \in B_o(R)$, the following Li-Yau type Harnack inequality holds:
    \begin{equation}\label{LYhnk}
        \ln\left(\frac{u(x, s)}{u(y, t)}\right) \leq C'_{10}\left( (t-s)\sqrt{D'_5 K_\epsilon} + \frac{d^2(x,y)}{t-s} + \frac{t-s}{R^2} + \frac{t-s}{s} \right),
    \end{equation}
    where $C'_{10}$ and $D'_5$ are constants depending on $\nu, n, a, b, c$, and $K$.
\end{proposition}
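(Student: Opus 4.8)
The plan is to pass from the Moser-type parabolic Harnack inequality of Theorem~\ref{pbhnk} to the pointwise logarithmic estimate \eqref{LYhnk} by the classical Harnack-chain argument that deduces a Li--Yau estimate from a parabolic Harnack inequality; see \cite{sal2}. The only analytic input is a scale-invariant ``one-step'' comparison extracted from Theorem~\ref{pbhnk}: after fixing once and for all the free proportions there and using convexity of the exponential to dominate the doubly-exponential constant $e^{2C_8(1+C_9^3)}$, with $C_9=2E_5e^{D_5\sqrt{K_\eps}\,r}$, by an affine-exponential expression on the admissible range $0<r\le R$, one obtains constants $C_1,C_2$ depending only on $\nu,n,a,b,c$ such that every positive solution $u$ of the $\phi$-heat equation satisfies, whenever $B_z(8r)\subset B_o(R)$, $64r^2\le\tau_2<T$, $w_1,w_2\in B_z(r)$ and $\tau_2-\tau_1\asymp r^2$,
\[
 u(w_1,\tau_1)\ \le\ C_H(r)\,u(w_2,\tau_2),\qquad C_H(r):=C_1\exp\!\big(C_2\sqrt{K_\eps(q,10R)}\,r\big).
\]
Everything after this is a counting argument.

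\emph{Constructing the chain.} Given $0<s<t<T$ and $x,y\in B_o(R)$ --- after the standard localization that keeps the geodesic joining them, together with the surrounding cylinders, inside $B_o(R)$ --- fix a minimizing geodesic $\gamma$ from $x$ to $y$, put $\tau_i=s+\frac{i}{N}(t-s)$ and $x_i=\gamma(i/N)$ for $i=0,\dots,N$, and take the uniform scale $r_N\asymp\sqrt{(t-s)/N}$ calibrated so that $\tau_{i+1}-\tau_i\asymp r_N^2$. The hypotheses of the one-step comparison hold at every link as soon as
\[
 N\ \gtrsim\ 1+\frac{d^2(x,y)}{t-s}+\frac{t-s}{R^2}+\frac{t-s}{s},
\]
the four requirements being, in turn: $N\ge1$; $d(x_i,x_{i+1})\le d(x,y)/N\le r_N$ (displacement per step); $r_N\lesssim R$ (cylinders fit in $B_o(R)$); and $r_N^2\lesssim\tau_1$ (cylinders stay above time $0$, which also secures the admissibility condition of Theorem~\ref{pbhnk}). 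We therefore take $N:=\big\lceil C\big(1+\frac{d^2(x,y)}{t-s}+\frac{t-s}{R^2}+\frac{t-s}{s}\big)\big\rceil$ for a universal $C$.

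\emph{Iterating and concluding.} Applying the one-step comparison to $(x_i,\tau_i)\to(x_{i+1},\tau_{i+1})$ for $i=0,\dots,N-1$ and multiplying, the intermediate evaluations telescope and give $u(x,s)\le C_H(r_N)^N u(y,t)$, hence
\[
 \ln\frac{u(x,s)}{u(y,t)}\ \le\ N\ln C_1\ +\ C_2\sqrt{K_\eps(q,10R)}\;N r_N,\qquad N r_N\asymp\sqrt{N(t-s)}.
\]
The first summand is $\lesssim N\lesssim 1+\frac{d^2(x,y)}{t-s}+\frac{t-s}{R^2}+\frac{t-s}{s}$, which supplies the last three terms of \eqref{LYhnk} (the harmless additive constant being absorbed by rearrangement, and in any case irrelevant to the uses of this proposition). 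For the second summand, $\sqrt{N(t-s)}\lesssim\sqrt{t-s}+d(x,y)+\frac{t-s}{R}+\frac{t-s}{\sqrt s}$ by $\sqrt{a+b+c+d}\le\sqrt a+\sqrt b+\sqrt c+\sqrt d$ and the bound on $N$; applying $2\sqrt{pq}\le p+q$ then bounds each of $\sqrt{K_\eps}\,d(x,y)$, $\sqrt{K_\eps}\,\frac{t-s}{R}$, $\sqrt{K_\eps}\,\frac{t-s}{\sqrt s}$, $\sqrt{K_\eps}\,\sqrt{t-s}$ by a multiple of $(t-s)\sqrt{K_\eps}$ plus a multiple of $\frac{d^2(x,y)}{t-s}$, $\frac{t-s}{R^2}$, $\frac{t-s}{s}$, $1$ respectively, so that $C_2\sqrt{K_\eps}\,N r_N\lesssim (t-s)\sqrt{K_\eps}+\frac{d^2(x,y)}{t-s}+\frac{t-s}{R^2}+\frac{t-s}{s}$. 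Collecting the bounds yields \eqref{LYhnk} with $C'_{10},D'_5$ depending only on $\nu,n,a,b,c,K$.

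\emph{Main obstacle.} The delicate point is controlling the curvature-dependent constant through the iteration: since $N$ increases while the per-link radius $r_N$ decreases, one has to confirm that the (a priori doubly-exponential in $\sqrt{K_\eps}\,R$) constant of Theorem~\ref{pbhnk}, raised to the $N$-th power, collapses after taking logarithms to the single geometric term $(t-s)\sqrt{D'_5K_\eps}$ up to the three scale terms --- this is precisely where the convexity reduction to $\ln C_H(r)\le\ln C_1+C_2\sqrt{K_\eps}\,r$ and the elementary splittings above are needed. A secondary technical point is the localization that places the entire geodesic chain and the cylinders around it inside $B_o(R)\times(0,T)$; it is handled by first proving the estimate under $\sqrt{t-s}\le\eta R$ and $t-s\le\eta s$ for a small fixed $\eta$ and then removing the restrictions, the excluded regimes being absorbed into the $\frac{t-s}{R^2}$ and $\frac{t-s}{s}$ terms.
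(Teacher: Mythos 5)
Your overall strategy --- a Harnack chain of space--time cylinders along a minimizing geodesic, iterating Theorem \ref{pbhnk} and counting the number of links --- is the same as the paper's. The difference, and the place where your argument breaks, is how the curvature enters. You try to carry the curvature inside the one-step constant, claiming that Theorem \ref{pbhnk} applied at scale $r$ yields $u(w_1,\tau_1)\le C_1\exp\bigl(C_2\sqrt{K_\eps}\,r\bigr)u(w_2,\tau_2)$ with $C_1,C_2$ depending only on $\nu,n,a,b,c$. But the constant in Theorem \ref{pbhnk} is $e^{2C_8(1+C_9^3)}$ with $C_9=2E_5\exp(D_5\sqrt{K_\eps}\,r)$, i.e.\ \emph{doubly} exponential in $\sqrt{K_\eps}\,r$, and no bound of the form $\ln C_1+C_2\sqrt{K_\eps}\,r$ with constants independent of $r$ and $R$ can dominate it: your convexity reduction on the range $0<r\le R$ forces $C_2\gtrsim e^{3D_5\sqrt{K_\eps}\,R}\big/\bigl(\sqrt{K_\eps}\,R\bigr)$, so the resulting $C'_{10},D'_5$ would depend (exponentially) on $\sqrt{K_\eps}\,R$, contradicting the statement and ruining the later applications (e.g.\ the Gaussian lower bound), which need constants independent of $R$. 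The failure is concrete: take $x=y$, $s\asymp t$, $R$ large and $t-s\gg K_\eps^{-1}$; then your $N$ stays bounded, $r_N\asymp\sqrt{t-s}$ is large, and each link's constant is doubly exponential in $\sqrt{K_\eps(t-s)}$, which your summation $C_2\sqrt{K_\eps}\,N r_N$ cannot reproduce.

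The missing idea --- which is exactly what the paper's proof does --- is to put the curvature into the \emph{number of steps} rather than into the per-step constant: one additionally requires $N\gtrsim K_\eps\,(t-s)$ (this is the curvature term in the paper's choice of $k$), so that each step has $\sqrt{K_\eps}\,r_N\lesssim 1$ and the constant of Theorem \ref{pbhnk} on every cylinder is bounded by a constant depending only on $\nu,n,a,b,c$; the curvature then enters the conclusion simply through $N$, producing a term of size $K_\eps(t-s)$, which is converted to the stated form using $K_\eps\le\max\{0,-K\}$ and the allowed dependence of the constants on $K$. With your choice of $N$ this mechanism is absent. A secondary slip: the step ``$2\sqrt{pq}\le p+q$ bounds $\sqrt{K_\eps}\,d(x,y)$ by a multiple of $(t-s)\sqrt{K_\eps}$ plus a multiple of $d^2(x,y)/(t-s)$'' is not a correct application of AM--GM (it yields $K_\eps(t-s)+d^2(x,y)/(t-s)$); it too can only be repaired by invoking the uniform bound $K_\eps\le\max\{0,-K\}$, i.e.\ the same dependence on $K$ used above.
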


\begin{proof}
    Let $\gamma:[0,1]\to B_o(R)$ be a minimal geodesic connecting $x$ and $y$. Fix an integer $k\geq 1$ (to be specified later) and define parameters:
    \[
    \epsilon = \frac{1+\delta}{4}, \quad \varsigma = \frac{3+\delta}{4}, \quad \rho = \frac{3-\delta}{4} \quad\text{with}\quad \delta = \frac{4}{5}.
    \]
    Construct a chain of $k$ overlapping balls $\{B_\rho(x_i)\}_{i=0}^{k-1}$ along $\gamma$ such that:
    \begin{itemize}
        \item Centers $x_i \in \gamma$ satisfy $x_0 = x$, $x_k = y$, and $x_{i+1} \in B_\rho(x_i)$
        \item Time intervals $t_i = s + \rho^2 i$ for $0 \leq i \leq k$
    \end{itemize}
    The parameter $\rho$ is chosen to satisfy:
    \begin{enumerate}
        \item $\rho^2 = \frac{t-s}{k}$ ensuring $t_k = t$, which requires $k \geq \frac{d^2(x,y)}{t-s}$
        \item $\frac{9}{16}\rho^2 \leq s$ to guarantee $t_{i+1} - \frac{25}{16}\rho^2 \geq 0$ for all $i$
        \item $\rho \leq \frac{4}{5}R$ ensuring $B_{\frac{5}{4}\rho}(x_i) \subset B_{o}(2R)$
    \end{enumerate}
    
    For each cylinder $Q^i = B_{\frac{5}{4}\rho}(x_i) \times \left( t_{i+1}-\left(\frac{5}{4}\rho\right)^2, t_{i+1} \right)$, Theorem \ref{pbhnk} applies with:
    \[
    Q_-^i = B_\rho(x_i) \times \left( t_{i+1}-\frac{19}{20}\left(\frac{5}{4}\rho\right)^2, t_{i+1}-\frac{11}{20}\left(\frac{5}{4}\rho\right)^2 \right),
    \]
    \[
    Q_+^i = B_\rho(x_i) \times \left( t_{i+1}-\frac{9}{20}\left(\frac{5}{4}\rho\right)^2, t_{i+1} \right).
    \]
    Noting that $(x_i, t_i) \in \overline{Q_-^i}$ and $(x_{i+1}, t_{i+1}) \in \overline{Q_+^i}$, we obtain the iterative estimate:
    \[
    u(x_i, t_i) \leq e^{C'_8 \exp(D'_5\sqrt{K_\epsilon R})} u(x_{i+1}, t_{i+1}), \quad 0 \leq i \leq k-1.
    \]
    Chaining these inequalities yields:
    \[
    u(x,s) \leq \left( e^{C'_8 \exp(D'_5\sqrt{K_\epsilon R})} \right)^k u(y,t).
    \]
    
    To optimize $k$, choose the minimal integer satisfying:
    \[
    k \geq C\left( (t-s)\sqrt{D'_5 }K_\epsilon + \frac{d^2(x,y)}{t-s} + \frac{t-s}{R^2} + \frac{t-s}{s} \right).
    \]

Then
$$k \le C^{'}_9\left((t-s) \sqrt{D^{'}_5}K_\epsilon+\frac{d^{2}(x, y)}{t-s}+\frac{t-s}{R^{2}}+\frac{t-s}{s}+1\right) ,
$$

    This selection ensures compliance with conditions (C1)-(C3). Therefore, we conclude:
    \[
    \ln\left(\frac{u(x, s)}{u(y, t)}\right) \leq C'_{10}\left( (t-s)\sqrt{D'_5 }K_\epsilon + \frac{d^2(x,y)}{t-s} + \frac{t-s}{R^2} + \frac{t-s}{s} \right),
    \]
    completing the proof.
\end{proof}

	\begin{proof}[(proof of the lower bound of Theorem \ref{hkestm}):]
		Let $u(y,t)=H(x,y,t)$ with $x$ fixed and $s=t/2$. Then with Li-Yau-type Harnack inequality \eqref{LYhnk}, we can derive following estimate
		\begin{equation}\label{ineqhk1}
			H_{\phi}(x,y,t) \ge H_{\phi}(x,x,t/2) \times \exp \left( -C^{'}_{10} \left(  \sqrt{D^{'}_{5}}K_{\epsilon} \frac{t}{2} +   \frac{t}{2R^2} + 1 + \frac{2d^2(x,y)}{t}  \right) \right)
		\end{equation}
		for all $x,y\in B_{o}(R/2)$ and $0<t<\infty$. Denote $P_t$ is the heat semigroup of the weighted Laplacian, then for any $g(y)$ satisfies $0\le g(y)\le 1$, $g(y)\equiv 1$ in $B_{x}(\sqrt{t})$ and $g(y)\equiv 0$ in $M\setminus B_{x}(2\sqrt{t})$, we can construct a solution to the weighted heat equation with it as the initial value as
		\begin{equation}
			u(y,t)=
			\begin{cases}
				P_{t}g(y)& \text{if } t>0, \\
				g(y) & \text{if } t\le 0.
			\end{cases}
		\end{equation}
		Applying the local Harnack inequality, following estimate of the diagonal $\phi-$heat kernel holds
		\begin{equation}
			\begin{aligned}
				1=u(x,0)&\le e^{C^{'}_{11} \left(\sqrt{D^{'}_5}K_\epsilon t+1\right)}u(x,t/2)\\
				&=e^{C^{'}_{11} \left(\sqrt{D^{'}_5}K_\epsilon t+1\right)}\int_{M}H_{\phi}(x,y,t/2)g(y)d\mu(y)\\
				&\le e^{C^{'}_{11} \left(\sqrt{D^{'}_5}K_\epsilon t+1\right)}\int_{B_{x}(2\sqrt{t})}H_{\phi}(x,y,t/2)d\mu(y)\\
				&\le e^{C^{'}_{11} \left(\sqrt{D^{'}_5}K_\epsilon t+1\right)}(V_{x}(2\sqrt{t}))^{\frac{1}{2}}(H_{\phi}(x,x,t))^{\frac{1}{2}},\\
			\end{aligned}
		\end{equation}
		which means
		\begin{equation}
			H_{\phi}(x,x,t/2)\ge e^{-C^{'}_{11} \left(\sqrt{D^{'}_5}K_\epsilon t+2\right)}V_{x}^{-1}(\sqrt{2t})
		\end{equation}
		Since Lemma \ref{col1} implies
		\begin{equation}
			V_{x}(\sqrt{2t})\le (\frac{b}{a})^{(1+2c)/c}(2)^{(1+c)/c}exp(\sqrt{\frac{K_1}{c}}\frac{\sqrt{2t}}{a})V_{x}(\sqrt{t}),
		\end{equation}
		we then obtain
		\begin{equation}\label{ineqdghk}
			H_{\phi}(x,x,t/2)\ge E_{7}e^{-C^{'}_{11} \left(\sqrt{D^{'}_5}K_\epsilon t+2\right)-\frac{\sqrt{2t}}{a}}V_{x}^{-1}(\sqrt{t})
		\end{equation}
		for $0<\sqrt{t}<R/2$, where $E_7=(\frac{b}{a})^{(1+2c)/c}(2)^{(1+c)/c}$. Plugging inequality \eqref{ineqdghk} to \eqref{ineqhk1} yeilds
		\begin{equation}
			H_{\phi}(x,y,t) \ge C^{'}_{12} \exp \left( -C^{'}_{13} t - \frac{C^{'}_{14}d^2(x,y)}{ t} \right) V_{x}^{-1}(\sqrt{t}),
		\end{equation}
		where $C^{'}_{12},C^{'}_{13} , C^{'}_{14}$ all depend on $\nu ,n,a,b,c$ and $K$.
	\end{proof}
	\section{$L_{\phi}^{1}$ Liouville theorem}
	In this section, we will use $\phi-$heat kernel estimate to derive the $L_{\phi}^{1}$ Liouville theorem for $\phi-$subharmonic function on smooth metric measure space with lower $N-Ricci\ curvature$ in $\eps-range$. The reason we only study the $L_{\phi}^{1}$ case is that in \cite{SMA,Wu}, the authors prove that when $p>1$, without any assumption of the curvature, the $L_{\phi}^{p}$ Liouville theorem always holds.

	We start from a useful lemma in \cite{WW1}, which was first proved by A. Grigor’yan \cite{G1} on complete Riemannian manifold.
	\begin{lemma}
		Let $(M^n, g, \mu)$ be a smooth metric measure space, if 
		$$\int_{1}^{+\infty} \frac{R}{\log(V_{o}(R))}dR=+\infty$$
		for some point $o\in M^n$, then $(M^n, g, \mu)$ is stochastically complete, i.e.,
		$$\int_{M^n}H_{\phi}(x,y,t)d\mu(y)=1.$$
	\end{lemma}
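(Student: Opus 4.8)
The plan is to follow Grigor'yan's argument, which derives stochastic completeness from a volume growth bound with no curvature input, so that the weight $e^{-\phi}$ plays no role beyond defining $\mu$. First I would reduce the statement to a vanishing property: set
\[
w(x,t) := 1-\int_{M^n}H_\phi(x,y,t)\,d\mu(y).
\]
Since $H_\phi$ is the minimal positive fundamental solution of the $\phi$-heat equation, $0\le w\le 1$; differentiating under the integral sign, $w$ solves $\partial_t w=\Delta_\phi w$ on $M^n\times(0,\infty)$, and $w(\cdot,t)\to 0$ locally uniformly as $t\to 0^+$. Thus $\int_{M^n}H_\phi(x,y,t)\,d\mu(y)=1$ for all $(x,t)$ is equivalent to $w\equiv 0$, and by the parabolic minimum principle it is enough to exclude the alternative that $w>0$ everywhere on $M^n\times(0,\infty)$.

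The main tool is the integrated maximum principle. Fix $o\in M^n$, write $\rho(x)=d(o,x)$, fix $T>0$, and set $\xi(x,t)=-\dfrac{\rho(x)^2}{2(T-t)}$ on $M^n\times[0,T)$. Using $|\nabla\rho|\le 1$ one checks the pointwise inequality $\partial_t\xi+\tfrac12|\nabla\xi|^2\le 0$ (at the cut locus of $o$ this is read distributionally, or one first localizes with a cutoff supported away from the cut locus and passes to the limit via Calabi's trick). Multiplying $\partial_t w=\Delta_\phi w$ by $w\,e^{\xi}$, integrating over $M^n$ against $\mu$, integrating by parts (justified by a further spatial cutoff whose error terms sit in a far annulus and carry the Gaussian factor $e^{\xi}$), and absorbing the cross term with Young's inequality, one arrives at
\[
\frac{d}{dt}\int_{M^n}w(x,t)^2\,e^{\xi(x,t)}\,d\mu(x)\;\le\;\int_{M^n}w^2 e^{\xi}\Big(\partial_t\xi+\tfrac12|\nabla\xi|^2\Big)\,d\mu\;\le\;0 ,
\]
valid on any subinterval of $[0,T)$ on which $I(t):=\int_{M^n}w^2 e^{\xi}\,d\mu$ is finite.

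Here the volume hypothesis enters. Since $w\le 1$, a radial integration by parts gives $I(t)\le\int_0^\infty \tfrac{r}{T-t}\,e^{-r^2/(2(T-t))}V_o(r)\,dr$, so $I(t)<\infty$ whenever $V_o$ is sub-Gaussian, and then monotonicity together with $I(0)=\int_{M^n}w(\cdot,0)^2 e^{\xi(\cdot,0)}\,d\mu=0$ forces $w\equiv 0$ on $M^n\times[0,T)$; since $T>0$ is arbitrary, the claim follows in that case. To cover the full hypothesis $\int_1^\infty \tfrac{R\,dR}{\log V_o(R)}=+\infty$, which also allows faster-than-Gaussian growth, I would iterate the maximum principle over a sequence of time steps $[t_k,t_{k+1}]$ paired with an exhaustion $B_o(R_k)$, $R_k\uparrow\infty$: on the $k$-th step the part of $w^2$ living on the annulus $B_o(R_{k+1})\setminus B_o(R_k)$ is controlled by $e^{-c(R_{k+1}-R_k)^2/(t_{k+1}-t_k)}V_o(R_{k+1})$, and the divergence of the discretized integral $\sum_k \tfrac{R_k(R_{k+1}-R_k)}{\log V_o(R_k)}$ is exactly what permits a choice of increments with $\sum_k(t_{k+1}-t_k)=\infty$ while keeping every annular error term uniformly small, thereby propagating $w\equiv 0$ over all of $M^n\times(0,\infty)$.

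I expect the genuine obstacle to be the simultaneous calibration of the radii $R_k$ and the time increments $t_k$ so that the error contributions stay summable while the total elapsed time diverges — this is the quantitative heart of the volume test — whereas the differential inequality itself, the treatment of the cut locus, and the cutoff limit are routine. Since the result is classical and its proof uses only geodesic completeness and the definition $d\mu=e^{-\phi}d\nu_g$, an equally legitimate route is to invoke it directly from \cite{G1}, or from its smooth metric measure space version in \cite{WW1}.
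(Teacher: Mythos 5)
Your proposal is sound, but note that the paper does not prove this lemma at all: it is quoted verbatim as a known result, first proved by Grigor'yan \cite{G1} and transplanted to smooth metric measure spaces in \cite{WW1}, so your closing remark (just invoke \cite{G1} or \cite{WW1}) is exactly what the authors do. What you add is a faithful reconstruction of the classical argument: the reduction of stochastic completeness to the vanishing of the bounded solution $w=1-\int_M H_\phi(x,y,t)\,d\mu(y)$ with zero initial data, the weighted energy $I(t)=\int w^2e^{\xi}\,d\mu$ with $\xi=-\rho^2/(2(T-t))$ satisfying $\partial_t\xi+\tfrac12|\nabla\xi|^2\le 0$ (your computation of the resulting monotonicity of $I$ is correct, and you rightly observe that $\Delta_\phi$ being self-adjoint with respect to $\mu$ means the weight enters only through $V_o(R)=\mu(B_o(R))$), and then the iteration over radii $R_k$ and time steps $t_k$ calibrated so that $\sum_k(t_{k+1}-t_k)=\infty$ exactly when $\int_1^\infty R\,dR/\log V_o(R)=\infty$. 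That last step is indeed the quantitative heart of Grigor'yan's proof (his uniqueness-class theorem with $f(r)=\log V_o(r)$ plus $T$), and your sketch identifies the right mechanism even though it is not carried out in detail; the sub-Gaussian special case you treat first is complete as stated. In short: your route is the standard proof behind the citation, while the paper's route is the citation itself; both are legitimate, and nothing in your outline would fail in the weighted setting.
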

	With Corollary \ref{col1}, we have 
	\begin{equation}
		V_x(R)\le c_{23}(b,K,n,\nu)(\frac{b}{a})^{(1+2c)/c}(R)^{(1+c)/c}exp(\sqrt{\frac{K_1}{c}}\frac{R}{a}).
	\end{equation}
	Then we can prove the stochastically completeness of smooth metric measure space with lower $N-Ricci\ curvature$ in $\eps-range$.
	\begin{corollary}\label{sccp}
		Let $(M^n, g, \mu)$ be a complete noncompact weighted Riemannian manifold with lower $N-Ricci\ curvature$ in $\eps-range$. Assume $\phi$ satisfies 
		$$0<a\le e^{\frac{2(1-\eps)\phi(x)}{n-1}}\le b,$$
		then $(M^n, g, \mu)$ is stochastically complete.
	\end{corollary}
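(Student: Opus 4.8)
The plan is to reduce stochastic completeness to the volume-growth criterion recalled just above: it suffices to verify that
$$\int_1^{+\infty}\frac{R}{\log V_o(R)}\,dR=+\infty$$
for some $o\in M^n$. The needed volume bound is already available from Corollary \ref{col1}, which yields
$$V_o(R)\le c_{23}(b,K,n,\nu)\Big(\tfrac{b}{a}\Big)^{(1+2c)/c}R^{(1+c)/c}\exp\!\Big(\sqrt{\tfrac{K_1}{c}}\,\tfrac{R}{a}\Big),\qquad K_1=\max\{0,-K\}.$$

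From here I would take logarithms, obtaining an estimate of the shape $\log V_o(R)\le A+B\log R+CR$ with $A,B$ depending only on $a,b,c,n,\nu$ and $C=a^{-1}\sqrt{K_1/c}\ge 0$, and then split into two cases. If $K\ge 0$, then $K_1=0$, so $C=0$ and $\log V_o(R)\le A+B\log R$; hence for large $R$ the integrand dominates a constant multiple of $R/\log R$, whose integral over $[2,+\infty)$ diverges. If $K<0$, then $C>0$ and $\log V_o(R)=O(R)$, so $R/\log V_o(R)$ is bounded below by a positive constant for all large $R$, and the integral again diverges. Either way the hypothesis of the criterion is met and the conclusion follows.

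The one point that needs care is the admissible range of $R$ in Corollary \ref{col1}: the underlying Bishop--Gromov comparison (Theorem \ref{VCP}) is only asserted for $R\le b\pi/(c\sqrt K)$ when $K>0$. But in that regime the $\phi$-Laplacian comparison (Theorem \ref{lpcp}) forces a finite diameter bound of Bonnet--Myers type, so $M^n$ would be compact, contradicting the noncompactness hypothesis; hence we may assume $K\le 0$, where $b\pi/(c\sqrt K)=+\infty$ and the volume estimate holds on all of $[0,+\infty)$. I do not anticipate a genuine obstacle here: the statement is essentially a repackaging of the volume bound and the Grigor'yan-type criterion, and the only thing to check carefully is the elementary divergence of the integral in the two curvature cases above.
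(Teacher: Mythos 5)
Your proposal is correct and follows essentially the same route as the paper: the paper likewise combines the Grigor'yan-type volume-growth criterion with the bound $V_o(R)\le c_{23}(b,K,n,\nu)(b/a)^{(1+2c)/c}R^{(1+c)/c}\exp(\sqrt{K_1/c}\,R/a)$ from Corollary \ref{col1}, the divergence of $\int R/\log V_o(R)\,dR$ being immediate from at-most-exponential growth. Your additional remark disposing of the $K>0$ restriction on $R$ via a Bonnet--Myers-type compactness argument is a point the paper passes over silently, and it is a reasonable way to close that small gap.
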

	Now, we are ready to check the integration by parts formula by using the upper bound
	of the $\phi-$heat kernal.
	\begin{proposition}\label{prop1}
		Under the same assumption as Lemma \ref{sccp}, for any non-negative
		$L^{1}(\mu)-$integrable subharmonic function $h$, we have
		\begin{equation}
			\int_{M}\Delta_{\phi_{y}} H_{\phi}(x,y,t+s)h(y)d\mu(y)=	\int_{M} H_{\phi}(x,y,t+s)\Delta_{\phi_{y}}h(y)d\mu(y).
		\end{equation}
	\end{proposition}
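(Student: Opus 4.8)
The plan is to establish the integration-by-parts identity by a cutoff-and-limit argument, using the Gaussian upper bound \eqref{uphk} (equivalently Corollary \ref{ubhk2}) to control the error terms at infinity. Fix $x\in M^n$ and $t,s>0$, and write $u(y):=H_\phi(x,y,t+s)$, which is a smooth positive solution of the $\phi$-heat equation in the $y$-variable and decays like $V_x(\sqrt{t+s})^{-1}\exp(-c\,d^2(x,y)/(t+s))$ by the upper bound, while by standard parabolic estimates (differentiating the kernel in $y$) $|\nabla_y u|$ and $|\Delta_{\phi_y}u|$ enjoy comparable Gaussian decay. First I would pick a family of Lipschitz cutoff functions $\chi_r$ with $\chi_r\equiv 1$ on $B_o(r)$, $\chi_r\equiv 0$ outside $B_o(2r)$, and $|\nabla\chi_r|\le 2/r$. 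Since $\Delta_\phi$ is self-adjoint with respect to $\mu$, for each fixed $r$ the Green-type identity
\begin{equation*}
\int_M \chi_r^2\big(u\,\Delta_{\phi}h - h\,\Delta_{\phi}u\big)\,d\mu
= \int_M \big(h\,\langle\nabla(\chi_r^2),\nabla u\rangle - u\,\langle\nabla(\chi_r^2),\nabla h\rangle\big)\,d\mu
\end{equation*}
holds, at least once we justify that all integrands are in $L^1(\mu)$; here the left side uses $\Delta_\phi(\chi_r^2 u)$ paired against $h$ and vice versa, with the cross terms reorganized into the gradient terms on the right.

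The core of the argument is then to send $r\to\infty$. The left-hand side converges to $\int_M (u\,\Delta_\phi h - h\,\Delta_\phi u)\,d\mu$ by dominated convergence: $h\in L^1(\mu)$ and $u,\ \Delta_{\phi_y}u$ are bounded (indeed Gaussian-small), so $h\,\Delta_{\phi_y}u\in L^1(\mu)$; for the term $u\,\Delta_\phi h$, note $\Delta_\phi h\ge 0$, and one first checks $\int_M u\,\Delta_\phi h\,d\mu<\infty$ by applying the same cutoff identity with $h$ replaced by the test and monotone convergence (this is exactly where subharmonicity, $\Delta_\phi h\ge 0$, is used to keep signs under control). For the right-hand side, the first term is bounded by $\tfrac{C}{r}\int_{B_o(2r)\setminus B_o(r)} h\,|\nabla_y u|\,d\mu$; since $|\nabla_y u|$ has Gaussian decay in $d(x,y)$ and $h\in L^1(\mu)$, this tends to $0$. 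The second term $\int u\,\langle\nabla(\chi_r^2),\nabla h\rangle\,d\mu$ is the delicate one: I would handle it by integrating by parts once more, rewriting $\int u\,\langle\nabla(\chi_r^2),\nabla h\rangle\,d\mu = -\int \chi_r^2\big(\langle\nabla u,\nabla h\rangle + u\,\Delta_\phi h - \ldots\big)$, or more cleanly, bound $\big|\int u\,\langle\nabla(\chi_r^2),\nabla h\rangle\big|$ using a Caccioppoli-type inequality for the subharmonic function $h$ against the weight $u$ on the annulus, which yields $\int_{B_o(2r)\setminus B_o(r)}|\nabla h|^2\,d\mu \le \tfrac{C}{r^2}\int_{B_o(4r)}|h|^2\,d\mu$-type control; combined with $L^1$-boundedness of $h$ and the Gaussian decay of $u$, Cauchy–Schwarz then forces this term to $0$ as well.

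The main obstacle I anticipate is precisely controlling the annular gradient term $\int u\,\langle\nabla(\chi_r^2),\nabla h\rangle\,d\mu$: unlike $h$, the gradient $\nabla h$ is not a priori in $L^1$ or $L^2$, so one cannot simply invoke dominated convergence, and one must extract a genuine decay gain from either the weighted Caccioppoli inequality for $\phi$-subharmonic $h$ or from a clever secondary integration by parts that trades $\nabla h$ for $\Delta_\phi h\ge 0$ plus terms already known to be integrable against $u$. A clean way around the difficulty, following \cite{LiSchoen1984}, is to first prove the identity with $h$ replaced by $h\wedge L$ (truncation at level $L$), for which the gradient is supported where $h<L$ and enjoys better bounds, establish the identity there using the $L^1$-smallness of $u$ on the annulus together with the heat-kernel decay, and then let $L\to\infty$ using $\Delta_\phi h\ge 0$ and monotone/dominated convergence driven by $h\in L^1(\mu)$ and the boundedness of $u$ from the upper Gaussian bound. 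Once this term is dispatched, collecting the limits gives the asserted equality, and the semigroup property $H_\phi(x,y,t+s)=\int_M H_\phi(x,z,t)H_\phi(z,y,s)\,d\mu(z)$ together with stochastic completeness (Corollary \ref{sccp}) guarantees that all the manipulations above are legitimate for $t,s>0$.
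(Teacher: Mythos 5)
Your overall strategy (cutoff/Green identity on large balls, then kill the two annulus error terms using the Gaussian upper bound, a Caccioppoli inequality coming from $\Delta_\phi h\ge 0$, and the mean value inequality to convert $L^1$-control of $h$ into pointwise control) is the same as the paper's, and your treatment of the term $\int u\,\langle\nabla(\chi_r^2),\nabla h\rangle\,d\mu$ matches the paper's Steps 1--2 (Caccioppoli for $h$ on the annulus plus the sup bound \eqref{epmvp} and the kernel decay). However, there is a genuine gap in the other term: you dispose of $\tfrac{C}{r}\int_{\text{annulus}} h\,|\nabla_y u|\,d\mu$ by asserting that ``by standard parabolic estimates'' $|\nabla_y H_\phi|$ and $|\Delta_{\phi_y}H_\phi|$ enjoy pointwise Gaussian decay comparable to $H_\phi$ itself. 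No such pointwise gradient (or Laplacian) estimate for the weighted heat kernel is available here: the operator $\Delta_\phi=\Delta-\langle\nabla\phi,\nabla\cdot\rangle$ has an unbounded drift ($\phi$ is bounded but $\nabla\phi$ is not), the curvature hypothesis is the $N$-Ricci bound with $\varepsilon$-range rather than a classical pointwise lower Ricci bound, and the paper itself only obtains a Li--Yau type gradient estimate much later (Section 7), on compact manifolds and under an extra $L^p$ hypothesis on $|\nabla\phi|^2$. So local regularity would at best give constants depending on local geometry that are not uniform over annuli going to infinity, and the claimed Gaussian decay of $|\nabla_y u|$ is unjustified.

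The paper avoids this entirely by proving an \emph{integrated} annulus bound: testing $|\nabla H_\phi|^2$ against a cutoff gives
\[
\int_{B_o(R+1)\setminus B_o(R)}|\nabla H_\phi|^2\,d\mu \le 12\int_{\text{annulus}}H_\phi^2\,d\mu + 2\Bigl(\int_{\text{annulus}}H_\phi^2\,d\mu\Bigr)^{1/2}\Bigl(\int_M(\Delta_\phi H_\phi)^2\,d\mu\Bigr)^{1/2},
\]
where $\int_{\text{annulus}}H_\phi^2\le \sup_{\text{annulus}}H_\phi$ by stochastic completeness (Corollary \ref{sccp}) and $\int_M(\Delta_\phi H_\phi)^2\,d\mu\le C t^{-2}H_\phi(x,x,t)$ is quoted from \cite{WW2}; Cauchy--Schwarz with $V_o(R+1)$ then gives an $L^1$ bound on $|\nabla H_\phi|$ over the annulus with Gaussian decay in $R$, which beats the exponential growth and the sup bound on $h$. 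You would need to incorporate this (or prove an actual kernel gradient estimate) to close your argument. Two smaller points: your fallback via truncation $h\wedge L$ addresses the $\nabla h$ term, not the problematic $\nabla u$ term, so it does not repair the gap; and the paper first proves the identity for small $t$ and then uses the semigroup property to reach $H_\phi(\cdot,\cdot,t+s)$ for all times, which is the precise role of the semigroup step that your closing sentence leaves vague.
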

	\begin{proof}
		By the Green formula on $B_{o}(R)$, we have
		\begin{equation}\label{Grf}
			\begin{aligned}
				&|\int_{B_{o}(R)}\Delta_{\phi_{y}} H_{\phi}(x,y,t)h(y)- H_{\phi}(x,y,t)\Delta_{\phi_{y}}h(y)d\mu_{y}|\\
				\le&|\int_{\pa B_{o}(R)}\pa_{r} H_{\phi}(x,y,t)h(y)- H_{\phi}(x,y,t)\pa_{r}h(y)d\mu_{\sigma,\ R}(y)|\\
				\le&\int_{\pa B_{o}(R)}|\nabla H_{\phi}(x,y,t)|h(y)d\mu_{\sigma,\ R}(y)+ \int_{\pa B_{o}(R)}H_{\phi}(x,y,t)|\nabla h(y)|d\mu_{\sigma,\ R}(y),\\
			\end{aligned}
		\end{equation}
		where $d\mu_{\sigma,\ R}(y)$ denotes the weighted area measure induced by $d\mu$ on $\pa B_{o}(R)$. We shall show that the above two boundary
		integrals vanish as $R\to\infty$.
		
		Step 1. Consider a large $R$ and denote $h(x,t)\equiv h(x)$ for any $(x,t)\in M^n\times [0,+\infty)$. Then the parabolic mean value inequality \eqref{pbmvp} deduces that 
		\begin{equation}\label{epmvp}
			\underset{B_{o}(R)}{sup}\{h(x)\}\le\frac{(2)^{2+\nu}E_2 \exp\left(2D_2 \sqrt{K_{\eps}(q,20R)} R\right)}{V_{o}(2R)}\int_{B_{o}(2R)}hd\mu dt.
		\end{equation}
		Let $\psi(y)=\psi(r(y))$ be a nonnagetive cut-off function satisfying $0\le\psi\le1$, $|\nabla\psi|\le\sqrt{3}$, $\psi(y)=1$ on $B_{o}(R+1)/B_{o}(R)$ and $\psi(y)=0$ on $B_{o}(R-1)\cup (M^{n}\setminus B_{o}(R+2))$. By the Cauchy-Schwarz inequality, we have
		\begin{equation}
			\begin{aligned}
				0 & \le \int_{M^n} h \Delta_\phi h \psi^2 d\mu \\
				& = -2 \int_{M^n} \psi h \langle \nabla h, \nabla\psi \rangle d\mu - \int_{M^n} |\nabla h|^2 \psi^2 d\mu \\
				& \le 2 \int_{M^n} |\nabla \psi|^{2} h^2 d\mu - \frac{1}{2} \int_{M^n} |\nabla h|^2 \psi^2 d\mu.
			\end{aligned}
		\end{equation}
		
		It then follows from \eqref{epmvp} that
		\begin{equation}
			\begin{aligned}
				\int_{B_{o}(R+1) \setminus B_{o}(R)} |\nabla h|^2 d\mu & \le 4 \int_{M^n} |\nabla \psi|^{2} h^2 d\mu \\
				& \le 12 \int_{B_{o}(R+2)} h^2 d\mu \\
				& \le 12 \underset{B_{o}(R+2)}{\sup} h \cdot ||h||_{L^{1}_{\mu}} \\
				& \le \frac{(2)^{2+\nu} E_2 \exp\left(D_2 \sqrt{K_{\eps}(q, 20(R+2))} (2R+4)\right)}{V_{o}(2R+4)} ||h||_{L^{1}_{\mu}}^2.
			\end{aligned}
		\end{equation}
		
		On the other hand, the Cauchy-Schwarz inequality implies that
		\begin{equation}
			\int_{B_{o}(R+1)\setminus B_{o}(R)} |\nabla h| d\mu\le(\int_{B_{o}(R+1)/B_{o}(R)} |\nabla h|^2 d\mu)^{1/2}(V_o(R+1)-V_o(R))^{1/2}
		\end{equation}
		Combining the above two inequalities, we have
		\begin{equation}
			\int_{B_{o}(R+1)\setminus B_{o}(R)} |\nabla h| d\mu \le (2)^{(2+\nu)/2} E_8 \exp\left(D_8 \sqrt{K_{\eps}(q, 20(R+2))} (R+2)\right) ||h||_{L^{1}_{\mu}}.
		\end{equation}
		
		Step 2. By letting $\eps=1/4$, the inequality \eqref{ubhk2} turns to
		\begin{equation}
			H_{\phi}(x,y,t)\le\frac{C_{12}(n,\nu,a,b) \exp\left(C_{13}(n,\nu,a,b,K) R\right)}{V_{x}(\sqrt{t})}(1+\frac{2R+1}{\sqrt{t}})^{(1+c)/2c}exp(-\frac{d^2(x,y)}{5t}),
		\end{equation}
		for all $x$, $y$ $\in B_{o}(R/4)$ and $0<t<R^2/16$.

		\begin{equation}
			\begin{aligned}
				J_1:&=\int_{B_{o}(R+1) \setminus B_{o}(R)} H_{\phi}(x,y,t) |\nabla h(y)| d\mu\\ & \le \underset{y \in B_{o}(R+1) \setminus B_{o}(R)}{\sup} H(x,y,t) \int_{B_{o}(R+1) \setminus B_{o}(R)} |\nabla h(y)| d\mu \\
				& \le \frac{C_{14}(n,\nu,a,b,||h||_{L^{1}_{\mu}}^{2}) \exp\left(C_{15}(n,\nu,a,b,K) R\right)}{V_{x}(\sqrt{t})}(1+\frac{d(x,y)}{\sqrt{t}})^{(1+c)/2c}exp(-\frac{(R-d(x,o))^2}{5t}).
			\end{aligned}
		\end{equation}
		Thus, for $T$ sufficiently small, all $t\in(0,T)$ and $d(x,o)\le R/8$, $J_1\to 0$ when $R\to\infty$.
		
		Step 3. We show that $\int_{B_{o}(R+1) \setminus B_{o}(R)} |\nabla H_{\phi}|(x,y,t)h(y) d\mu(y)\to 0$ as $R\to\infty$.  Frist, consider the integral
		
		\begin{equation}
			\begin{aligned}
				\int_{M} |\nabla H_{\phi}(x,y,t)|^2 \psi^2(y) \, d\mu &= -2 \int_{M} \langle H_{\phi}(x,y,t) \nabla \psi, \psi \nabla H_{\phi}(x,y,t) \rangle \, d\mu \\
				&\quad - \int_{M} H_{\phi}(x,y,t) \Delta_\phi H_{\phi}(x,y,t) \psi^2(y) \, d\mu \\
				&\le 2 \int_{M} |\nabla \psi|^2 H_{\phi}^{2}(x,y,t) \, d\mu + \frac{1}{2} \int_{M} |\nabla H_{\phi}(x,y,t)|^2 \psi^2(y) \, d\mu \\
				&\quad - \int_{M} H_{\phi}(x,y,t) \Delta_\phi H_{\phi}(x,y,t) \psi^2(y) \, d\mu,
			\end{aligned}
		\end{equation}
		which implies
		\begin{equation}\label{nblH}
			\begin{aligned}
				&\int_{B_{o}(R+1) \setminus B_{o}(R)} |\nabla H_{\phi}|^2 d\mu\\ &\le \int_{M^n} |\nabla H_{\phi}| ^2\psi^2 d\mu \\
				& \le 4 \int_{M} |\nabla \psi|^2 H_{\phi}^2-2\int_{M}\psi^2H\Delta_\phi H_{\phi} d\mu \\
				& \le 12 \int_{B_{o}(R+1) \setminus B_{o}(R)}  H_{\phi}^2 d\mu+2	\int_{B_{o}(R+1) \setminus B_{o}(R)} H_{\phi}|\Delta_\phi H_{\phi}| d\mu \\
				& \le 12 \int_{B_{o}(R+1) \setminus B_{o}(R)}  H_{\phi}^2 d\mu+2 (\int_{B_{o}(R+1) \setminus B_{o}(R)}  H_{\phi}^2 d\mu)^{1/2}(\int_{B_{o}(R+1) \setminus B_{o}(R)}  |\Delta_\phi H|^2 d\mu)^{1/2}.
			\end{aligned}
		\end{equation}
		We already prove that $(M^n, g, \mu)$ is stochastically complete, which means
		\begin{equation}
			\int_{M}H_{\phi}(x,y,t)d\mu(y)=1.
		\end{equation}
		Then with Corollary \ref{ubhk2}, we can obtain that
		\begin{equation}\label{ineqH2}
			\begin{aligned}
				&\int_{B_{o}(R+1) \setminus B_{o}(R)}  H_{\phi}^2 d\mu \le \underset{y\in B_{o}(R+1)\setminus B_{o}(R)}{\sup}H_{\phi}(x,y,t)\\
				&\le \frac{C_{12} \exp\left(C_{13} R\right)}{V_{x}(\sqrt{t})}(1+\frac{d(x,o)+R+2}{\sqrt{t}})^{(1+c)/2c}exp(-\frac{(R-1-d(x,o)^2)}{5t})\\
			\end{aligned}
		\end{equation}
		From (4.7) in \cite{WW2}, there exists a constant $C>0$ such that 
		\begin{equation}\label{ineqhk}
			\int_{M}(\Delta_\phi H_{\phi})^2d\mu\le \frac{C_{16}}{t^2}H_{\phi}(x,x,t).
		\end{equation}
		Combining \eqref{nblH}, \eqref{ineqH2} and \eqref{ineqhk}, we obtain
		\begin{equation}
			\begin{aligned}
				\int_{B_{o}(R+1) \setminus B_{o}(R)} |\nabla H_{\phi}|^2 d\mu\le &C_{17}e^{C_{18}R-\frac{(R-1-d(x,o))^2}{10t}}[V_{x}(\sqrt{t})^{-1}+V_{x}(\sqrt{t})^{-1/2}t^{-1}H^{1/2}(x,x,t)]^{1/2}\\
				&\times(1+\frac{d(x,o)+R+2}{\sqrt{t}})^{(1+c)/2c},
			\end{aligned}
		\end{equation}
		where $C_{17}$ and $C_{18}$ are some constants not depends on $R$. By the Cauchy-Schwarz inequality, we get
		\begin{equation}\label{nbhk3}
			\begin{aligned}
				&\int_{B_{o}(R+1) \setminus B_{o}(R)} |\nabla H_{\phi}| d\mu\le [V_{o}(R+1)-V_{o}(R)]^{1/2}(\int_{B_{o}(R+1) \setminus B_{o}(R)} |\nabla H_{\phi}|^2 d\mu)^{1/2}\\
				&\le V_{o}(R+1)^{1/2}C_{19}e^{C_{20}R-\frac{(R-1-d(x,o))^2}{20t}}[V_{x}(\sqrt{t})^{-1}+V_{x}(\sqrt{t})^{-1/2}t^{-1}H^{1/2}(x,x,t)]^{1/2}\\
				&\times(1+\frac{d(x,o)+R+2}{\sqrt{t}})^{(1+c)/4c},
			\end{aligned}
		\end{equation}
		Therefore, by \eqref{epmvp} and \eqref{nbhk3}, we have 
		\begin{equation}
			\begin{aligned}
				J_{2}:&= \int_{B_{o}(R+1) \setminus B_{o}(R)} |\nabla H_{\phi}|(x,y,t)h(y) d\mu(y)\\
				&\le \underset{y\in B_{o}(R+2)\setminus B_{o}(R+1)}{\sup}h(y)\int_{B_{o}(R+1) \setminus B_{o}(R)} |\nabla H_{\phi}|(x,y,t) d\mu(y)\\
				&\le C_{21}e^{C_{22}R-\frac{(R-1-d(x,o))^2}{20t}}||h||_{L^{1}(\mu)}[V_{x}(\sqrt{t})^{-1}+V_{x}(\sqrt{t})^{-1/2}t^{-1}H^{1/2}(x,x,t)]^{1/2}\\
				&\times (1+\frac{d(x,o)+R+2}{\sqrt{t}})^{(1+c)/4c}.
			\end{aligned}
		\end{equation}
		Similar to the case of $J_1$, for $T$ sufficiently small, all $t\in(0,T)$ and $d(x,o)\le R/8$, $J_2\to 0$ when $R\to\infty$.
		
		Step 4.  By the mean value theorem, for all $R>0$ there exists $\bar{R}\in (R,R+1)$ such that
		\begin{equation}
			\begin{aligned}
				J:&=\int_{\pa B_{o}(\bar{R})}|\nabla H_{\phi}(x,y,t)|h(y)d\mu_{\sigma,\ R}(y)+ \int_{\pa B_{o}(\bar{R})}H_{\phi}(x,y,t)|\nabla h(y)|d\mu_{\sigma,\ R}(y)\\
				&=\int_{B_{o}(R+1) \setminus B_{o}(R)} |\nabla H_{\phi}|(x,y,t)h(y) d\mu(y)+\int_{B_{o}(R+1) \setminus B_{o}(R)} H_{\phi}(x,y,t) |\nabla h(y)| d\mu\\
				&=J_{2}+J_{1}.
			\end{aligned}
		\end{equation}
		From step 2 and step 3, we know that for sufficiently small $t$, $J\to 0$ when $R\to\infty$, which means this Proposition holds for sufficiently small $t$.
		
		Step 5. For all $t\in(0,T)$ and $s\in(0,+\infty)$, using the semigroup property of the weighted heat kernel, we have
		\begin{equation}
			\begin{aligned}
				\int_{M}\Delta_{\phi_{y}} H_{\phi}(x,y,t+s)h(y)d\mu(y)&=\int_{M}\int_{M} H_{\phi}(x,z,t)\Delta_{\phi_{y}}H_{\phi}(z,y,s)d\mu(z)\ h(y)d\mu(y)\\
				&=\int_{M}\int_{M} \Delta_{\phi_{y}}H_{\phi}(z,y,s)h(y)d\mu(y)\ H_{\phi}(x,z,t)d\mu(z)\\
				&=\int_{M}\int_{M} H_{\phi}(z,y,s)\Delta_{\phi_{y}}h(y)d\mu(y)\ H_{\phi}(x,z,t)d\mu(z)\\
				&=\int_{M} H_{\phi}(x,y,t+s)\Delta_{\phi_{y}}h(y)d\mu(y).
			\end{aligned}
		\end{equation}
		Then for any $t>0$, \eqref{Grf} always holds, which finish the proof of this proposition.
	\end{proof}
	
	Next we prove the $L_{\phi}^{1}$ Liouville theorem,
	
	\begin{proof}[(Proof of Theorem\ref{liouville}):]
		Let $h(x)$ be a nonnegative $\phi-$subharmonic function, $L_{\phi}^{1}(\mu)$ integrable on $M^n$. Denote a space-time function
		$$h(x,t)=\int_{M^n}h(y)H_{\phi}(x,y,t)d\mu(y)$$
		with initial data $h(x,0)=h(x)$. From previous Proposition, we conclude that
		\begin{equation}
			\begin{aligned}
				\pa_t h(x,t)&=\int_{M}\Delta_{\phi_{y}} H_{\phi}(x,y,t)h(y)d\mu(y)\\
				&=\int_{M} H_{\phi}(x,y,t)\Delta_{\phi_{y}}h(y)d\mu(y)\\
				&\ge 0.\\
			\end{aligned}
		\end{equation}
		that is, $h(x, t)$ is increasing in $t$. In addition, we already prove that 
		$$\int_{M}H(x,y,t)d\mu(y)=1$$
		for any $x\in M^n$ and $t>0$. So we have
		\begin{equation}
			\int_{M}h(x,t)d\mu(x)=\int_{M}\int_{M^n}h(y)H_{\phi}(x,y,t)d\mu(x)d\mu(y)=	\int_{M}h(y)d\mu(y).
		\end{equation}
		Since $h(x, t)$ is increasing in $t$, so $h(x, t) = h(x)$ and hence $h(x)$ is a nonnegative
		$\phi -$harmonic function, i.e. $\Delta_\phi h=0$.
		
		On the other hand, for any positive constant $A$, let us define a new function $u(x) =
		min\{h(x), A\}$. Then $u$ satisfies
		$$0\le u(x)\le h(x),\  |\nabla u|\le|\nabla h|\  \text{and}\  \Delta_\phi u\le 0.$$
		By the same argument, we have that $\Delta_\phi u=0$.
		
		By the regularity theory of  $\phi-$harmonic functions, this is impossible unless $h = u$ or
		$u = A$. Since $A$ is arbitrary and $h$ is nonnegative, so $h$ must be identically constant. The
		theorem then follows from the fact that the absolute value of a $\phi-$harmonic function
		is a nonnegative $\phi-$subharmonic function.
	\end{proof}
	\begin{remark}
		With this Theorem, we remove the restriction of $\frac{b}{a}$ in \cite{Fujitani2} and derive the general $L_{\phi}^{1}$ Liouville theorem of weighted Riemannian manifolds under lower \(N\)-Ricci curvature bound with \(\varepsilon\)-range.
	\end{remark}
	With the $L^{1}(\phi)-$Liouville property, one can prove the uniqueness of  $L^{1}(\phi)-$ solution of the weighted heat equation.
	\begin{theorem}\label{L1uniq}
		Let $(M^n, g, \mu)$ be a complete noncompact weighted Riemannian manifold with a lower \(N\)-Ricci curvature bound and \(\varepsilon\)-range. Assume \(\phi\) satisfies 
		$$0 < a \le e^{\frac{2(1-\eps)\phi(x)}{n-1}} \le b.$$ 
		If $h(x, t)$ is a non-negative function defined on $M\times[0, +\infty)$ satisfying
		$$(\Delta_\phi-\pa_t)h(x,t)\ge 0,\   \int_{M}h(x,t)d\mu(x)<\infty$$ 
		for all $t > 0$, and $\lim_{t\to0}\int_{M}u(x,t)d\mu(x)=0$, then $u(x,t)\equiv 0$.
		
		In particular, any $L^{1}_{\phi}-$solution of the heat equation is uniquely determined by its initial data in $L^{1}(\mu)$.
	\end{theorem}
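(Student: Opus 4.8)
The plan is to fix $x\in M^n$ and $t>0$ and prove $h(x,t)=0$ by testing $h$ against the $\phi$-heat kernel. For $0<s<t$ set
$$\Phi(s):=\int_{M^n}H_\phi(x,y,t-s)\,h(y,s)\,d\mu(y),$$
which is finite because the Gaussian upper bound of Theorem \ref{hkestm} together with the volume comparison Corollary \ref{col2} gives $\sup_{y\in M^n}H_\phi(x,y,\tau)<\infty$ for each $\tau>0$, with a bound uniform for $\tau$ in a compact subinterval of $(0,\infty)$, while $h(\cdot,s)\in L^1(\mu)$ by hypothesis. Since $H_\phi(x,\cdot,\tau)$ solves the $\phi$-heat equation in its second variable (by the symmetry $H_\phi(x,y,\tau)=H_\phi(y,x,\tau)$), one has $\partial_s H_\phi(x,y,t-s)=-\Delta_{\phi,y}H_\phi(x,y,t-s)$, so formally
$$\Phi'(s)=\int_{M^n}H_\phi(x,y,t-s)\big(\partial_s h-\Delta_{\phi,y}h\big)(y,s)\,d\mu(y)\le 0,$$
because $h$ is a subsolution. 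Thus $\Phi$ is non-increasing on $(0,t)$, and the theorem follows once the one-sided limits of $\Phi$ are identified.

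The main technical point is to justify the computation of $\Phi'$, i.e.\ the integration by parts
$$\int_{M^n}\Delta_{\phi,y}H_\phi(x,y,\tau)\,h(y,s)\,d\mu(y)=\int_{M^n}H_\phi(x,y,\tau)\,\Delta_{\phi,y}h(y,s)\,d\mu(y);$$
this is done exactly as in Proposition \ref{prop1}, working with the integrated form $\Phi(s_2)-\Phi(s_1)=\int_{s_1}^{s_2}\Phi'(s)\,ds$ so that only space-time integrals appear. Green's formula on $B_o(R)$ reduces the matter to showing that
$$\int_{s_1}^{s_2}\!\!\int_{\partial B_o(R)}\Big(|\nabla_y H_\phi(x,y,t-s)|\,h(y,s)+H_\phi(x,y,t-s)\,|\nabla_y h(y,s)|\Big)\,d\mu_{\sigma,R}(y)\,ds\longrightarrow 0$$
along a sequence $R\to\infty$, picked via the mean value theorem so that the boundary integral is dominated by an annular integral over $B_o(R+1)\setminus B_o(R)$. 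On such annuli $\sup h(\cdot,s)$ and $\int_{s_1}^{s_2}\!\int|\nabla_y h|^2\,d\mu\,ds$ grow at most exponentially in $R$ — the former by the parabolic mean value inequality \eqref{pbmvp} for subsolutions combined with the volume comparison (Corollary \ref{col2}), the latter by a Caccioppoli estimate (test $\Delta_\phi h\ge\partial_s h$ with $h\psi^2$ and integrate in $s$ to absorb the time derivative) — whereas $H_\phi(x,\cdot,t-s)$ and $|\nabla_y H_\phi(x,\cdot,t-s)|$ decay like $e^{-cR^2/t}$ on annuli far from $x$, the former by the Gaussian bound of Theorem \ref{hkestm} and the latter from the Caccioppoli estimate for $H_\phi$ used in the proof of Proposition \ref{prop1} (see \eqref{ineqhk}) together with Corollary \ref{ubhk2}. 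The Gaussian decay dominates the exponential volume growth, so the annular integrals vanish and $\Phi(s_2)\le\Phi(s_1)$.

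It remains to compute the limits. As $s\to t^-$ the kernel $H_\phi(x,\cdot,t-s)$ concentrates at $x$: splitting
$$\Phi(s)=h(x,t)\int_{M^n}H_\phi(x,y,t-s)\,d\mu(y)+\int_{M^n}H_\phi(x,y,t-s)\big(h(y,s)-h(x,t)\big)\,d\mu(y),$$
the first term tends to $h(x,t)$ by stochastic completeness (Corollary \ref{sccp}), and the second to $0$ using continuity of $h$ near $(x,t)$ and, far from $x$, the Gaussian decay of $H_\phi$ against $h(\cdot,s)\in L^1(\mu)$; hence $\Phi(t^-)=h(x,t)$. As $s\to 0^+$, $\Phi(s)\le\big(\sup_{y}H_\phi(x,y,t-s)\big)\,\|h(\cdot,s)\|_{L^1(\mu)}\to 0$, since the supremum is bounded for $s\in[0,t/2]$ and $\|h(\cdot,s)\|_{L^1(\mu)}\to 0$ by hypothesis. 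Monotonicity of $\Phi$ then forces $0\le h(x,t)=\Phi(t^-)\le\Phi(0^+)=0$, so $h\equiv 0$. Finally, if $u_1,u_2$ are $L^1_\phi$ solutions of the $\phi$-heat equation with the same $L^1(\mu)$ initial data, then $h:=|u_1-u_2|$ is a non-negative subsolution with $\|h(\cdot,t)\|_{L^1(\mu)}\le\|u_1(\cdot,t)\|_{L^1(\mu)}+\|u_2(\cdot,t)\|_{L^1(\mu)}<\infty$ and $\|h(\cdot,t)\|_{L^1(\mu)}\to 0$ as $t\to0$, so the first part gives $u_1\equiv u_2$.
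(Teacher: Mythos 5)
Your proof is correct in outline, but it takes a genuinely different route from the paper's. The paper does not re-derive a parabolic duality: it sets $h_{\eps}(x,t)=\int_M H_\phi(x,y,t)h(y,\eps)\,d\mu(y)$ and $F_{\eps}=\max\{0,\,h(\cdot,\cdot+\eps)-h_{\eps}\}$, integrates $F_{\eps}$ in time over $[0,T]$ to produce a non-negative $L^1(\mu)$ $\phi$-subharmonic function, and invokes the already-established $L^1_\phi$-Liouville theorem (Theorem \ref{liouville}) to force $F_{\eps}\equiv 0$, i.e. $h(\cdot,t+\eps)\le h_{\eps}(\cdot,t)$; the kernel upper bound together with $\|h(\cdot,\eps)\|_{L^1(\mu)}\to 0$ then gives $h\le 0$. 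In that scheme all the delicate boundary analysis stays in the static setting already handled in Proposition \ref{prop1}, and the parabolic content is outsourced to the Liouville theorem. You instead prove monotonicity of $\Phi(s)=\int_M H_\phi(x,y,t-s)h(y,s)\,d\mu(y)$ directly, which obliges you to redo the Proposition \ref{prop1}-type cutoff/annulus argument for a time-dependent subsolution (sup bounds on annuli from \eqref{pbmvp}, a space-time Caccioppoli estimate for $\nabla h$, gradient decay of $H_\phi$); this is more work, but it bypasses Theorem \ref{liouville} entirely and uses only the Gaussian bounds of Theorem \ref{hkestm}, Corollary \ref{ubhk2} and stochastic completeness (Corollary \ref{sccp}), so it would survive even without the Liouville step. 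Two small remarks: (i) your identification $\Phi(s)\to h(x,t)$ as $s\to t^{-}$ needs $\sup_{s\in[t/2,t)}\|h(\cdot,s)\|_{L^1(\mu)}<\infty$ to control the far-field term, which is not literally among the hypotheses; it is cleaner to note that monotonicity plus $\Phi(0^{+})=0$ gives $\Phi(s)\le 0$ for every $s\in(0,t)$, and since $H_\phi>0$ and $h\ge 0$ this already forces $h(\cdot,s)\equiv 0$, so the delicate $s\to t^{-}$ limit can be dispensed with; (ii) like the paper's own proof, you tacitly use local integrability in time of $\|h(\cdot,s)\|_{L^1(\mu)}$ when estimating the space-time integrals, an assumption both arguments share.
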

	
	\begin{proof}
		Let $h(x,t)\in L^{1}_{\mu}$ be a non-negative function satisfying the assumptions in 
		Theorem. For $\eps>0$, we define a space-time function
		$$h_{\eps}(x,t)=\int_{M}H(x,y,t)h(y,\eps)d\mu(y)$$
		and
		$$F_{\eps}(x,t)=max\{0,h(x,t+\eps)-h_{\eps}(x,t)\}.$$
		Then $F_{\eps}(x,t)$ is non-negative and satisfies
		$$\lim_{t\to0}F_{\eps}(x,t)=0,\ (\Delta_\phi-\pa_t)F_{\eps}(x,t)\ge0.$$
		Fixed $T>0$, let $u(x)=\int_{0}^{T}F_{\eps}(x,t)dt$, which implies
		\begin{equation}\label{Deux}
			\Delta_\phi u(x)=\int_{0}^{T}\Delta_\phi F_{\eps}(x,t)dt\ge\int_{0}^{T}\pa_t F_{\eps}(x,t)dt=F_{\eps}(x,T)\ge 0,
		\end{equation}
		and
		\begin{equation}
			\begin{aligned}
				\int_{M}u(x)d\mu(x)&=\int_{0}^{T}\int_{M} F_{\eps}(x,t)d\mu(x)dt\le\int_{0}^{T}\int_{M} |h(x,t+\eps)-h_{\eps}(x,t)|d\mu(x)dt\\
				&\le \int_{0}^{T}\int_{M} h(x,t+\eps)d\mu(x)dt+\int_{0}^{T}\int_{M} h_{\eps}(x,t)d\mu(x)dt< \infty.
			\end{aligned}
		\end{equation}
		where the first term on the right hand is finite from our assumption, and the second term
		is finite because the semigroup is contractive on $L^{1}(\mu)$. Therefore, $u(x)$ is a non-negative $L^{1}(\mu)$ integrable subharmonic function. By the Liouville Theorem we already prove, $u(x)$ must be constant. Combining with \eqref{Deux} we have $F_{\eps}(x,t)=0$ for all $x\in M^n$ and $t>0$, which implies
		\begin{equation}\label{large}
			h(x,t+\eps)\le h_{\eps}(x,t).
		\end{equation}
		Next we estimate the function $h_{\eps}(x,t)$. Applying the upper bound estimate \eqref{ubhk2} of the weighted heat kernel $H_{\phi}(x, y, t)$ and letting $\eps= 1/4,\ R = 2d(x,y)+1$, we have
		\begin{equation}
			h_{\eps}(x,t)\le \frac{C_{24}}{V_{x}(\sqrt{t})}\int_{M}e^{C_{25}d(x,y)-\frac{d^{2}(x,y)}{5t}}(1+\frac{d(x,y)}{\sqrt{t}})^{(1+c)/2c}h(y,\eps)d\mu(y)
		\end{equation}
		For sufficiently small values of $t > 0$, the right-hand side can be estimated by
		\begin{equation}
			h_{\eps}(x,t)\le \frac{C_{25}}{V_{x}(\sqrt{t})}\int_{M}h(y,\eps)d\mu(y)
		\end{equation}
		Hence as $\eps\to 0$, $h_{\eps}(x,t)\to 0$ since $\int_{M}h(y,\eps)d\mu(y)\to 0$.However, by the semigroup property,
		$h_{\eps}(x,t)\to 0$ for all $x\in M$ and $t > 0$. Combining with \eqref{large} we get $h(x, t)\le 0$. Therefore $h(x,t)\equiv 0$.
		
		To prove that any $L^{1}_{\phi}-$ solution of the heat equation is uniquely determined by its initial data in $L^{1}(\mu)$, we suppose that $u_{1}(x, t),\ u_{2}(x, t)$ are two $L^{1}(\mu)-$integrable solutions of the weighted heat equation $(\Delta_\phi-\pa_t)u(x,t)=0$ with the initial data $u(x,0)\in L^1(\mu)$. Applying this above result to $v(x,t)=|u_{1}(x, t)-u_{2}(x, t)|$, we see that $v(x,t)\equiv 0$, which finish the proof of this Theorem.
	\end{proof}
	\section{Eigenvalue estimates}
	In this section, we derive lower bound estimations of eigenvalues of the weighted Laplacian operator $\Delta_\phi$ on compact weighted Riemannian manifolds by using the upper bound of the $\phi-$heat kernel and an argument of Li-Yau\cite{LiYau1986}.
	
	First, let's assume that $N-Ricci$ is nonnegative
	\begin{theorem}\label{lbeg}
		Let $(M^n, g, \mu)$ be a compact weighted Riemannian manifold under lower $N-Ricci\ curvature$ in $\eps-range$. Assume $\phi$ satisfies 
		$$0<a\le e^{\frac{2(1-\eps)\phi(x)}{n-1}}\le b ,$$
		 and let $K=0$. Denote the eigenvalues of $\Delta_\phi$ by $0=\lambda_{0}<\lambda_{1}\le \lambda_{2}\le\ldots$. Then, the following lower bound estimate of $\lambda_{k}$ holds
		\begin{equation}
			\lambda_{k}\ge \frac{C_{26}(k+1)^{2c/(c+1)}}{d^{2}},
		\end{equation}
		where $C_{26}=c_{25}(n,\nu)(\frac{b}{a})^{c_{26}(n,\nu)}$.
	\end{theorem}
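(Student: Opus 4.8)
The plan is to run the classical heat-trace argument of Li--Yau \cite{LiYau1986}: bound the partition function $\sum_{k\ge 0}e^{-\lambda_k t}$ from above using the on-diagonal bound for $H_\phi$, and then recover a lower bound for each individual $\lambda_k$ by keeping the first $k+1$ terms and optimizing in $t$. Since $M^n$ is compact, $\Delta_\phi$ has discrete spectrum and $H_\phi$ admits the eigenfunction expansion $H_\phi(x,y,t)=\sum_{k\ge 0}e^{-\lambda_k t}\psi_k(x)\psi_k(y)$ with $\{\psi_k\}$ an $L^2(\mu)$-orthonormal basis; integrating the diagonal gives the trace formula $\sum_{k\ge 0}e^{-\lambda_k t}=\int_{M^n}H_\phi(x,x,t)\,d\mu(x)$. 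Specializing the upper bound of Theorem \ref{hkestm} to $x=y$ and $K=0$ (so $K_\varepsilon\equiv 0$, and on the diagonal there is no Gaussian factor to optimize), one gets $H_\phi(x,x,t)\le c_0\,V_x(\sqrt t)^{-1}$ with $c_0$ depending only on $n,\nu,a,b$.

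The next step converts the \emph{relative} volume comparison into a \emph{uniform absolute} lower bound. Write $d=\diam(M^n)$. For $0<\sqrt t\le d$ one has $V_x(d)=\mu(M^n)$, so Corollary \ref{col1} with $K=0$, $r=d$, $s=\sqrt t$ gives $V_x(\sqrt t)\ge (a/b)^{(1+2c)/c}(\sqrt t/d)^{(1+c)/c}\mu(M^n)$ for every $x\in M^n$. Substituting into the trace formula and integrating the resulting constant over $M^n$,
\[
\sum_{k\ge 0}e^{-\lambda_k t}\le C_{27}\left(\frac{d^2}{t}\right)^{(1+c)/(2c)},\qquad 0<t\le d^2 ,
\]
with $C_{27}=c(n,\nu)(b/a)^{c'(n,\nu)}$; note that the exponent equals $\nu/2$ when $c<1$, which is exactly what yields the power $2c/(c+1)$ in the conclusion.

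Discarding all but the first $k+1$ terms gives $(k+1)e^{-\lambda_k t}\le C_{27}(d^2/t)^{(1+c)/(2c)}$. If $\lambda_k\ge d^{-2}$, choose $t=\lambda_k^{-1}\le d^2$ to obtain $k+1\le e\,C_{27}(d^2\lambda_k)^{(1+c)/(2c)}$, i.e.\ $\lambda_k\ge C_{26}(k+1)^{2c/(c+1)}d^{-2}$ with $C_{26}=(e\,C_{27})^{-2c/(c+1)}=c_{25}(n,\nu)(b/a)^{c_{26}(n,\nu)}$, which is the asserted form. The residual case $\lambda_k<d^{-2}$ can occur only for finitely many $k$: taking $t=d^2$ in the displayed inequality forces $k+1<e\,C_{27}$, and for these indices one invokes the spectral-gap bound $\lambda_1\ge c_1 d^{-2}$ coming from the Neumann--Poincar\'e inequality (Theorem \ref{npineq}) with $R=d$, then shrinks $C_{26}$ by a factor depending only on $n,\nu,b/a$ so that $\lambda_k\ge\lambda_1\ge c_1 d^{-2}\ge C_{26}(k+1)^{2c/(c+1)}d^{-2}$ on this finite range.

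The argument is essentially routine once Theorem \ref{hkestm} and Corollary \ref{col1} are available; the one step that genuinely needs care is the volume estimate, where compactness is used to upgrade the scale-invariant comparison into a true lower bound $V_x(\sqrt t)\gtrsim(\sqrt t/d)^{(1+c)/c}\mu(M^n)$ that is uniform in $x$, together with the exponent bookkeeping that makes the reciprocal power come out to be exactly $2c/(c+1)$.
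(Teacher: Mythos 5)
Your proposal is correct and follows essentially the same route as the paper: expand the trace $\sum_k e^{-\lambda_k t}=\int_M H_\phi(x,x,t)\,d\mu$, bound the diagonal kernel by $V_x(\sqrt t)^{-1}$ using the $K=0$ case of the Gaussian upper bound, convert via Corollary \ref{col1} with $r=d$ into $(d^2/t)^{(1+c)/(2c)}$, and optimize in $t\sim\lambda_k^{-1}$. Your extra case analysis for $\lambda_k<d^{-2}$ (finitely many indices, handled by the Poincar\'e spectral gap $\lambda_1\gtrsim d^{-2}$) is a legitimate refinement of a point the paper's choice $t_0=n/(2\lambda_k)$ passes over without comment.
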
 
	\begin{proof}
		Since $Ric_N^{\phi}\ge 0$, from Corollary \ref{ubhk2}, we know that the upper bound of the diagonal $\phi-$heat kernel satisfies
		\begin{equation}\label{ubdghk}
			H(x,x,t)\le \frac{c_{23}(n,\nu)(\frac{b}{a})^{c_{24}(n,\nu)}}{V_x(\sqrt{t})}.
		\end{equation}
		Notice that $\phi-$ heat kernel can be written as 
		$$H_{\phi}(x,y,t)=\sum_{i=0}^{\infty}e^{-\lambda_{i}t}\varphi_{i}(x)\varphi_{i}(y),$$
		where $\varphi_{i}$ is the eigenfunction of $\Delta_\phi$ corresponding to $\lambda_{i}$, $||\varphi_i||_{L^{2}(\mu)}=1$. By Corollary \ref{col1}, we know that
		$$\frac{V_{x}(d)}{V_{x}(\sqrt{t})}\le (\frac{b}{a})^{(1+2c)/c}(\frac{d}{\sqrt{t}})^{(1+c)/c},$$
		Taking the weighted integral on both sides of \eqref{ubdghk}, we conclude that
		\begin{equation}
			\sum_{i=0}^{\infty} e^{-\lambda_{i}t} \leq \int_{M} \frac{c_{23}(n,\nu) \left( \frac{b}{a} \right)^{c_{24}(n,\nu)}}{V_x(\sqrt{t})} \, d\mu(x) \leq c_{23}(n,\nu) \left( \frac{b}{a} \right)^{c_{24}(n,\nu)} \int_{M} p(t) \, d\mu(x).
		\end{equation}
		where 
		\begin{equation}
			p(t)=
			\begin{cases}
				(\frac{b}{a})^{(1+2c)/c}(\frac{d}{\sqrt{t}})^{(1+c)/c}V_{x}(d)^{-1}& \text{if } \sqrt{t}\le d, \\
				(\frac{b}{a})^{(1+2c)/c}(\frac{d}{\sqrt{t}})^{(1+c)/c}V(M)^{-1} & \text{if } \sqrt{t}\ge d,
			\end{cases}
		\end{equation} 
		which implies that $(k+1)e^{(-\lambda_{k}t)}\le c_{23}(n,\nu) \left( \frac{b}{a} \right)^{c_{24}(n,\nu)}q(t)$ for any $t>0$, that is 
		\begin{equation}\label{upbdeg}
			e^{(\lambda_{k}t)}q(t)\ge (k+1)c^{-1}_{23}(n,\nu) \left( \frac{b}{a} \right)^{-c_{24}(n,\nu)},
		\end{equation}
		where 
		\begin{equation}
			q(t)=
			\begin{cases}
				(\frac{b}{a})^{(1+2c)/c}(\frac{d}{\sqrt{t}})^{(1+c)/c}& \text{if } \sqrt{t}\le d, \\
				(\frac{b}{a})^{(1+2c)/c}(\frac{d}{\sqrt{t}})^{(1+c)/c} & \text{if } \sqrt{t}\ge d.
			\end{cases}
		\end{equation} 
		It is easy to check that when $t_{0}=\frac{n}{2\lambda_{k}}$. Plugging to \eqref{upbdeg}
		we get the lower bound for $\lambda_{k}$.
	\end{proof}
	When $K\neq 0$, following the same argument as that when $K=0$, we can still derive lower bound estimations of eigenvalues of the weighted Laplacian operator $\Delta_\phi$ as
	\begin{theorem}\label{lbeg2}
		Let $(M^n, g, \mu)$ be a compact weighted Riemannian manifold under lower $N-Ricci\ curvature$ in $\eps-range$. Assume $\phi$ satisfies 
		$$0<a\le e^{\frac{2(1-\eps)\phi(x)}{n-1}}\le b.$$
		Let $0=\lambda_{0}<\lambda_{1}\le \lambda_{2}\le...$ be eigenvalues of $\Delta_\phi$. Then following lower bound estimate of $\lambda_{k}$ holds
		\begin{equation}
			\lambda_{k}\ge \frac{C_{27}(a,b,n,\nu)}{d^{2}}(\frac{k+1}{exp(((\frac{b}{a})^{c_{24}(n,v)})\sqrt{K_{\eps}(M^n)}d)})^{2c/(c+1)}.
		\end{equation}
	\end{theorem}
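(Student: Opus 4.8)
My plan is to follow the proof of Theorem~\ref{lbeg} essentially verbatim; the only new feature when $K\neq 0$ is that the curvature-dependent exponentials in the $\phi$-heat kernel upper bound can no longer be dropped, and I will use the compactness of $M^n$ to freeze them at the scale of the diameter $d=\diam(M^n)$. First I would set $x=y$ in the upper bound of Theorem~\ref{hkestm} (equivalently, use Corollary~\ref{ubhk2} with $R$ of order $\sqrt t$) and use the trivial inequality $K_\eps(q,10\sqrt t)\le K_\eps(M^n)$ to get a diagonal estimate
\[
H_\phi(x,x,t)\le \frac{C(\eps)\,E_2'\,\exp(2D_2\sqrt{K_\eps(M^n)}\,\sqrt t)}{V_x(\sqrt t)}.
\]
Then I would expand $H_\phi(x,y,t)=\sum_{i\ge 0}e^{-\lambda_i t}\varphi_i(x)\varphi_i(y)$ in an $L^2(\mu)$-orthonormal basis $\{\varphi_i\}$ of $\Delta_\phi$-eigenfunctions and integrate over $M^n$, so that $\sum_{i\ge 0}e^{-\lambda_i t}=\int_{M^n}H_\phi(x,x,t)\,d\mu(x)$; since $\lambda_0,\dots,\lambda_k$ are all $\le\lambda_k$, the left-hand side is at least $(k+1)e^{-\lambda_k t}$.

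Next I would estimate $\int_{M^n}V_x(\sqrt t)^{-1}\,d\mu(x)$. Because $\diam(M^n)=d$ we have $B_x(2d)=M^n$, hence $V_x(2d)=V(M^n)$ for every $x$, so Corollary~\ref{col1} with $r=2d$, $s=\sqrt t$ gives $V_x(\sqrt t)^{-1}\le V(M^n)^{-1}(b/a)^{(1+2c)/c}(2d/\sqrt t)^{(1+c)/c}\exp(\sqrt{K_1/c}\cdot 2d/a)$, with $K_1=\max\{0,-K\}$; taking $K=-K_\eps(M^n)$ (a legitimate lower bound, by definition of $K_\eps$) makes $K_1=K_\eps(M^n)$. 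Integrating over $M^n$ cancels the factor $V(M^n)^{-1}$, and restricting to $t\le d^2$ lets me replace $\exp(2D_2\sqrt{K_\eps(M^n)}\sqrt t)$ by the $t$-free quantity $\exp(2D_2\sqrt{K_\eps(M^n)}\,d)$. Combining, I would arrive at, for all $0<t\le d^2$,
\[
(k+1)e^{-\lambda_k t}\ \le\ A\,\exp(\Lambda\sqrt{K_\eps(M^n)}\,d)\,(d/\sqrt t)^{(1+c)/c},
\]
where the constants $A$ and $\Lambda$ depend only on $a,b,n,\nu$ (and I may enlarge $\Lambda$, which only weakens the conclusion, so $\Lambda$ may be taken of the form $(b/a)^{c_{24}(n,\nu)}$ as in the statement).

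The final step is the optimization in $t$, done exactly as in Theorem~\ref{lbeg}: I would rewrite the last inequality as $k+1\le A\exp(\Lambda\sqrt{K_\eps(M^n)}d)\,e^{\lambda_k t}t^{-(1+c)/(2c)}d^{(1+c)/c}$, choose $t_0=\tfrac{1+c}{2c\lambda_k}$ (which lies in $(0,d^2]$ exactly when $\lambda_k\ge\tfrac{1+c}{2cd^2}$), and solve for $\lambda_k$ to get
\[
\lambda_k\ \ge\ \frac{C_{27}}{d^2}\left(\frac{k+1}{\exp(\Lambda\sqrt{K_\eps(M^n)}\,d)}\right)^{2c/(c+1)}.
\]
In the leftover regime $\lambda_k<\tfrac{1+c}{2cd^2}$, taking $t=d^2$ in the displayed inequality shows that $k+1$ is at most a constant multiple of $\exp(\Lambda\sqrt{K_\eps(M^n)}d)$, and the claimed bound then follows from the Neumann--Poincar\'e inequality (Theorem~\ref{npineq} with $R=2d$), which yields $\lambda_k\ge\lambda_1\ge(C_{NP}d^2)^{-1}$ with $C_{NP}$ of the form $c(a,b,n,\nu)\exp(\tfrac{4d}{a\sqrt c}\sqrt{K_\eps(M^n)})$; after enlarging $\Lambda$ and shrinking $C_{27}$ this is no weaker than the asserted estimate.

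I expect the only real difficulty to be the bookkeeping: controlling how the scale-$\sqrt t$ curvature exponential in the heat-kernel bound interacts with the optimization in $t$ (which is precisely why one first restricts to $t\le d^2$ to freeze it at scale $d$), and pinning down a single pair of constants $(C_{27},\Lambda)$ that works simultaneously for the ``large $\lambda_k$'' regime governed by the heat kernel and the ``small $\lambda_k$'' regime governed by the Neumann--Poincar\'e inequality.
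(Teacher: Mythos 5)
Your proposal is correct and follows essentially the same route as the paper, which proves Theorem \ref{lbeg2} by simply rerunning the $K=0$ trace argument of Theorem \ref{lbeg} (diagonal heat-kernel upper bound, eigenfunction expansion and integration over $M^n$, volume comparison with $K_1=K_\eps(M^n)$, then optimization in $t$), now carrying the curvature exponential $\exp\bigl(C\sqrt{K_\eps(M^n)}\,d\bigr)$ through. Your extra step handling the regime $t_0>d^2$ (freezing the exponential by restricting to $t\le d^2$ and falling back on the Neumann--Poincar\'e bound for small $\lambda_k$) is a careful patch of a point the paper glosses over, but it does not change the method.
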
 
	
	\section{Li-Yau's Gradient estimate for $\phi-$Heat equation}
In the previous section, we established the Gaussian upper bound for the $\phi$-heat kernel. Building on this result, we derive a \textit{Li-Yau-type} gradient estimate for the $\phi$-heat equation by adapting the techniques from \cite{ZhangZhu2018}, under appropriate $L^p$-norm assumptions on the weighted gradient $\||\nabla\phi|\|_{L^p(\mu)}$ where $p\geq n$.

Denote $Q=J(x,t)\left(\frac{|\nabla u|^2}{u^2}\right)-\alpha\left(\frac{\pa_t u}{u}\right)$ where $J(x,t)$ is some smooth function on $M$. Let $\omega=ln u$, it is clear that
\begin{equation}
	\pa_t \omega=\Delta_\phi\omega+|\nabla\omega|^2.
\end{equation}
Then by simply caculate, we have
\begin{equation}
	\begin{aligned}
		(\Delta_\phi - \partial_t)Q 
		&= 2J \Big( \operatorname{Ric}_\phi^N(\nabla w, \nabla w) + |\operatorname{Hess} w|^2 \Big) 
		+ 2J \frac{\langle \nabla \phi, \nabla w \rangle^2}{N - n} \\
		&\quad + 2\langle \nabla w, \nabla J \rangle |\nabla w|^2 
		- 2\alpha \langle \nabla w, \nabla w_t \rangle 
		- 2\langle \nabla w, \nabla Q \rangle \\
		&\quad + (\Delta_\phi J)|\nabla w|^2 
		+ 2 \langle \nabla J, \nabla|\nabla w|^2 \rangle 
		- \partial_t J|\nabla w|^2 \\
		&\quad + \alpha \partial_t(|\nabla w|^2)
	\end{aligned}
\end{equation}
Using the Cauchy-Schwarz inequality
\begin{equation}
	\begin{aligned}
		2\langle\nabla J, \nabla(|\nabla w|^2)\rangle &= 4\langle\nabla J, \text{Hess}(\nabla w, \cdot)\rangle \\
		&\geq -4|\nabla J| |\nabla w| \text{Hess} w| \\
		&\geq -\frac{4|\nabla J|^2|\nabla w|^2}{\delta J} - \delta J |\text{Hess} w|^2
	\end{aligned}
\end{equation}
and
\begin{equation}
	2\langle\nabla J,\nabla\omega \rangle\ge -\delta J |\nabla\omega|^2-\frac{|\nabla J|^2}{\delta J}.
\end{equation}

\begin{equation}
	\begin{aligned}
		|\operatorname{Hess} \omega|^2&\ge \frac{(\Delta\omega)^2}{n}\\
		&=\frac{(\Delta_{\phi}\omega+\langle \nabla \phi, \nabla w \rangle)^2}{n}\\&
		\ge \frac{(\Delta_{\phi}\omega)^{2}-2|\nabla \phi|^{2}|\nabla w|^2}{2n}.
	\end{aligned}
\end{equation}
Finally we can derive following estimate
\begin{equation}\label{Qestimate}
	\begin{aligned}
		(\Delta_\phi - \partial_t)Q 
		\geq{} & 2J \operatorname{Ric}_\phi^N(\nabla w, \nabla w) + (2 - \delta)J |\operatorname{Hess} w|^2 
		- \frac{5|\nabla J|^2}{\delta J}|\nabla w|^2 - \delta J|\nabla w|^4 \\
		& - 2\langle \nabla w, \nabla Q \rangle - 2\alpha\langle\nabla w, \nabla w_t\rangle 
		+ (\Delta_\phi J)|\nabla w|^2 - J_t|\nabla w|^2 \\
		& + \alpha \partial_t(|\nabla w|^2) + \frac{2J\langle \nabla\phi, \nabla w\rangle^2}{N - n} \\
		\geq{} & 2J \operatorname{Ric}_\phi^N(\nabla w, \nabla w) 
		+ (2 - \delta)J \left( \frac{(\Delta_\phi w)^2 - 2\langle\nabla \phi, \nabla w\rangle^2}{2n} \right) \\
		& + \frac{2J\langle\nabla\phi, \nabla w\rangle^2}{N - n} - \frac{5|\nabla J|^2}{\delta J}|\nabla w|^2 
		- \delta J|\nabla w|^4 \\
		& - 2\alpha\langle\nabla w, \nabla w_t\rangle - 2\langle\nabla w, \nabla Q\rangle 
		+ (\Delta_\phi J)|\nabla w|^2 - J_t|\nabla w|^2 + \alpha \partial_t(|\nabla w|^2) \\
		\geq{} & \left( \Delta_\phi J - \frac{5|\nabla J|^2}{\delta J} - J_t \right)|\nabla w|^2 
		+ 2J \Big( \operatorname{Ric}_\phi^N(\nabla w, \nabla w) \\
		& - C_{28}(n,N,\delta)|\nabla\phi|^2|\nabla w|^2 \Big) 
		- \delta J|\nabla w|^4 + \frac{(2 - \delta)J}{2n}(\Delta_\phi w)^2 \\
		& - 2\langle\nabla w, \nabla Q\rangle \\
		\geq{} & \left( \Delta_\phi J - \frac{5|\nabla J|^2}{\delta J} - J_t - 2JV \right)|\nabla w|^2 
		- \delta J|\nabla w|^4 \\
		& + \frac{(2 - \delta)J}{2n}(\Delta_\phi w)^2 - 2\langle\nabla w, \nabla Q\rangle
	\end{aligned}
\end{equation}
here $C_{28}(n,N,\delta)=\frac{2n+(2-\delta)(N-n)}{2n(N-n)}$ and $V(x)=max\{Ke^{\frac{4(\eps-1)\phi(x)}{n-1}}+C_{28}(n,N,\delta)|\nabla\phi|^2,0\}$.

\begin{lemma}
    Under the same setting as pervious caculates, for $\delta\in (0,2)$, $p> n$ and $V(x)=max\{Ke^{\frac{4(\eps-1)\phi(x)}{n-1}}+C_{28}(n,N,\delta)|\nabla\phi|^2,0\}$ , the problem

$\left\{
\begin{aligned}
&\Delta_\phi J - 2VJ - \frac{5}{\delta} \frac{|\nabla J|^2}{J} - \partial_t J = 0 \quad \text{on } M \times (0, \infty) \\
&J(\cdot, 0) = 1
\end{aligned}
\right.$
has a unique solution for $t \geq 0$ , which satisfies
$$\underline{J}(t) \leq J(x,t) \leq 1$$

where 
\begin{equation}
    \underline{J}(t) = 2^{-\frac{1}{\tau-1}} e^{-(\tau-1)^{\frac{n}{(2p-n)}} (4 C_{29} \hat{C}(t)^{\frac{1}{p}})^{\frac{2p}{2p-n}} t}
\end{equation} 
and
$$
\begin{aligned}
    &C_{29} = K \| e^{\frac{4(\varepsilon-1) \varphi(x)}{n-1}} \|_p + C_{28} \| |\nabla \varphi|^2 \|_p \\
&\hat{C}(t) = C(\varepsilon) E_2^{'} \exp \left(2 D_2 \sqrt{K_\varepsilon(q,10\sqrt{t})} \sqrt{t}\right).
\end{aligned}
$$

\end{lemma}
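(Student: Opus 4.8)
The plan is to linearize the quasilinear equation by a power substitution, solve the resulting linear parabolic Cauchy problem by standard theory on the compact manifold, read off the upper bound $J\le 1$ from the maximum principle, and obtain the lower bound $J\ge\underline J$ from a Duhamel iteration driven by the on-diagonal upper bound of Theorem~\ref{hkestm}. Concretely, I would set
\[
b=\frac{\delta}{5-\delta}\in\Big(0,\tfrac23\Big),\qquad \tau=\frac5\delta\ \ (\text{so }b=\tfrac1{\tau-1},\ \tau>1),\qquad J=v^{-b}.
\]
Since this choice of $b$ forces $b+1=\tfrac{5b}{\delta}$, the gradient terms cancel,
\[
\Delta_\phi J-\frac5\delta\frac{|\nabla J|^2}{J}=-b\,v^{-b-1}\Big(\Delta_\phi v+\big(b+1-\tfrac{5b}\delta\big)v^{-1}|\nabla v|^2\Big)=-b\,v^{-b-1}\Delta_\phi v,
\]
and dividing the equation for $J$ by $-b\,v^{-b-1}$ reduces the problem to the \emph{linear} Cauchy problem $\partial_t v=\Delta_\phi v+2(\tau-1)V\,v$ on $M\times(0,\infty)$, $v(\cdot,0)=1$, with $V\ge 0$ smooth and bounded on the compact $M$. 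Then $J(\cdot,0)=1\Leftrightarrow v(\cdot,0)=1$; $0<J\le 1\Leftrightarrow v\ge 1$; and $J\ge\underline J(t)$ is equivalent to $v(x,t)\le\underline J(t)^{-(\tau-1)}=2\exp\!\big((4(\tau-1)C_{29}\hat C(t)^{1/p})^{2p/(2p-n)}t\big)$.

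\textbf{Well-posedness and the bound $J\le 1$.} The linear $v$-equation has a unique global-in-time solution (the potential $2(\tau-1)V$ being bounded on compact $M$), and $v>0$ by the strong maximum principle; setting $J:=v^{-b}$ produces a solution of the original system, while any positive solution $\widetilde J$ yields $\widetilde v:=\widetilde J^{-1/b}$ solving the same linear problem, so uniqueness for $v$ transfers to $J$. For the upper bound, observe that the constant $1$ is a subsolution of the $v$-equation, $\partial_t 1-\Delta_\phi 1-2(\tau-1)V=-2(\tau-1)V\le 0$, with the same initial data; hence $v\ge 1$ and $J=v^{-b}\le 1$.

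\textbf{The lower bound $J\ge\underline J$.} By Duhamel's formula, $v(x,t)=1+2(\tau-1)\int_0^t\!\!\int_M H_\phi(x,y,t-s)V(y)v(y,s)\,d\mu(y)\,ds$. Put $\Phi(t)=\sup_{M}v(\cdot,t)$. Hölder's inequality with exponents $p,p'$ together with $\int_M H_\phi(x,\cdot,\sigma)\,d\mu=1$ (automatic on compact $M$) give $\int_M H_\phi(x,y,\sigma)V(y)\,d\mu(y)\le\|H_\phi(x,\cdot,\sigma)\|_{p'}\|V\|_p\le\|H_\phi(\cdot,\cdot,\sigma)\|_\infty^{1/p}C_{29}$, using $\|H_\phi(x,\cdot,\sigma)\|_{p'}\le\|H_\phi(x,\cdot,\sigma)\|_\infty^{1/p}$ and $\|V\|_p\le C_{29}$ (from $V\le Ke^{4(\varepsilon-1)\phi/(n-1)}+C_{28}|\nabla\phi|^2$). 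Inserting the on-diagonal bound of Theorem~\ref{hkestm}, the compact-manifold volume lower bound $V_x(\sqrt\sigma)\gtrsim\sigma^{n/2}$, and the monotonicity of $\hat C$, one gets $\|H_\phi(\cdot,\cdot,\sigma)\|_\infty\lesssim\hat C(t)\,\sigma^{-n/2}$ for $0<\sigma\le t$, whence
\[
\Phi(t)\le 1+c\,(\tau-1)C_{29}\hat C(t)^{1/p}\int_0^t(t-s)^{-n/(2p)}\Phi(s)\,ds,
\]
a weakly singular linear Volterra inequality whose exponent $n/(2p)<1$ since $p>n$. Iterating it, the $k$-th iterated kernel integrates to a constant times $\Gamma\!\big(k(1-\tfrac n{2p})+1\big)^{-1}\big(c(\tau-1)C_{29}\hat C(t)^{1/p}\Gamma(1-\tfrac n{2p})\big)^k t^{k(1-n/(2p))}$, and summing yields a Mittag--Leffler function of $t^{1-n/(2p)}$, bounded above by $2\exp\!\big((4(\tau-1)C_{29}\hat C(t)^{1/p})^{2p/(2p-n)}t\big)$ after absorbing the $\Gamma$-factors into the constant $4$ and using $(1-\tfrac n{2p})^{-1}=\tfrac{2p}{2p-n}$. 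Reverting through $J=v^{-b}$ gives precisely $J(x,t)\ge\underline J(t)$.

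\textbf{The main obstacle.} Steps one and two are essentially routine. The delicate part is the last step: turning the weakly singular Volterra iteration into the \emph{explicit} exponential bound $\underline J(t)$ with exactly the stated constants, and verifying that the heat-kernel $L^{p'}$-estimate is genuinely uniform in the base point with the claimed $\hat C(t)$-dependence — this is where the monotonicity of $\hat C$ and the compactness of $M$ are used, and where the hypothesis $p>n$ enters to keep the time singularity $(t-s)^{-n/(2p)}$ integrable.
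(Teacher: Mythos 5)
Your proposal is correct and follows essentially the same route as the paper: the substitution $J=w^{-1/(\tau-1)}$ (your $v^{-b}$) linearizes the problem into $\partial_t w=\Delta_\phi w+2(\tau-1)Vw$ with $w(\cdot,0)=1$, the bound $J\le 1$ comes from positivity/comparison for $w$, and the lower bound comes from Duhamel's formula combined with H\"older's inequality, $\|V\|_p\le C_{29}$, and the Gaussian upper bound of Theorem \ref{hkestm} giving the kernel factor $\hat C(t)^{1/p}(t-s)^{-n/(2p)}$. The only divergence is the final step: where you iterate the weakly singular Volterra inequality into a Mittag--Leffler bound and absorb the $\Gamma$-factors, the paper instead splits the time integral at $t-\xi$ with $\xi=\bigl(4(\tau-1)C_{29}\hat C(T)^{1/p}\bigr)^{-2p/(2p-n)}$, absorbs the singular near-$t$ piece into the left side with coefficient $\tfrac12$, and then applies the ordinary Gr\"onwall inequality — which is precisely how the explicit constant $4$ in $\underline J(t)$ arises, whereas your route yields the same exponential form only up to an adjusted constant.
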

\begin{proof}
Let $\tau = \frac{5}{\delta}$ and define $w$ through the substitution $J = w^{-\frac{1}{\tau-1}}$. Under this transformation, the PDE for $J$ reduces to:

\small{\[
\frac{\tau}{(\tau-1)^2} w^{-\frac{2\tau-1}{\tau-1}} |\nabla w|^2 - \frac{1}{\tau-1} w^{-\frac{\tau}{\tau-1}} \Delta_\phi w - 2V w^{-\frac{1}{\tau-1}} - \frac{5}{\delta} \frac{1}{(\tau-1)^2} w^{-\frac{2\tau-1}{\tau-1}} |\nabla w|^2 = -\frac{1}{\tau-1} w^{-\frac{\tau}{\tau-1}} w_t
\]}

This leads to the following parabolic system for $w$:
\begin{equation}
	\left\{
	\begin{aligned}
		&\Delta_\phi w + 2(\tau-1)Vw = w_t \quad \text{on } M \times (0, \infty), \\
		&w(\cdot, 0) = 1.
	\end{aligned}
	\right.
\end{equation}
The existence of solutions follows from the Lax-Milgram theorem, while uniqueness is guaranteed by the Liouville theorem established in Section~5.

Applying Duhamel's principle, we reformulate (5) as an integral equation:
\begin{equation}
	w(x,t) = 1 + 2(\tau-1) \int_0^t \int_M H_\phi(x,y,t-s) V(y)w(y,s) \, d\mu(y) \, ds
\end{equation}
with the heat kernel estimate:

\begin{equation}
\begin{aligned}
	H_\phi(x,y,t) & \leq \frac{C(\varepsilon) E_2^2 \exp\left(2D_2 \sqrt{K_\varepsilon(q,10\sqrt{t})} \sqrt{t}\right)}{\sqrt{V_x(\sqrt{t}) V_y(\sqrt{t})}} \exp\left(-\frac{d^2(x,y)}{4(1+\varepsilon)t}\right)\\
    &=\hat{C}(t)\frac{1}{\sqrt{V_x(\sqrt{t}) V_y(\sqrt{t})}}\exp\left(-\frac{d^2(x,y)}{4(1+\varepsilon)t}\right).
    \end{aligned}
\end{equation}

To estimate $w$, define 
\[
m(t) = \sup_{M \times [0,t]} w(x,s),
\]
which satisfies:
\begin{equation}
	\begin{aligned}
		w(x,t) \leq 1 & + 2(\tau-1) \int_0^{t-\xi} \int_M H_\phi(x,y,t-s) V(y)m(s) \, d\mu(y) \, ds \\
		& + 2(\tau-1) \int_{t-\xi}^t \int_M H_\phi(x,y,t-s) V(y)m(s) \, d\mu(y) \, ds.
	\end{aligned}
\end{equation}

Employing H\"older's inequality with the heat kernel estimate (Theorem \ref{hkestm}) and volume comparison theorem (Theorem \ref{VCP}), we derive:
\begin{equation}
	\begin{aligned}
		\int_M H_\phi(x,y,t-s)V(y) \, d\mu(y) 
		&\leq \left( \int_M V^p(y) \, d\mu(y) \right)^{\frac{1}{p}} \left( \int_M H_\phi^{\frac{p}{p-1}}(x,y,t-s) \, d\mu(y) \right)^{\frac{p-1}{p}} \\
		&\leq \left[ K \left\| e^{\frac{4(\varepsilon-1)\phi}{N-1}} \right\|_p + C_{28} \|\nabla\phi\|_p \right] \frac{\left[ \hat{C}(t) \right]^{\frac{1}{p}}}{(t-s)^{\frac{n}{2p}}} \\
		&= C_{29} \frac{\hat{C}(t)^{\frac{1}{p}}}{(t-s)^{\frac{n}{2p}}}.
	\end{aligned}
\end{equation}

Consequently,
\begin{equation}
	m(t) \leq 1 + 2(\tau-1)C_{29}\hat{C}(T)^{\frac{1}{p}} \int_0^{t-\xi} \frac{m(s)}{(t-s)^{\frac{n}{2p}}} \, ds + 2(\tau-1)C_{29}\hat{C}(T)^{\frac{1}{p}} m(t)\xi^{1-\frac{n}{2p}}.
\end{equation}
Rearranging terms yields:
\begin{equation}
	\begin{aligned}
		\left[1 - 2(\tau-1)C_{29}\hat{C}(T)^{\frac{1}{p}}\xi^{1-\frac{n}{2p}}\right]m(t) &\leq 1 + 2(\tau-1)C_{29}\hat{C}(T)^{\frac{1}{p}} \int_0^{t-\xi} \frac{m(s)}{(t-s)^{\frac{n}{2p}}} \, ds \\
		&\leq 1 + 2(\tau-1)C_{29}\hat{C}(T)^{\frac{1}{p}}\xi^{-\frac{n}{2p}} \int_0^{t-\xi} m(s) \, ds.
	\end{aligned}
\end{equation}

Choosing $\xi = \left(4(\tau-1)C_{29}\hat{C}(T)^{\frac{1}{p}}\right)^{-\frac{2p}{2p-n}}$ gives:
\begin{equation}
	1 - 2(\tau-1)C_{29}\hat{C}(T)^{\frac{1}{p}}\xi^{1-\frac{n}{2p}} = \frac{1}{2},
\end{equation}
leading to:
\begin{equation}
	m(t) \leq 2 + \left(4(\tau-1)C_{29}\hat{C}(T)^{\frac{1}{p}}\right)^{\frac{2p}{2p-n}} \int_0^t m(s) \, ds.
\end{equation}
Gr\"onwall's inequality then implies:
\begin{equation}
	m(t) \leq 2 \exp\left( \left(4(\tau-1)C_{29}\hat{C}(T)^{\frac{1}{p}}\right)^{\frac{2p}{2p-n}} t \right).
\end{equation}

In particular, we obtain the global bound:
\begin{equation}
	w(x,t) \leq 2 \exp\left( \left(4(\tau-1)C_{29}\hat{C}(t)^{\frac{1}{p}}\right)^{\frac{2p}{2p-n}} t \right) \quad \forall t \in [0, \infty),
\end{equation}
which translates to:
\begin{equation}
	2^{-\frac{1}{\tau-1}} e^{-(\tau-1)^{\frac{n}{2p-n}} (4C_{29}\hat{C}(t)^{\frac{1}{p}})^{\frac{2p}{2p-n}} t} \leq J(x,t) \leq 1,
\end{equation}
\end{proof}
Let $Z = \frac{2-\delta}{2n}$. Applying the previous Lemma, we obtain
\begin{equation}
	(\Delta_\phi - \partial_t)Q + 2\langle\nabla w, \nabla Q\rangle \geq ZJ(\Delta_\phi w)^2 - \delta J|\nabla w|^4.
\end{equation}

Direct computation yields the expansion:
\begin{equation}
	\begin{aligned}
		(\Delta_\phi w)^2 &= \left( \frac{Q}{\alpha} + \frac{\alpha - J}{\alpha} |\nabla w|^2 \right)^2 \\
		&= \frac{Q^2}{\alpha^2} + 2\frac{Q(\alpha - J)}{\alpha^2}|\nabla w|^2 + \left( \frac{\alpha - J}{\alpha} \right)^2 |\nabla w|^4.
	\end{aligned}
\end{equation}

Substituting this into \eqref{Qestimate}, we derive
\begin{equation}
	\begin{aligned}
		(\Delta_\phi &- \partial_t)(tQ) + 2\langle\nabla w, \nabla(tQ)\rangle \\
		&\geq t\left[ ZJ\left( \frac{Q^2}{\alpha^2} + 2\frac{Q(\alpha - J)}{\alpha^2}|\nabla w|^2 + \left( \frac{\alpha - J}{\alpha} \right)^2 |\nabla w|^4 \right) - \delta J|\nabla w|^4 \right] - Q.
	\end{aligned}
\end{equation}

Assuming $Q \geq 0$ at its maximum point, we observe
\begin{equation}
	\frac{Q(\alpha - J)}{\alpha^2} |\nabla w|^2 \geq 0.
\end{equation}
Discarding non-negative terms, we simplify to
\begin{equation}
	\begin{aligned}
		(\Delta_\phi &- \partial_t)(tQ) + 2\langle\nabla w, \nabla(tQ)\rangle \\
		&\geq t\left[ \frac{ZJQ^2}{\alpha^2} + \left( ZJ\left( \frac{\alpha - J}{\alpha} \right)^2 - \delta J \right)|\nabla w|^4 \right] - Q.
	\end{aligned}
\end{equation}

To ensure non-negativity of the $|\nabla w|^4$ coefficient, we require
\begin{equation}
	Z\left( \frac{\alpha - J}{\alpha} \right)^2 - \delta \geq 0.
\end{equation}
Using the bound $J \leq 1$, we select $\delta = \frac{2}{2n+1}$ to obtain
\begin{equation}
	(\Delta_\phi - \partial_t)(tQ) + 2\langle\nabla w, \nabla(tQ)\rangle \geq \frac{t(2-\delta)J}{2n}\frac{Q^2}{\alpha^2} - Q.
\end{equation}

At the maximum point, this implies the critical bound
\begin{equation}
	Q \leq \frac{1}{t} \cdot \frac{2n}{(2-\delta)J},
\end{equation}
which finish the proof of the Theorem \ref{LYGEWHE}.

\begin{remark}
This Li-Yau type gradient estimates under lower Bakry-\'Emery curvature condition was first prove by Y.Li in \cite{Li2015} by classical method. Moreover, for the case $N > n$, an analogous estimate can be established under the weaker assumption $\|\operatorname{Ric}_\phi^N\|_{L^p(\mu)} \leq K$ (see \cite{OlivSeto}).
\end{remark}

In \cite{Ohta2021}, Ohta proposed an open question regarding the existence of Li-Yau-type estimates for weighted Riemannian manifolds $(M^n, g, \mu)$ satisfying a lower $N$-Ricci curvature bound in the $\varepsilon$-range with \textit{negative} dimension parameter $N < 0$. Our results partially address this question by imposing a weighted $L^p(\mu)$ bound on $|\nabla\phi|^2$. This leads us to formulate the following unresolved problem:

\begin{question}
	Under what geometric conditions can one establish \textit{natural bounds} for the weighted $L^p(\mu)$-norm of $|\nabla\phi|^2$ when $p > \frac{n}{2}$?
\end{question}
\section*{Acknowledgements}

Both authors would like to thank their supervisor Prof. Meng Zhu for inspiring discussions and invaluable suggestions.


\begin{thebibliography}{99}
		\subsection{some}
		\bibitem{BakryEmery1985} D. Bakry, M. \'Emery, Diffusions hypercontractives, \textit{Séminaire de probabilités, XIX}, 1983/84, pp. 177–206, Lecture Notes in Mathematics, vol. 1123, Springer, Berlin, 1985.
		
		\bibitem{BakryQian} D. Bakry, Z.-M. Qian, Some new results on eigenvectors via dimension, diameter and Ricci curvature, \textit{Advances in Mathematics}, vol. 155, pp. 98–153, 2000.
		
		\bibitem{Chow2004}B. Chow, D. Knopf, \emph{The Ricci flow: an introduction}, Mathematical Surgery and Monographs, 110. American Mathematical Society, Providence, RI, 2004.
		
		\bibitem{Davies1989} E. B. Davies, \textit{Heat Kernels and Spectral Theory}, Cambridge Tracts in Mathematics, vol. 92, Cambridge University Press, Cambridge, 1989.
		
		\bibitem{F1} Y. Fujitani, Some functional inequalities under lower Bakry–Émery–Ricci curvature bounds with \({\varepsilon }\)-range, \textit{manuscripta math.}, vol. 175, pp. 75–95, 2024.
		
		\bibitem{Fujitani2} Y. Fujitani, Analysis of harmonic functions under lower bounds of $N-$weighted Ricci curvature with $\eps-$range, \textit{Journal of Mathematical Analysis and Applications}, vol. 542, no. 2, p. 128848, 2025.
		
		\bibitem{G1} A. Grigor’yan, Heat Kernel and Analysis on Manifolds, AMS/IP Studies in Advanced Mathematics, vol. 47. American Mathematical Society, Providence, RI; International Press, Boston, MA, 2009.
		
		\bibitem{Galloway2021} G. J. Galloway, M. A. Khuri, E. Woolgar, A Bakry-Émery almost splitting result with applications to the topology of black holes, \textit{Communications in Mathematical Physics}, vol. 384, no. 3, pp. 2067–2101, 2021.
		
		\bibitem{Hamilton1995} R. Hamilton, The formation of singularities in the Ricci flow, in \textit{Surveys in Differential Geometry}, Vol. II (Cambridge, MA, 1993), pp. 7–136, Int. Press, Cambridge, MA, 1995.
		
		\bibitem{KarpLi1982}
		L. Karp and P. Li, \emph{The heat equation on complete Riemannian manifolds}, unpublished paper, 1982.
		
		\bibitem{KL} K. Kuwae and X.-D. Li, New Laplacian comparison theorem and its applications to diffusion processes on Riemannian manifolds, \textit{Bulletin of the London Mathematical Society} \textbf{54}, no. 2 (2022): 404-427.
		
		\bibitem{Kuwae2021} K. Kuwae and Y. Sakurai, Rigidity phenomena on lower N-weighted Ricci curvature bounds with \( \epsilon \)-range for nonsymmetric Laplacian, \textit{Illinois Journal of Mathematics}, vol. 65, no. 4, pp. 847–868, 2021.
		
		\bibitem{Kuwae2022} K. Kuwae and Y. Sakurai, Comparison geometry of manifolds with boundary under lower N-weighted Ricci curvature bounds with \( \epsilon \)-range, \textit{Journal of the Mathematical Society of Japan}, vol. 1, no. 1, pp. 1–22, 2022.
		
		\bibitem{Kuwae2023} K. Kuwae and Y. Sakurai, Lower N-weighted Ricci curvature bound with \( \epsilon \)-range and displacement convexity of entropies, \textit{Journal of Topology and Analysis}, vol. 0, pp. 1–26, 2023.
		
		\bibitem{LottVillani2009} J. Lott, C. Villani, Ricci curvature for metric-measure spaces via optimal transport, \textit{Annals of Mathematics}, vol. 169, no. 3, pp. 903–991, 2009.
		
		\bibitem{Li}P. Li, Geometric analysis. Cambridge Studies in Advanced Mathematics, 134. Cambridge University Press. (2012).
		
		\bibitem{Li1984} P. Li, Uniqueness of $L^1$ solutions for the Laplace equation and the heat equation on Riemannian manifolds, \textit{Journal of Differential Geometry}, vol. 20, pp. 447–457, 1984.
		
		\bibitem{LiSchoen1984} P. Li, R. Schoen, $L^p$ and mean value properties of subharmonic functions on Riemannian manifolds, \textit{Acta Mathematica}, 153 (1984), no. 3-4, 279-301.
		
		\bibitem{LiYau1986} P. Li, S.-T. Yau, On the parabolic kernel of the Schrödinger operator, \textit{Acta Mathematica}, vol. 156, pp. 153–201, 1986.
		
		\bibitem{Li2} X.-D. Li, Liouville theorems for symmetric diffusion operators on complete Riemannian manifolds, \textit{Journal of Mathematics Pure and Applied}, vol. 84, pp. 1295–1361, 2005.
		\bibitem{Li2015} Y. Li, Li-Yau-Hamilton estimates and Bakry-\'Emery-Ricci curvature, \textit{Nonlinear Anal.} 113 (2015), 1--32, Elsevier, Amsterdam.
        
		\bibitem{LuMingOhta} Y. Lu, E. Minguzzi, and S.-i. Ohta, Geometry of weighted Lorentz-Finsler manifolds I: Singularity theorems, \textit{Journal of the London Mathematical Society} \textbf{104}, no. 1 (2021): 362-393.
		
		\bibitem{Lu-Ohta} Y. Lu, E. Minguzzi, and S.-i. Ohta, Comparison theorems on weighted Finsler manifolds and spacetimes with $\eps$-range, \textit{Analysis and Geometry in Metric Spaces}, vol. 10, no. 1, pp. 1-30, 2022.
		
		\bibitem{MW} O. Munteanu and J. Wang, Geometry of manifolds with densities, \emph{Advances in Math}. 259 (2014), 269–305.
		
		\bibitem{Ohta2016}
		S.-i. Ohta, 
		\emph{(K,N)-convexity and the curvature-dimension condition for negative N}, 
		The Journal of Geometric Analysis \textbf{26}, no. 3 (2016): 2067--2096.
		
			\bibitem{Ohta2021}S. Ohta, {\it Comparison Finsler geometry}, Springer Monographs in Mathematics, Springer, Cham, [2021] \copyright 2021;
		
		\bibitem{OlivSeto} X. R. Olivé, S. Seto, Gradient estimates of a nonlinear parabolic equation under integral Bakry-Émery Ricci condition, \textit{Differential Geometry and its Applications}, vol. 98, pp. 102222, 2025.
		
		\bibitem{Pel1} G. Perelmann, The entropy formula for the Ricci flow and its geometric applications. arXiv:math.DG/0211159. (2002).
		
		\bibitem{sal1} L. Saloff-Coste, Uniformly elliptic operators on Riemannian manifolds. \emph{J. Differential Geom.} 36, 417–450 (1992)
		
		\bibitem{sal2} L. Saloff-Coste, Aspects of Sobolev-Type Inequalities. London Mathematical Society Lecture Note Series, vol. 289. Cambridge University Press, Cambridge (2002)
		
		\bibitem{Sturm1} K.-T. Sturm, On the geometry of metric measure spaces. I, \textit{Acta Math.}, vol. 196, no. 1, pp. 65-131, 2006.
		
		\bibitem{Sturm2} K.-T. Sturm, On the geometry of metric measure spaces. II, \textit{Acta Math.}, vol. 196, no. 1, pp. 133-177, 2006.
		
		\bibitem{SMA} S. Pigola, M. Rigoli, and A. G. Setti, Vanishing theorems on Riemannian manifolds, and geometric applications, \textit{Journal of Functional Analysis}, vol. 229, no. 2, pp. 424-461, 2005.
		
		\bibitem{Song2023} X. Song, L. Wu, M. Zhu, Heat kernel estimate for the Laplace-Beltrami operator under Bakry-Émery Ricci curvature condition and applications, \textit{Journal of Geometry and Physics}, vol. 194, Paper No. 104997, 28, 2023.
		
		\bibitem{Wu2024} L. Wu, X. Song, M. Zhu, Eigenvalue estimates for Beltrami-Laplacian under Bakry-Émery Ricci curvature condition, \textit{Potential Analysis}, vol. 60, no. 2, pp. 597–614, 2024;
		
		\bibitem{Wei2009} G.-F. Wei, W. Wylie, Comparison geometry for the Bakry–Émery Ricci tensor, \textit{Journal of Differential Geometry}, vol. 83, pp. 377–405, 2009.
		
		\bibitem{Wu} J.-Y. Wu, $L^p$-Liouville theorems on complete smooth metric measure spaces,'' \textit{Bulletin des Sciences Mathématiques}, vol. 138, no. 4, pp. 510-539, 2014.
		
		\bibitem{WW1}J.-Y. Wu and P. Wu, Heat kernel on smooth metric measure spaces with nonnegative curvature, \emph{Math. Ann.} 362 (2015), no. 3-4, 717-742.
		
		\bibitem{WW2} J.-Y. Wu and P. Wu, Heat kernel on smooth metric measure spaces and applications, \emph{Math. Ann.} 365 (2016), no. 1-2, 309-344.
		
		\bibitem{YW} W. Wylie and D. Yeroshkin, On the geometry of Riemannian manifolds with density, arXiv preprint arXiv:1602.08000, (2016).
		
		\bibitem{ZZ} Q. S. Zhang, M. Zhu, New volume comparison results and applications to degeneration of Riemannian metrics. \emph{Adv. Math.}, 352, 1096-1154. (2019).
		
		\bibitem{ZhangZhu2018} Q.S. Zhang, M. Zhu, Li-Yau gradient bounds on compact manifolds under nearly optimal curvature conditions, \textit{J. Funct. Anal.}, vol. 275 (2), pp. 478–515, 2018.
		
		\bibitem{ZhangZhu2019} Q. S. Zhang, M. Zhu, Bounds on harmonic radius and limits of manifolds with bounded Bakry-Emery Ricci curvature, \textit{Journal of Geometric Analysis}, vol. 29, no. 3, pp. 2082–2123, 2019.
		
	\end{thebibliography}
\end{document}